\title{Jeu de taquin and a monodromy problem for Wronskians of polynomials}
\author{Kevin Purbhoo%
\footnote{Department of Combinatorics and Optimization, University of Waterloo,
Waterloo, Ontario, Canada; {\tt kpurbhoo@math.uwaterloo.ca}.}
}
\newcommand{\CC}{\mathbb{C}}
\newcommand{\FF}{\mathbb{F}}
\newcommand{\ZZ}{\mathbb{Z}}
\newcommand{\QQ}{\mathbb{Q}}
\newcommand{\RR}{\mathbb{R}}
\newcommand{\CP}{\mathbb{CP}}
\newcommand{\FP}{\mathbb{FP}}
\newcommand{\RP}{\mathbb{RP}}
\newcommand{\PP}{\mathbb{P}}
\newcommand{\puiseux}[1]{\CC\{\!\{#1\}\!\}}
\newcommand{\psK}{\mathcal{K}}
\newcommand{\psX}{\mathcal{X}}
\newcommand{\poln}{\CC_{n-1}[z]}
\newcommand{\Rpoln}{\RR_{n-1}[z]}
\newcommand{\Kpoln}{\psK_{n-1}[z]}
\newcommand{\pol}[1]{\CC_{#1}[z]}
\newcommand{\Rpol}[1]{\RR_{#1}[z]}
\newcommand{\Kpol}[1]{\psK_{#1}[z]}
\newcommand{\Fpol}[1]{\FF_{#1}[z]}
\newcommand{\SL}{\mathrm{SL}}
\newcommand{\Wr}{\mathrm{Wr}}
\newcommand{\Gr}{\mathrm{Gr}}
\newcommand{\Rect}{{\mathchoice%
{\raisebox{.05ex}{$\sqsubset\!\!\sqsupset$}}
{\raisebox{.05ex}{$\sqsubset\!\!\sqsupset$}}
{\sqsubset\!\!\sqsupset}
{\sqsubset\!\!\sqsupset}
}}
\newcommand{\lowBox}{{\mathchoice%
{{\raisebox{-0.2ex}{$\Box$}}}
{{\raisebox{-0.2ex}{$\Box$}}}
{\Box}
{\Box}
}}
\newcommand{\ordSYT}{\mathsf{SYT}}
\newcommand{\SYT}{\mathrm{SYT}}
\newcommand{\DIT}{\mathrm{DIT}}
\newcommand{\bolda}{{\bf a}}
\newcommand{\boldb}{{\bf b}}
\newcommand{\boldc}{{\bf c}}
\newcommand{\boldp}{{\bf p}}
\newcommand{\boldw}{{\bf w}}
\newcommand{\ordT}{\mathsf{T}}
\newcommand{\derz}{{\textstyle \frac{d}{dz}}}
\newcommand{\smallidmatrix}
{\left(\begin{smallmatrix} 1 & 0 \\ 0 & 1\end{smallmatrix}\right)}
\newcommand{\Proj}{\mathop{\mathrm{Proj}}}
\newcommand{\Spec}{\mathop{\mathrm{Spec}}}
\newcommand{\sgn}{\mathrm{sgn}}
\newcommand{\ord}{\mathrm{ord}}
\newcommand{\slide}{\mathrm{slide}}
\newcommand{\rectify}{\mathrm{rect}}
\newcommand{\val}{\mathrm{val}}
\newcommand{\leadterm}{\text{\rm \footnotesize LT}}
\newcommand{\leadcoeff}{\text{\rm \footnotesize LC}}
\newcommand{\initial}{\mathrm{In}}
\newcommand{\weight}{\mathrm{wt}}
\newcommand{\I}{{\rm (I)}\xspace}
\newcommand{\II}{{\rm (II)}\xspace}
\newcommand{\III}{{\rm (III)}\xspace}
\newcommand{\spc}[1]{{\mspace{2mu}#1\mspace{2mu}}}
\newenvironment{alignqed}
  {\begin{displaymath}\begin{aligned}[b]}
  {\end{aligned}\qedhere\end{displaymath}}
\newtheorem{lemma}{Lemma}[section]
\newtheorem{theorem}[lemma]{Theorem}
\newtheorem{example}[lemma]{Example}
\newtheorem{definition}[lemma]{Definition}
\newtheorem{remark}[lemma]{Remark}
\newtheorem{proposition}[lemma]{Proposition}
\newtheorem{corollary}[lemma]{Corollary}
\numberwithin{equation}{section}
\numberwithin{figure}{section}
\definecolor{DarkBlue}{rgb}{0, 0.1, 0.55}
\definecolor{DarkRed}{rgb}{0.45, 0, 0}
\newcommand{\bfdef}[1]{{\bf \color{DarkBlue} \emph{#1}}}
\newcommand{\pcolor}[1]{{\color{DarkBlue}#1}}
\newcommand{\ncolor}[1]{{\color{DarkRed}#1}} 
\newcommand{\te}[1]%
{
 \begin{picture}(25,25)(3,0)
 \put(0,0){\includegraphics{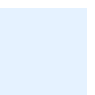}}
 \put(0,9){\makebox[25pt]{\pcolor{#1}}}
 \end{picture}
}
\newcommand{\TE}[1]%
{
 \begin{picture}(25,25)(3,0)
 \put(0,0){\includegraphics{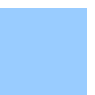}}
 \put(0,9){\makebox[25pt]{\pcolor{\bf{#1}}}}
 \end{picture}
}
\newcommand{\ue}[1]%
{
 \begin{picture}(25,25)(3,0)
 \ifthenelse{\boolean{blackandwhiteversion}}{}
 {\put(0,0){\includegraphics{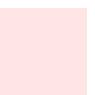}}}
 \put(0,9){\makebox[25pt]{\ncolor{#1}}}
 \end{picture}
}
\newcommand{\be}[1]%
{
 \begin{picture}(25,25)(3,0)
 \put(0,0){\includegraphics{lightblueshadebox.eps}}
 \put(0,9){\makebox[25pt]{#1}}
 \end{picture}
}
\begin{document}
\maketitle

\begin{abstract}
The Wronskian associates to $d$ linearly independent
polynomials of degree at most $n$, a non-zero 
polynomial of degree at most $d(n{-}d)$.  This can be viewed as
giving a flat, finite morphism from the Grassmannian $\Gr(d,n)$ 
to projective space of the same dimension.  
In this paper, we study the monodromy groupoid of this
map.  When the roots of the Wronskian are real, we show that
the monodromy is combinatorially 
encoded by Sch\"utzenberger's jeu de taquin;  hence we obtain new 
geometric
interpretations and proofs of a number of results from jeu de taquin 
theory, including the Littlewood-Richardson rule.
\end{abstract}


\section{Introduction}
\subsection{The Wronski map}

For any non-negative integer $m$,
let $\Fpol{m}$ denote the $(m{+}1)$-dimensional 
vector space of polynomials of degree at most $m$ over a field $\FF$:
$$\Fpol{m} := \{f(z) \in \FF[z] \mid \deg f(z) \leq m\}\,.  $$
Throughout, we fix integers $0<d<n$.  Let $X := \Gr_d(\poln)$ be
the Grassmannian whose points represent $d$-dimensional linear subspaces 
of $\poln$.  Let $N := d(n{-}d) = \dim X$ be its dimension.

Given polynomials
$f_1(z), \dots, f_d(z) \in \poln$, the Wronskian
$$\Wr_{f_1, \dots, f_d}(z) :=
\begin{vmatrix}
f_1(z) & \cdots & f_d(z)\\
f_1'(z) & \cdots  & f_d'(z) \\
\vdots &  \vdots & \vdots \\
f_1^{(d-1)}(z) & \cdots & f_d^{(d-1)}(z)
\end{vmatrix} $$
is a polynomial of degree at most $N$.
If $f_1, \dots, f_d$ are 
linearly dependent, the Wronskian is zero; otherwise up to a constant 
multiple, $\Wr_{f_1, \dots, f_d}(z)$ depends only on the linear span 
$\langle f_1(z), \dots, f_d(z) \rangle \subset \poln$.  Thus the
Wronskian gives a well defined morphism of schemes 
$\Wr: X \to \PP(\pol{N})$, called the \bfdef{Wronski map}.  For 
$x \in X$ we write $\Wr(x;z)$ for any representative of $\Wr(x)$ in
$\pol{N}$.

This morphism turns out to be extremely well behaved.
It appears in algebraic geometry in a number of different guises.
In the context of enumerating rational curves with prescribed 
ramifications,
Eisenbud and Harris proved the following theorem \cite{EH}:
\begin{theorem}
\label{thm:flatfinite}
$\Wr: X \to \PP(\pol{N})$ is a flat, finite morphism of schemes.
\end{theorem}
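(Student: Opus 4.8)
The plan is to reduce Theorem~\ref{thm:flatfinite} to a single geometric statement --- that every fibre of $\Wr$ is finite --- and to derive everything else by formal arguments. Since $X = \Gr_d(\poln)$ is projective and $\PP(\pol{N})$ is separated, $\Wr$ is proper; a proper morphism with finite fibres is finite (by Zariski's Main Theorem), and its image is then a closed subvariety of $\PP(\pol{N})$. As $X$ is irreducible of dimension $N$ and the fibres are zero-dimensional, this image has dimension $N$, hence equals all of $\PP(\pol{N})$. Finally, $X$ and $\PP(\pol{N})$ are smooth of the same dimension $N$, so the source is Cohen--Macaulay and the target regular; by ``miracle flatness'' a morphism between such schemes all of whose (nonempty) fibres have the expected dimension $N-N=0$ is automatically flat. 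So the whole theorem comes down to showing that $\Wr$ has finite fibres.

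For that step I would first write the Wronskian in Pl\"ucker coordinates. Expanding $f_i(z)=\sum_{a=0}^{n-1}c_{ia}z^a$ and applying the Cauchy--Binet formula to the Wronskian matrix $\bigl(f_i^{(j-1)}(z)\bigr)_{1\le i,j\le d}$ yields
\begin{displaymath}
  \Wr_{f_1,\dots,f_d}(z)\;=\;\sum_{S}\kappa_S\,p_S\,z^{w(S)},
\end{displaymath}
where $S$ ranges over the $d$-element subsets of $\{0,1,\dots,n-1\}$, $p_S=p_S(f_1,\dots,f_d)$ is the associated Pl\"ucker coordinate, the exponent $w(S)=\sum_{a\in S}a-\binom{d}{2}$ runs through every value in $\{0,1,\dots,N\}$ as $S$ varies, and $\kappa_S=\prod_{a<a'\in S}(a'-a)\neq 0$ (this constant is just the value of the Wronskian of the monomials $\{z^a:a\in S\}$, which is a single monomial). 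Collecting terms according to the power of $z$, each coefficient of $\Wr$ is therefore a nonzero linear form in the Pl\"ucker coordinates, so $\Wr$ factors as the Pl\"ucker embedding $X\hookrightarrow\PP(\bigwedge^{d}\poln)$ followed by a linear projection. That projection is defined at every point of $X$, since any $d$-dimensional subspace of $\poln$ consists of linearly independent polynomials and hence has nonvanishing Wronskian; consequently $\Wr^{*}\mathcal{O}_{\PP(\pol{N})}(1)\cong\mathcal{O}_X(1)$, the very ample Pl\"ucker line bundle.

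Finiteness of the fibres now follows quickly: were some fibre to contain an irreducible curve $C$, then $\Wr$ would collapse $C$ to a point, so $\deg\!\bigl(\mathcal{O}_X(1)|_C\bigr)=\deg\!\bigl(\Wr^{*}\mathcal{O}(1)|_C\bigr)=0$, contradicting the fact that an ample line bundle has positive degree on every curve. I expect this finiteness --- and not the formal reductions around it --- to be the only real content. The argument above is the slickest one I know; the more hands-on alternative, and the one in the spirit of the combinatorics developed later, is that for a fixed nonzero $g$ the condition $\Wr(V)=[g]$ forces $V$ into an intersection of Schubert varieties relative to the flags osculating the curve $z\mapsto(1:z:\dots:z^{n-1})$ at the roots of $g$ (and at $\infty$), of codimensions summing to $N$, after which one checks that this particular intersection is zero-dimensional. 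Everything else --- properness, quasi-finiteness forcing finiteness, surjectivity, and miracle flatness --- is routine.
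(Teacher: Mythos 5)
Your proof is correct, and the overall architecture matches the paper's: both reduce everything to zero-dimensionality of the fibres, then conclude finiteness from properness (Zariski's Main Theorem, or "projective + quasi-finite"), and flatness from the finite-morphism-between-smooth-varieties-of-equal-dimension criterion (your "miracle flatness" is exactly the paper's citation of Hartshorne, III Exer. 9.3(a)). The one place where you take a slightly different path is the finite-fibre step. The paper phrases it through its Theorem~\ref{thm:svroots}: a fibre $X(\bolda)$ is disjoint from $X_\lowBox(a_0)$ for $a_0 \notin \bolda$, whereas any positive-dimensional subvariety $Y$ satisfies $[Y]\cdot[X_\lowBox]\neq 0$ because $[Y]$ is a nonnegative combination of Schubert classes; hence $\dim X(\bolda)=0$. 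You instead observe directly from the Cauchy--Binet expansion (the paper's Proposition~\ref{prop:wrplucker}) that $\Wr$ is the Pl\"ucker embedding followed by a linear projection whose centre misses $X$, so $\Wr^{*}\mathcal{O}(1)\cong\mathcal{O}_X(1)$ is ample and no curve can be contracted. These are really the same observation --- $X_\lowBox(a)$ is a hyperplane section in the Pl\"ucker embedding, so "$[Y]\cdot[X_\lowBox]\neq 0$ for positive-dimensional $Y$" is the intersection-theoretic restatement of the ampleness of $\mathcal{O}_X(1)$ --- but your version is arguably the more self-contained one, since it does not invoke positivity of the Schubert basis or the characterization of ramification loci in terms of Schubert conditions. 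The paper's route is natural in context only because Theorem~\ref{thm:svroots} is established anyway and is needed throughout the rest of the argument.
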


A point $x \in X$ is \bfdef{real} if the subspace of
$\poln$ represented by $x$ has a basis 
$f_1(z), \dots, f_d(z) \in \Rpoln$.
In 1995,
B. Shapiro and M. Shapiro made a remarkable conjecture concerning 
the reality of the fibres of $\Wr(x;z)$, which has been a source of
inspiration for much of the work relating to the Wronski map.
The conjecture (as refined by Sottile \cite{Sot2}) has
two parts, the first of which is given below and was proved in two
papers
by Mukhin, Tarasov and Varchenko \cite{MTV1, MTV2} (see also~\cite{GHY}).

\begin{theorem}
\label{thm:ssconj}
Let $g(z) \in \Rpol{N}$ be a polynomial with $N$ distinct real roots.
Then the fibre $\Wr^{-1}(g(z))$ is reduced and every point in 
the fibre is real.
\end{theorem}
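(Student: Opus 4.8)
The plan is to follow the strategy of Mukhin, Tarasov and Varchenko and recast the statement as a question about a commutative algebra of self-adjoint operators on a Euclidean space. Write $g(z)=\prod_{i=1}^{N}(z-z_i)$ with the roots $z_i$ distinct and real. First I would describe the scheme-theoretic fibre concretely: a point $x\in X$ is the kernel of a monic linear differential operator $D_x$ of order $d$ with rational coefficients, and the condition $\Wr(x;z)=g(z)$ translates into a system of polynomial equations on the coefficients of $D_x$ (equivalently on the Pl\"ucker coordinates of $x$). Let $A_g$ be the coordinate ring of $\Wr^{-1}(g(z))$. By Theorem~\ref{thm:flatfinite} it is a finite-dimensional $\CC$-algebra of dimension $\deg\Wr$, and it carries a real structure inherited from that on $X$; the theorem is exactly the assertion that $A_g$ is reduced and that every point of $\Spec A_g$ is a real point of $X$, i.e.\ that the real form of $A_g$ is isomorphic to $\RR^{\deg\Wr}$.

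Second --- and this is the crux --- I would identify $A_g$ with a \emph{Bethe algebra}. To the points $z_1,\dots,z_N$ one attaches a finite-dimensional vector space $L$ --- a weight subspace of singular vectors in a tensor product of irreducible $\mathfrak{gl}$-modules, of dimension exactly $\deg\Wr$ (keeping this finite-dimensional is where a skew Howe $(\mathfrak{gl},\mathfrak{gl})$-duality enters) --- on which acts a commuting family of Gaudin-type operators; let $\mathcal B\subseteq\mathrm{End}(L)$ be the commutative subalgebra they generate, described concretely via a universal monic differential operator with operator-valued coefficients and prescribed singularities at the $z_i$. The key technical point is that $A_g\cong\mathcal B$ and that $L$ is a free rank-one $\mathcal B$-module, i.e.\ the Bethe ansatz is complete; matching $D_x$ against the universal operator gives the map, and one must show it is an isomorphism onto. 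Establishing this --- via the structure of the universal operator together with dimension counts using the classical and quantum Wronskians --- is where essentially all the work lies, and I expect it to be the main obstacle.

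Third, with the identification in hand, I would use that the $z_i$ are \emph{real}. For real $z_i$ the generators of $\mathcal B$ are self-adjoint with respect to the tensor product of the Shapovalov (contravariant) forms on the factors of $L$, and this form is positive definite because each factor is a finite-dimensional irreducible module. A commuting family of self-adjoint operators on a finite-dimensional Euclidean space is simultaneously orthogonally diagonalizable, with real eigenvalues and real eigenprojections; hence the commutative algebra $\mathcal B$ they generate is semisimple, with real form a product of copies of $\RR$. Transporting this along the isomorphism $A_g\cong\mathcal B$ shows that the real form of $A_g$ is $\RR^{\deg\Wr}$; equivalently $A_g$ is reduced and all $\deg\Wr$ points of $\Wr^{-1}(g(z))$ are real. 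This is the theorem.

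Finally, two points need genuine care rather than routine bookkeeping. One must check that the real structure on $L$ witnessing positivity of the Shapovalov form corresponds, under the duality and under the dictionary between $d$-planes and differential operators, to the real structure on $X$ coming from $\Rpoln$ --- so that ``real point of $\Spec A_g$'' really does mean ``$x$ has a basis in $\Rpoln$.'' And one must verify that the identification $A_g\cong\mathcal B$ is an isomorphism of algebras, not merely a bijection of point sets, since that is what upgrades the conclusion from ``the fibre is real'' to ``the fibre is reduced.'' (In the special case $d=2$ there is an independent, and genuinely different, complex-analytic proof due to Eremenko and Gabrielov, using nets and the argument principle; it does not appear to extend to general $d$.)
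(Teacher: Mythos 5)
The paper does not prove this theorem; it cites it from Mukhin--Tarasov--Varchenko \cite{MTV1, MTV2}, which is exactly the route your sketch describes. Your outline of the Bethe-ansatz/Gaudin-model strategy is a fair high-level account of that argument: realize the coordinate ring of the fibre as the image of a commuting family of Gaudin Hamiltonians on a finite-dimensional space of singular vectors, observe that for real $z_i$ these operators are symmetric with respect to the positive-definite tensor Shapovalov form, and deduce simultaneous orthogonal diagonalizability --- hence semisimplicity of the algebra, reality of the eigenvalues, and therefore reducedness and reality of the fibre. You are also right to flag the hard part, namely the completeness of the Bethe ansatz: one needs the map from the fibre ring to the Bethe algebra to be an isomorphism of algebras with $L$ free of rank one, and it must be an algebra isomorphism (not merely a point-set bijection) to upgrade ``all points real'' to ``fibre reduced.''

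Two caveats, neither of which undermines the outline. First, the division of labour between the two MTV papers is not quite as you describe: \cite{MTV1} already proves the present statement (distinct real roots) via the Gaudin model and selfadjointness with respect to the Shapovalov form, whereas the $(\mathfrak{gl},\mathfrak{gl})$-duality and the universal operator-valued differential operator belong to \cite{MTV2}, whose point is the stronger reducedness assertion for \emph{repeated} real roots discussed in Remark~\ref{rmk:ssconjparttwo}; moreover the duality is not what keeps $L$ finite-dimensional --- that is automatic for a weight space of singular vectors in a finite tensor product of finite-dimensional modules. Second, the paper itself observes that once reducedness is known, reality follows by a comparatively simple argument of Sottile \cite{Sot1}, so one can equally well present the proof as ``reducedness first, reality second'' rather than extracting both at once from orthogonal diagonalizability. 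As with any blind sketch of a theorem whose proof fills two substantial papers, the admitted gaps --- completeness of the Bethe ansatz and compatibility of real structures under the $d$-plane/differential-operator dictionary --- are where essentially all the mathematics lives.
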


Although the reality of the fibres is prominent in their proof,
the more pertinent fact for us is that these fibres are reduced;
the reality statement is a relatively simple consequence of 
this~\cite{Sot1}.
The second part
of the Shapiro-Shapiro conjecture concerns the multiplicities of the 
fibre when the roots of $g(z)$ are real but not distinct 
(see Remark~\ref{rmk:ssconjparttwo}).

\bigskip 

\noindent
In this paper, we study the monodromy groupoid of the Wronski map over the 
base of points where the fibre is reduced.  Specifically we will be
looking at a subgroupoid, which describes the lifting of certain
interesting paths and loops.
Our main goal is to show that these liftings are fundamentally related to 
Sch\"utzenberger's jeu de taquin~\cite{Sch}.  Through this relationship, 
we will see that much of the combinatorial
structure in jeu de taquin theory can be attributed to the 
geometric structure of the Wronski map.

\subsection{Outline of paper}

It is a classical result, originating with work of Castelnuovo~\cite{Cas}, 
that the fibres of the Wronski map can be 
interpreted as intersections of Schubert varieties.
We review this and other relevant background material
in Section~\ref{sec:background}.
From this interpretation, one can see that the degree
of the map $\Wr$ 
is given by counting standard Young tableaux whose shape is a
$d \times (n{-}d)$ rectangle, a calculation which dates 
back to Schubert~\cite{Schubert}.
We denote the set of all such tableaux by
$\ordSYT(\Rect)$.

Eremenko and Gabrielov~\cite{EG} showed that for
suitable base points in $\PP(\pol{N})$, there is in fact a natural 
way to index the points in the fibre of
$\Wr$ by $\ordSYT(\Rect)$.  Using Theorem~\ref{thm:ssconj}, the
notion of a suitable base point can be extended to any polynomial
with $N$ distinct real roots.   We will give a generalized and 
more explicit reformulation of this correspondence,
which will allow us to describe the monodromy for certain loops and 
paths in $\PP(\pol{N})$ in terms of tableaux.
To facilitate such a description, it will be helpful to modify our 
notion of standard Young tableau slightly, to allow entries in a 
field $\FF$ with a norm. 
As explained in Section~\ref{sec:jdt}, these enhancements allow us 
to speak of paths of tableaux, which, when $\FF = \RR$, can
be viewed as a mild extension of jeu de taquin.

In Section~\ref{sec:labelling}, we state and establish our formulation of
the correspondence.  Briefly, this works as follows: the Pl\"ucker 
coordinates of a point $x \in X$ are described in terms a tableau 
whose entries are the roots of $\Wr(x; -z)$.  If we work over the field 
of Puiseux
series $\puiseux{u}$, the tableau tells us the leading 
terms of the Pl\"ucker coordinates; over the complex numbers, 
this becomes an approximation.
Our approach is related to the types of arguments found 
in~\cite{EG, Sot1}, 
in that it can be interpreted as an asymptotic analysis 
over the real or complex numbers.

Using this correspondence, we can identify certain paths of
tableaux with paths in $X$. 
The most important example of this directly relates 
the monodromy problem 
to jeu de taquin theory.  We will show that for paths in 
$\PP(\pol{N})$ of polynomials whose roots are all real, 
the monodromy of $\Wr$ is described (in the
sense outlined in Section~\ref{sec:jdt}) by a sequence of
Sch\"utzenberger slides.  This result is formulated in
Section~\ref{sec:definesliding}, and proved in 
Section~\ref{sec:monodromy}.

A secondary example, also discussed in Section~\ref{sec:monodromy},
is the following.
For any positive integers $k,L$ such that
$1 \leq k < N$, and $L \geq 2$,
we can
define a permutation $s_{k,L}: \ordSYT(\Rect) \to \ordSYT(\Rect)$, 
as follows.  
For $\ordT \in \ordSYT(\Rect)$, $s_{k,L}(\ordT)$ is the tableau
obtained by swapping entries $k$ and $k{+}1$ in $\ordT$, if the total of
the horizontal and vertical distance between $k$ and $k{+}1$ equals
$L$; otherwise $s_{k,L}(\ordT) = \ordT$.
We will show that there exist loops in $\PP(\pol{N})$, such that
the monodromy of $\Wr$ is given by $s_{k,L}$.

These two results allow us, in Section~\ref{sec:LRrule},
to give geometric interpretations and proofs of a number 
of combinatorial theorems
involving jeu de taquin.
Among these is the Littlewood-Richardson rule.  
Our geometric interpretation of the Littlewood-Richardson rule
is notably different from those of Vakil~\cite{Vak} and 
Coskun~\cite{Cos}:
whereas their approaches involve degenerations of an 
intersection of two Schubert varieties, we begin by considering 
a general fibre of the Wronski map, which can be regarded as an 
intersection of $N$ Schubert varieties, and degenerating to
a special fibre, supported on a union 
of intersections of Schubert varieties
(cf. \eqref{eqn:unionofschuberts}).  We deduce the 
Littlewood-Richardson rule by showing that the combinatorics keeps track of 
multiplicities in each individual intersection of Schubert 
varieties comprising this union.

\subsection{Acknowledgements}
We thank Frank Sottile, Ravi Vakil, Alexander Varchenko, and 
Soroosh Yazdani for helpful
discussions, and Ian Goulden for comments on the manuscript.  This 
research was partially 
supported by an NSERC discovery grant.

\section{Background on the Wronski map}
\label{sec:background}

\subsection{Roots of the Wronskian and $\SL_2(\CC)$-action}

If $\bolda$ is a multiset and $S$ is a set, we say $\bolda$ is
a \bfdef{multisubset} of $S$ and write $\bolda \Subset S$ if every element 
of $\bolda$ is an element of $S$.  We write $\bolda \subset S$ if
every element of $\bolda$ has multiplicity $1$, i.e. $\bolda$ is a set.

As is suggested by Theorem~\ref{thm:ssconj},
it will be convenient to regard $\Wr(x;z)$ in terms of the multiset
of its roots.
If the degree of $\Wr(x;z)$ is strictly less than $N$, we will think 
of $\Wr(x;z)$ as having $N - \deg \Wr(x;z)$ roots at infinity.
If $\Wr(x;z) = \prod_{i=1}^k (z+a_i)$, let 
$\pi(x) := \{a_1, \dots, a_N\} \Subset \CP^1$, viewed as a multiset,
where $a_{k+1} = \dots = a_N = \infty$ if $k<N$.  Thus $\pi(x)$
is the multiset of roots of $\Wr(x; -z)$.

The group $\SL_2(\CC)$ acts on 
everything.  If 
$\phi = 
\left(\begin{smallmatrix}
\phi_{11} & \phi_{12} \\ \phi_{21} & \phi_{22}
\end{smallmatrix}\right) \in \SL_2(\CC)$,
we have the usual action on $\CP^1$,
$$\phi(w) := \frac{\phi_{11} w + \phi_{12}}{\phi_{21} w + \phi_{22}}$$
for $w \in \CP^1$, and hence an action on multisubsets of $\CP^1$.
On $\pol{m}$, we define the action as follows:
$$\phi f(z) := (\phi_{21} z + \phi_{11})^m 
f\big(\frac{\phi_{22} z + \phi_{12}}{\phi_{21} z + \phi_{11}}\big)$$
for $f(z) \in \pol{m}$.
The action on $\poln$ induces an action on $X$.  With these
definitions, the following 
proposition is straightforward to check.

\begin{proposition}
\label{prop:sl2equivariant}
For $\phi \in \SL_2(\CC)$ and $x \in X$ we have,
$\phi(\pi(x)) = \pi(\phi(x))$.
\end{proposition}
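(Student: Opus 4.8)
The plan is to reduce the statement to two intermediate facts. The first is that the Wronski map is itself $\SL_2(\CC)$-equivariant: for every $\phi\in\SL_2(\CC)$ and $x\in X$ one has $\Wr(\phi(x);z) = \phi\big(\Wr(x;z)\big)$ up to a nonzero scalar, where $\phi$ acts on $\pol{N}$ as defined above. The second is that the assignment taking a nonzero $g(z)\in\pol{N}$ to the $N$-element multisubset of $\CP^1$ given by the roots of $g(-z)$ (counting roots at $\infty$ for any drop in degree) intertwines the $\SL_2(\CC)$-action on $\PP(\pol{N})$ with the action on multisubsets of $\CP^1$. Granting both, $\pi(\phi(x))$ is the root multiset of $\phi\big(\Wr(x;z)\big)$ read at $-z$, which by the second fact is $\phi$ applied to the root multiset of $\Wr(x;z)$ read at $-z$, i.e. $\phi(\pi(x))$.

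For the first fact, fix a basis $f_1,\dots,f_d\in\poln$ of the subspace represented by $x$; then $\phi f_1,\dots,\phi f_d$ is a basis of $\phi(x)$, and it suffices to compute $\Wr_{\phi f_1,\dots,\phi f_d}(z)$. Write $\psi(z):=\frac{\phi_{22}z+\phi_{12}}{\phi_{21}z+\phi_{11}}$ and $\rho(z):=(\phi_{21}z+\phi_{11})^{n-1}$, so that $\phi f_i(z)=\rho(z)\,f_i(\psi(z))$. Two elementary determinantal identities for Wronskians then apply: under a change of variable, $\Wr_{f_1\circ\psi,\dots,f_d\circ\psi}(z)=\psi'(z)^{\binom{d}{2}}\,\Wr_{f_1,\dots,f_d}(\psi(z))$, since the matrix of derivatives of the $f_i\circ\psi$ factors as a lower-triangular matrix with diagonal entries $(\psi')^{0},(\psi')^{1},\dots,(\psi')^{d-1}$ times the matrix of derivatives of the $f_i$ evaluated at $\psi(z)$; and under multiplication by a common factor, $\Wr_{\rho f_1,\dots,\rho f_d}(z)=\rho(z)^{d}\,\Wr_{f_1,\dots,f_d}(z)$ by the Leibniz rule. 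Since $\det\phi=1$ gives $\psi'(z)=(\phi_{21}z+\phi_{11})^{-2}$, combining these (as an identity of rational, hence polynomial, functions) yields
\begin{align*}
\Wr_{\phi f_1,\dots,\phi f_d}(z)
&=(\phi_{21}z+\phi_{11})^{(n-1)d-2\binom{d}{2}}\,\Wr_{f_1,\dots,f_d}(\psi(z))\\
&=(\phi_{21}z+\phi_{11})^{N}\,\Wr_{f_1,\dots,f_d}(\psi(z)),
\end{align*}
using $(n{-}1)d-d(d{-}1)=d(n{-}d)=N$; and the last expression is precisely $\phi\big(\Wr_{f_1,\dots,f_d}\big)(z)$ for the $\SL_2(\CC)$-action on $\pol{N}$.

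For the second fact, argue directly on a dense locus: if $\Wr(x;z)=c\prod_{i=1}^{N}(z+a_i)$ with all $a_i$ finite, then clearing denominators in $(\phi_{21}z+\phi_{11})^{N}\prod_i(\psi(z)+a_i)$ gives $\phi\big(\Wr(x;z)\big)=c\prod_i\big((\phi_{22}+a_i\phi_{21})z+(\phi_{12}+a_i\phi_{11})\big)$, whose roots, negated, are the points $\frac{\phi_{11}a_i+\phi_{12}}{\phi_{21}a_i+\phi_{22}}=\phi(a_i)$; hence $\pi(\phi(x))=\phi(\pi(x))$. The main obstacle I anticipate is purely the bookkeeping of degenerate configurations --- roots of $\Wr(x;z)$ at $\infty$, a degree drop in $\phi\big(\Wr(x;z)\big)$ when some $\phi_{21}a_i+\phi_{22}=0$, and the special case $\phi_{21}=0$ --- rather than anything substantive; the authors' remark that the proposition is ``straightforward to check'' reflects this. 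One can dispose of it by tracking the degrees of the homogenized polynomials throughout, or more cheaply by noting that for each fixed $\phi$ both $x\mapsto\pi(\phi(x))$ and $x\mapsto\phi(\pi(x))$ are morphisms from $X$ to the space of $N$-element multisubsets of $\CP^1$, that they agree on the dense open locus where $\Wr(x;z)$ has $N$ distinct finite roots none of which is sent to $\infty$ by $\phi$, and therefore agree identically.
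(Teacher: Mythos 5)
The paper gives no proof of this proposition --- it is stated as ``straightforward to check'' and left to the reader --- so there is no argument in the paper to compare against. Your proof is correct and complete: it supplies exactly the verification the paper suppresses, namely that the Wronski map is $\SL_2(\CC)$-equivariant via the two standard Wronskian identities (change of variables contributing $(\psi')^{\binom{d}{2}}$, common factor contributing $\rho^d$) with the exponent count $d(n{-}1)-d(d{-}1)=N$ matching the defined action on $\pol{N}$, and that the induced action on root multisets agrees with the M\"obius action on $\CP^1$, extended from the dense open locus by the fact that both sides are morphisms $X\to\mathrm{Sym}^N\CP^1$.
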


We will use the following notation to describe the fibres of
the Wronski map.
For a multiset $\bolda = \{a_1, \ldots, a_N\} \Subset \CP^1$, let
$X(\bolda) := \pi^{-1}(\bolda) = \{x \in X \mid \pi(x) = \bolda\}$.
Thus $X(\bolda)$ is the fibre of the map $\Wr$ at the point
$\prod_{a_i \neq \infty} (z+a_i)$.  
If $\bolda_t$, $t \in [0,1]$, is a path in the space of $N$-element 
multisubsets of $\CP^1$ such that the fibre $X(\bolda_t)$ is reduced
for all $t \in [0,1]$, we write $x_t \in X(\bolda_t)$ to describe
a lifting of this path to $X$.  If $x_0$ is specified, this
lifting is unique.  In particular, we associate to each 
$x_0 \in X(\bolda_0)$ a point $x_1 \in X(\bolda_1)$.  The \bfdef{monodromy} 
of the path $\bolda_t$ is the bijection $X(\bolda_0) \to X(\bolda_1)$ 
defined by this process.

When the roots of the Wronskian are real, we will generally restrict 
the action of $\SL_2(\CC)$ to the subgroup $\SL_2(\RR)$,
as exemplified in the following important corollary
of Theorem~\ref{thm:ssconj}.

%
%

\begin{corollary}
\label{cor:nomonodromy}
Let $\bolda_t$, $t \in [0,1]$ be a loop in the space of $N$-element
subsets of $\RP^1$.  Suppose there exists some $w \in \RP^1$ such
that $w \notin \bolda_t$ for all $t \in [0,1]$.  Then the monodromy
of $\bolda_t$ is trivial, i.e. the identity map.
\end{corollary}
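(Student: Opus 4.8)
The plan is to use the hypothesis $w \notin \bolda_t$ to move the problematic point $w$ out to infinity and thereby reduce to a monodromy computation on a genuine space of real polynomials with real roots, where Theorem~\ref{thm:ssconj} guarantees that the fibre stays reduced and real throughout, and where an explicit contractibility argument applies. First I would choose $\phi \in \SL_2(\RR)$ with $\phi(w) = \infty$; by Proposition~\ref{prop:sl2equivariant} the $\SL_2(\RR)$-action carries the loop $\bolda_t$ to a loop $\phi(\bolda_t)$, carries the fibres $X(\bolda_t)$ isomorphically to $X(\phi(\bolda_t))$, and intertwines the two monodromy maps. So it suffices to prove the statement under the extra assumption that $\infty \notin \bolda_t$ for all $t$, i.e. that $\bolda_t$ is a loop of $N$-element subsets of $\RR$ (not just $\RP^1$). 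Equivalently, $\bolda_t$ corresponds to a loop $g_t(z) \in \Rpol{N}$ of \emph{monic} degree-$N$ polynomials with $N$ distinct real roots, and the fibre $X(\bolda_t) = \Wr^{-1}(g_t(z))$ is reduced by Theorem~\ref{thm:ssconj}.

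Next I would argue that the space $\mathcal{P}$ of monic degree-$N$ real polynomials with $N$ distinct real roots is connected and, more importantly, simply connected --- indeed it is homeomorphic (via the roots, listed in increasing order) to the open convex set $\{(r_1,\dots,r_N) \in \RR^N \mid r_1 < r_2 < \dots < r_N\}$, which is contractible. Hence the loop $\bolda_t$ is null-homotopic within the base over which Theorem~\ref{thm:ssconj} guarantees reduced fibres. By the standard covering-space/monodromy principle --- the restriction of $\Wr$ over $\mathcal{P}$ is a covering map, since it is finite (Theorem~\ref{thm:flatfinite}) with reduced fibres of constant cardinality over the connected base $\mathcal{P}$ --- the monodromy of a null-homotopic loop is trivial. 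This gives the result.

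The step I expect to require the most care is the passage from "$\Wr$ is flat and finite with reduced fibres over $\mathcal{P}$" to "$\Wr$ restricted over $\mathcal{P}$ is an honest covering space in the topological sense," so that homotopy invariance of monodromy applies. Flat $+$ finite $+$ reduced fibres (equivalently, unramified) over a reduced base gives an étale morphism, and étale morphisms of finite type over $\CC$ are local analytic isomorphisms; combined with properness/finiteness of the fibres this yields a topological covering of $\mathcal{P}$ by (the real points sitting inside) $X$. One must also confirm that the lift $x_t$ used to define monodromy in the paper's sense coincides with the covering-space lift, which is immediate from the uniqueness of path lifting once the covering property is in hand. Everything else --- the $\SL_2(\RR)$-reduction and the contractibility of $\mathcal{P}$ --- is routine.
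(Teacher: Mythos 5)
Your proposal is correct and follows essentially the same route as the paper: reduce via $\SL_2(\RR)$ to the case $w = \infty$ (using Proposition~\ref{prop:sl2equivariant}), then observe that the base of polynomials with $N$ distinct real roots is simply connected and that Theorem~\ref{thm:ssconj} gives reduced fibres there, so the monodromy is trivial. The paper's version is terser — it simply asserts that the subspace $Z \subset \PP(\pol{N})$ of polynomials with $N$ distinct real roots is simply connected and that reducedness of fibres forces trivial monodromy — whereas you fill in the contractibility of the ordered-root configuration space and the étale-implies-covering step, but the underlying argument is the same.
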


\begin{proof}
First suppose $w = \infty$.
Let $Z \subset \PP(\pol{N})$ be the topological subspace of 
polynomials with exactly $N$ distinct real roots.
Then $\bolda_t$ encodes path in $Z$, which is a simply connected
space.  Since the fibres of the map $\Wr: \Wr^{-1}(Z) \to Z$
are reduced by Theorem~\ref{thm:ssconj}, the monodromy is necessarily 
trivial.

For other $w$, there exists 
$\phi \in \SL_2(\RR)$ such that $\phi(w) = \infty$.  From the first case,
we know that the monodromy of the loop $\phi(\bolda_t)$ is trivial, 
and the result follows from Proposition~\ref{prop:sl2equivariant}.
\end{proof}

\subsection{Partitions and Pl\"ucker coordinates on $X$}

Let $\Lambda$ denote the set of all partitions whose
whose diagrams fit inside a $d \times (n{-}d)$ rectangle.  Formally,
these are decreasing sequences of integers
$\lambda = (\lambda_1 \geq \dots \geq \lambda_d)$, where
$n{-}d \geq \lambda_1$ and $\lambda_d \geq 0$. 
We will draw the diagram of $\lambda \in \Lambda$ in the English 
convention, with $\lambda_1$ boxes left justified in the top row 
of a $d \times (n{-}d)$ rectangle, $\lambda_2$ in the next row, etc.
For $\lambda \in \Lambda$, the number of boxes in the diagram
of $\lambda$ is denoted
$|\lambda| := \lambda_1 + \dots + \lambda_d$.  
If $|\lambda| = k$, we say $\lambda$ is a partition of $k$, and write 
$\lambda \vdash k$.  
The set $\Lambda$ is partially ordered by inclusion of diagrams: we
write $\lambda \geq \mu$ iff $\lambda_i \geq \mu_i$ for all $i$,
and $\lambda \succ \mu$ iff $\lambda > \mu$ and $|\lambda|=|\mu|+1$.

The empty partition $0 \geq \dots \geq 0$ is denoted $\varnothing$.
We denote the unique partition of $1$ by $\lowBox$, 
since its diagram consists of a single box.
The largest partition in $\Lambda$, $n{-}d \geq \dots \geq n{-}d$,
is denoted $\Rect$.

Partitions whose diagrams fit inside $\Rect$ are in bijection with 
$d$-element subsets of $\{1,\dots,n\}$:
for $\lambda \in \Lambda$, 
set 
$$J(\lambda) := \{j+\lambda_{d+1-j} \mid 1 \leq j \leq d\}\,.$$

The \bfdef{Pl\"ucker coordinates} of a point $x \in X$ are the homogeneous
coordinates $[p_\lambda(x)]_{\lambda \in \Lambda}$, defined as follows.
Suppose the subspace of $\poln$ represented by $x$ is the linear span 
of polynomials $f_1(z), \dots, f_d(z)$.
Consider the $d \times n$ matrix $A_{ij} := [z^{j-1}]f_i(z)$, whose
entries are the coefficients of the polynomials $f_i(z)$.
Then
$p_\lambda(x) := A_{J(\lambda)}$ is the maximal minor of $A$ with
column set $J(\lambda)$.

For all $\lambda \in \Lambda$, define $q_\lambda$ to be the
Vandermonde determinant
\begin{equation}
\label{eqn:defq}
q_\lambda := 
\begin{vmatrix}
1   & \cdots & 1   \\
k_1 & \cdots & k_d \\
\vdots & \vdots  & \vdots \\
k_1^{d-1} &\cdots & k_d^{d-1} \\
\end{vmatrix} = \prod_{1 \leq i<j \leq d} (k_j - k_i)\,,
\end{equation}
where $k_j = j+\lambda_{d+1-j}$.  In particular, note that $q_\lambda > 0$.

\begin{proposition}  
\label{prop:wrplucker}
The Wronskian $\Wr(x;z)$ is (up to a scalar multiple)
given explicitly in terms of the Pl\"ucker coordinates of $x$ by 
\begin{equation}
\label{eqn:pluckerwronskian}
\Wr(x;z) = \sum_{\lambda \in \Lambda}  q_\lambda p_\lambda(x) z^{|\lambda|}\,.
\end{equation}
\end{proposition}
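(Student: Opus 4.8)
The plan is to compute the Wronskian directly from a basis adapted to the Plücker expansion. Fix $x \in X$ with span $\langle f_1(z),\dots,f_d(z)\rangle$, and write $f_i(z) = \sum_{j=1}^{n} A_{ij} z^{j-1}$, so that $A$ is the $d\times n$ coefficient matrix and $p_\lambda(x) = A_{J(\lambda)}$ is the maximal minor on columns $J(\lambda)$. Since the Wronskian $\Wr_{f_1,\dots,f_d}(z)$ is $\CC$-multilinear and alternating in the rows $f_1,\dots,f_d$, and since each $f_i$ is a $\CC$-linear combination of the monomials $z^{0},z^{1},\dots,z^{n-1}$, I would expand $\Wr_{f_1,\dots,f_d}(z)$ by multilinearity over all ways of choosing, for each row $i$, a monomial $z^{k_i-1}$ (with $k_i\in\{1,\dots,n\}$, contributing a factor $A_{i,k_i}$). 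The alternating property kills every term in which two chosen exponents coincide, so only strictly-distinct tuples $(k_1,\dots,k_d)$ survive; grouping these into an unordered $d$-subset $K = \{k_1<\dots<k_d\}$ and summing over the $d!$ orderings reproduces exactly the minor $A_K = \sum_{\sigma}\sgn(\sigma)\prod_i A_{i,k_{\sigma(i)}}$ times the Wronskian of the pure monomials $z^{k_1-1},\dots,z^{k_d-1}$. Writing $K = J(\lambda)$ for the corresponding $\lambda\in\Lambda$, this gives
\begin{equation*}
\Wr_{f_1,\dots,f_d}(z) = \sum_{\lambda\in\Lambda} p_\lambda(x)\,\Wr_{z^{k_1-1},\dots,z^{k_d-1}}(z),
\end{equation*}
where $k_j = j + \lambda_{d+1-j}$ as in the statement.

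It then remains to evaluate the monomial Wronskian $W_K(z) := \Wr_{z^{k_1-1},\dots,z^{k_d-1}}(z)$. The $(a,b)$ entry of the defining matrix is $\frac{d^{a-1}}{dz^{a-1}} z^{k_b-1} = (k_b-1)(k_b-2)\cdots(k_b-a+1)\, z^{k_b-a}$, which is a scalar times $z^{k_b-a}$. Pulling the common power $z^{k_b - (d-1) \cdots}$ out of each column (more precisely, factoring $z^{k_b - d + 1}$ appropriately, or simply observing that every entry of column $b$ is a monomial of the form $c_{a,b} z^{k_b-a}$) shows $W_K(z) = c\cdot z^{(k_1-1)+\dots+(k_d-1) - \binom{d}{2}}$ for some constant $c$. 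The exponent is $\sum_j (k_j - 1) - \binom{d}{2} = \sum_j \lambda_{d+1-j} = |\lambda|$, using $\sum_j j = \binom{d+1}{2}$ and hence $\sum_j(j-1) = \binom{d}{2}$. So $W_K(z) = c_\lambda z^{|\lambda|}$, and I must identify $c_\lambda$ with $q_\lambda$. For this I would substitute $z = 1$ (legitimate since the polynomial is a monomial): $c_\lambda = W_K(1) = \det\big[(k_b-1)(k_b-2)\cdots(k_b-a+1)\big]_{a,b}$. Row-reduction — each power $z^{a-1}$-th derivative is an upper-triangular-unipotent combination of falling factorials, or rather the falling factorial $(k)_{a-1} := k(k-1)\cdots(k-a+2)$ equals $k^{a-1}$ plus lower powers of $k$ — turns this determinant into the Vandermonde $\det[k_b^{\,a-1}]_{a,b} = \prod_{i<j}(k_j-k_i)$, which is precisely $q_\lambda$ by \eqref{eqn:defq}.

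The main obstacle is the bookkeeping in the last step: one must be careful that the matrix whose determinant is $c_\lambda$ really does reduce to the Vandermonde matrix in the variables $k_1,\dots,k_d$, and that no spurious sign or scalar is introduced. The cleanest way is to note that the map $z^{k-1}\mapsto$ (its column of derivatives evaluated at $z=1$) followed by the change of basis from $\{1, k, k^2, \dots\}$ to $\{1, (k)_1, (k)_2, \dots\}$ in the "$k$" variable is given by a single unipotent upper-triangular matrix independent of $\lambda$, with determinant $1$; hence $c_\lambda$ and the Vandermonde $\prod_{i<j}(k_j-k_i)$ agree exactly. Since \eqref{eqn:pluckerwronskian} is only asserted up to a global scalar, and indeed both sides are only defined up to scalar (the left because $\Wr(x;z)$ depends on the choice of basis, the right because the $p_\lambda(x)$ are homogeneous coordinates), the computation above — carried out for one fixed choice of basis — suffices, and no further normalization is needed. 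A sanity check in the case $d=1$ (where $\Wr(x;z) = f_1(z) = \sum_{j} A_{1j} z^{j-1}$, $q_\lambda = 1$, and $\lambda_1$ ranges over $0,\dots,n-1$) confirms the formula.
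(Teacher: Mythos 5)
Your proof is correct and follows essentially the same route as the paper: the expansion by multilinearity and alternation that you carry out by hand is precisely the Cauchy--Binet identity, which the paper applies to $\det(BA^t)$ with $B_{ij} = (\tfrac{d}{dz})^{i-1}z^{j-1}$; and your evaluation of the monomial Wronskian as $q_\lambda z^{|\lambda|}$ supplies the details of the computation that the paper states without proof (``it is not hard to calculate that the maximal minor of $B$ with column set $J(\lambda)$ is $q_\lambda z^{|\lambda|}$''). One small slip of phrasing: under the paper's convention the Wronskian is multilinear and alternating in the \emph{columns} (one per $f_i$), not the rows, but since determinants are transpose-invariant this has no effect on your argument.
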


\begin{proof}
Consider the $d \times n$ matrix $B_{ij} = (\derz)^{i-1}z^{j-1}$.
We have $(B A^t)_{ij} = f^{(i-1)}_j(z)$.
Moreover, it is not hard to calculate that the maximal minor of $B$ 
with column set $J(\lambda)$ is $q_\lambda z^{|\lambda|}$.
Thus, using the Cauchy-Binet determinant formula,
\begin{alignqed}
\Wr(x;z) &= \det (B A^t)  \\
& = \sum_{\lambda \in \Lambda} A_{J(\lambda)} B_{J(\lambda)} \\
& = \sum_{\lambda \in \Lambda} p_\lambda(x)\, q_\lambda z^{|\lambda|}\,.
\end{alignqed}
\end{proof}

\subsection{Schubert varieties}
\label{sec:schubert}

For $a \in \CP^1$, we define full flags
$$F_\bullet(a) 
= \{0\} \subset F_1(a) \subset \dots \subset F_{n-1}(a) \subset \poln$$
in $\poln$.
If $a \in \CC$, $$F_i(a) := (z+a)^{n-i}\CC[z] \cap \poln$$ 
is the set of
polynomials in $\poln$ divisible by $(z+a)^{n-i}$.
For $a = \infty$, we set
$$F_i(\infty) := \pol{i-1}\,.$$
It is straightforward to verify that 
$F_\bullet(\infty) = \lim_{a \to \infty} F_\bullet(a)$.

For every $\lambda \in \Lambda$, we have
a \bfdef{Schubert cell} relative to the 
flag $F_\bullet(a)$:
$$X^\circ_\lambda(a) := \{x \in X \mid \dim x \cap F_i(a) =
|J(\lambda) \cap \{n{-}i{+}1, \dots, n\}|\}.$$
Its closure, $X_\lambda(a) := \overline{X^\circ_\lambda(a)}$ is the
\bfdef{Schubert variety}.  The codimension of $X_\lambda(a)$ in $X$ 
is $|\lambda|$.
When the codimension is $1$, i.e. $\lambda = \lowBox$, we call 
$X_\Box(a)$ a \bfdef{Schubert divisor}.

The Schubert varieties $X_\lambda(0)$ and Schubert cells 
$X^\circ_\lambda(0)$ can be characterized in terms of the
Pl\"ucker coordinates on $X$.  

\begin{lemma}
\label{lem:pluckerschubert}
Let $x \in X$ be a closed point.  Then 
\begin{enumerate}
\item[(i)] 
$x \in X_\lambda(0)$ if and only if $p_\mu(x) = 0$ for all 
$\mu \ngeq \lambda$;
\item[(ii)]
$x \in X^\circ_\lambda(0)$ if and
only if $p_\lambda(x) \neq 0$, and $p_\mu(x) = 0$ 
for all $\mu \ngeq \lambda$.
\end{enumerate}
\end{lemma}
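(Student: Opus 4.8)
The plan is to unwind both sides of each equivalence directly in terms of the combinatorics of $d$-element subsets of $\{1,\dots,n\}$, using the dictionary $\lambda \leftrightarrow J(\lambda)$, and then translate the Schubert cell condition into a statement about Pl\"ucker coordinates via a row-echelon normal form. First I would recall that the flag $F_\bullet(0)$ has $F_i(0) = z^{n-i}\CC[z]\cap\poln = \langle z^{n-i}, z^{n-i+1}, \dots, z^{n-1}\rangle$, i.e. it is the coordinate flag associated to the monomial basis $1, z, \dots, z^{n-1}$ read from the top degree down. Hence the Schubert cell $X^\circ_\lambda(0)$ is precisely the set of $x\in X$ whose representing matrix $A$ (with $A_{ij}=[z^{j-1}]f_i(z)$), when put in reduced row-echelon form, has its pivot columns exactly in the positions $J(\lambda)$. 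The key bookkeeping point to verify is that the defining condition $\dim x\cap F_i(0) = |J(\lambda)\cap\{n-i+1,\dots,n\}|$ is equivalent to ``$J(\lambda)$ is the pivot set,'' which is the standard translation between jump sequences of a subspace relative to a coordinate flag and the combinatorial type of the associated Schubert cell; I would state this and give the short argument counting, for each $i$, the dimension of the intersection with $F_i(0)$ in terms of how many pivots lie in the last $i$ columns.

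Next I would prove (ii). For the forward direction, if $x\in X^\circ_\lambda(0)$, take the row-echelon representative with pivots in columns $J(\lambda)$; then $p_\lambda(x)$ is (up to sign) the determinant of the identity submatrix on those columns, hence nonzero, while for any $\mu$ with $\mu\ngeq\lambda$ the column set $J(\mu)$ must contain some index strictly smaller than the corresponding pivot of $J(\lambda)$, forcing a zero column below that pivot in the echelon form and hence $p_\mu(x)=0$. The numerical fact I need here is the order-reversing correspondence: $\mu\le\lambda$ in $\Lambda$ if and only if $J(\mu)$ is dominated by $J(\lambda)$ entrywise (in the natural ordering of the sorted $d$-subsets), so $\mu\ngeq\lambda$ means $J(\mu)$ fails to dominate $J(\lambda)$, and a standard echelon-form argument shows any such minor vanishes. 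For the converse, suppose $p_\lambda(x)\ne0$ and $p_\mu(x)=0$ for all $\mu\ngeq\lambda$; I would argue that $\lambda$ is then the lexicographically-first (equivalently, the unique minimal in dominance) index with nonvanishing Pl\"ucker coordinate, and this pins down the pivot set of $x$ to be $J(\lambda)$, i.e. $x\in X^\circ_\lambda(0)$.

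Finally, (i) follows from (ii) by taking closures: $X_\lambda(0) = \overline{X^\circ_\lambda(0)} = \bigsqcup_{\nu\ge\lambda} X^\circ_\nu(0)$ by the standard cell decomposition of a Schubert variety, and a point of $X^\circ_\nu(0)$ has $p_\mu=0$ for all $\mu\ngeq\nu$, which certainly includes all $\mu\ngeq\lambda$ when $\nu\ge\lambda$; conversely the vanishing conditions $p_\mu(x)=0$ for all $\mu\ngeq\lambda$ are closed, are satisfied on $X^\circ_\lambda(0)$, hence on its closure, and cut out exactly $\bigsqcup_{\nu\ge\lambda}X^\circ_\nu(0)$ by applying (ii). I expect the main obstacle to be getting the two combinatorial translations exactly right with the correct conventions: (a) that $F_\bullet(0)$ corresponds to the monomial basis in the order that makes $J(\lambda)$ the pivot set (as opposed to its complement or reversal), and (b) the order-reversal in $J(\lambda) := \{j+\lambda_{d+1-j}\}$ which uses $\lambda$ read backwards — one must check that ``$\mu\ngeq\lambda$'' translates to the vanishing of precisely the minors on column sets that lie below $J(\lambda)$, not above. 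Everything else is the routine echelon-form computation of minors, which I would not spell out in full.
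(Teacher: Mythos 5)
Your proposal is correct and follows essentially the same route the paper indicates: the paper's proof is exactly the one-line observation that $x \in X^\circ_\lambda(0)$ iff the pivots of $A$ are in columns $J(\lambda)$, and you supply the standard row-echelon/minor bookkeeping that fills this in. One terminological slip worth fixing: you call $\lambda\mapsto J(\lambda)$ an "order-reversing correspondence," but as your next clause correctly states ($\mu\le\lambda$ iff $J(\mu)$ is entrywise dominated by $J(\lambda)$) it is order-\emph{preserving}; the reversal in the index $d{+}1{-}j$ is internal to the definition and does not flip the partial order.
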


The proof is straightforward, using the fact that $x \in X^\circ_\lambda(0)$
iff the pivots of the matrix $A$ are in columns $J(\lambda)$.  
In fact it is true that the conditions
of Lemma~\ref{lem:pluckerschubert}(i) define $X_\lambda(0)$ 
scheme-theoretically (see \cite{HP}), but we will not need this.

\begin{theorem}
\label{thm:svroots}
Let $x \in X$ be a closed point, $a \in \CP^1$, and $k \geq 0$ an 
integer.
Then $a \in \pi(x)$ with multiplicity at least $k$ if and only if
$x \in X_\lambda(a)$ for some $\lambda \vdash k$.
\end{theorem}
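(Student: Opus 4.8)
The plan is to reduce the statement to the case $a=0$ via the $\SL_2(\CC)$-action, and then to translate the multiplicity condition on the Wronskian into a vanishing condition on Plücker coordinates using Proposition~\ref{prop:wrplucker} and Lemma~\ref{lem:pluckerschubert}. First I would observe that both conditions in the theorem are $\SL_2(\CC)$-equivariant: by Proposition~\ref{prop:sl2equivariant}, $\phi$ sends $\pi(x)$ to $\pi(\phi(x))$, so ``$a\in\pi(x)$ with multiplicity $\geq k$'' becomes ``$\phi(a)\in\pi(\phi(x))$ with multiplicity $\geq k$''; and since the flags $F_\bullet(a)$ satisfy $\phi(F_\bullet(a)) = F_\bullet(\phi(a))$ (a short check from the definitions, including the limiting case at $\infty$), we get $\phi(X_\lambda(a)) = X_\lambda(\phi(a))$. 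Also note the multiplicity of $a=\infty$ in $\pi(x)$ is $N-\deg\Wr(x;z)$ by our convention. Choosing $\phi$ with $\phi(a)=0$, it therefore suffices to prove: $0\in\pi(x)$ with multiplicity $\geq k$ iff $x\in X_\lambda(0)$ for some $\lambda\vdash k$.

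Next, by Proposition~\ref{prop:wrplucker}, $\Wr(x;z) = \sum_{\lambda\in\Lambda} q_\lambda p_\lambda(x)\, z^{|\lambda|}$ with $q_\lambda>0$. Since $\Wr(x;-z)$ has the same roots at $0$ as $\Wr(x;z)$, the multiplicity of $0$ in $\pi(x)$ is the order of vanishing at $z=0$ of this polynomial, i.e. the smallest $|\lambda|$ for which $p_\lambda(x)\neq 0$. So ``$0\in\pi(x)$ with multiplicity $\geq k$'' is equivalent to ``$p_\mu(x)=0$ for all $\mu$ with $|\mu|<k$.'' On the Schubert side, $x\in X_\lambda(0)$ for some $\lambda\vdash k$ means, by Lemma~\ref{lem:pluckerschubert}(i), that there is a $\lambda$ with $|\lambda|=k$ such that $p_\mu(x)=0$ whenever $\mu\ngeq\lambda$. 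Since $\mu\geq\lambda$ forces $|\mu|\geq|\lambda|=k$, membership in such a Schubert variety certainly implies $p_\mu(x)=0$ for all $|\mu|<k$; this gives one direction immediately.

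For the converse — the step I expect to be the main obstacle — suppose $p_\mu(x)=0$ for all $|\mu|<k$, and let $m\geq k$ be the true order of vanishing, so that some $p_\nu(x)\neq 0$ with $|\nu|=m$ and all $p_\mu(x)=0$ for $|\mu|<m$. I would use the pivot description of Plücker coordinates: the Plücker coordinates $p_\mu(x)$ correspond to maximal minors $A_{J(\mu)}$ of a matrix $A$ representing $x$, and the minimal $\mu$ (in dominance/inclusion order) with $p_\mu(x)\neq 0$ is exactly the one indexed by the pivot columns of the row-reduced form of $A$; that partition $\lambda_0$ has $|\lambda_0| = m\geq k$. By Lemma~\ref{lem:pluckerschubert}(ii), $x\in X^\circ_{\lambda_0}(0)\subset X_{\lambda_0}(0)$. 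If $m=k$ we are done. If $m>k$, I need a partition $\lambda$ of exactly $k$ with $x\in X_\lambda(0)$; since $\lambda_0$ has $|\lambda_0|=m>k$, I can choose any $\lambda\leq\lambda_0$ with $|\lambda|=k$ (such a sub-partition exists because one can delete boxes one at a time from $\lambda_0$ while staying in $\Lambda$). Then $\mu\ngeq\lambda$ must be checked to imply $p_\mu(x)=0$: if $\mu\ngeq\lambda$ but $|\mu|\geq k$ this is not automatic, so instead I would argue directly that $X_{\lambda_0}(0)\subseteq X_\lambda(0)$ whenever $\lambda\leq\lambda_0$ — this is the standard nesting of Schubert varieties relative to a fixed flag, which follows from the definition of $X^\circ_\mu(a)$ via the dimensions $\dim x\cap F_i(0)$ together with the semicontinuity of these dimensions on the closure $X_{\lambda_0}(0)$. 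Granting that nesting, $x\in X_{\lambda_0}(0)\subseteq X_\lambda(0)$ with $\lambda\vdash k$, completing the proof. The one genuinely delicate point is justifying $X_{\lambda_0}(0)\subseteq X_\lambda(0)$ for $\lambda\le\lambda_0$ cleanly from the definitions given; alternatively, one can bypass it by noting that Lemma~\ref{lem:pluckerschubert}(i) does hold with $\lambda$ replaced by any $\lambda\le\lambda_0$ once one knows $p_\mu(x)=0$ for all $\mu\not\ge\lambda_0$, since $\mu\not\ge\lambda$ together with $x\in X^\circ_{\lambda_0}(0)$ and the pivot characterization forces the relevant minor to vanish.
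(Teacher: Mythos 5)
Your proof is correct, but it takes a genuinely different route from the paper's for the converse direction. Both proofs reduce to $a=0$ via $\SL_2(\CC)$-equivariance (the paper invokes Proposition~\ref{prop:sl2equivariant} tacitly; you are right that the auxiliary identity $\phi(F_\bullet(a))=F_\bullet(\phi(a))$ is the precise thing that needs checking), and both use Proposition~\ref{prop:wrplucker} and Lemma~\ref{lem:pluckerschubert} to translate the multiplicity of $0$ in $\pi(x)$ into vanishing of low-degree Pl\"ucker coordinates. The paper then proves the converse by \emph{induction on $k$}: assuming $x\in X_\lambda(0)$ for some $\lambda\vdash k{-}1$, it reads off from \eqref{eqn:pluckerwronskian} that $p_\lambda(x)=0$ as well, so $x\in X_\lambda(0)\setminus X^\circ_\lambda(0)$, hence $x\in X_{\lambda'}(0)$ for some $\lambda'\succ\lambda$ with $\lambda'\vdash k$. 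You instead argue \emph{directly}: find the unique Schubert cell $X^\circ_{\lambda_0}(0)$ containing $x$ (the pivot partition), observe $|\lambda_0|=m\ge k$ because $\lambda_0$ is the minimum of $\{\mu:p_\mu(x)\ne0\}$ in the inclusion order, and then pick any $\lambda\le\lambda_0$ with $\lambda\vdash k$ and use the nesting $X_{\lambda_0}(0)\subseteq X_\lambda(0)$. The ``delicate point'' you flag is in fact immediate from Lemma~\ref{lem:pluckerschubert}(i): if $\lambda\le\lambda_0$ and $\mu\not\ge\lambda$, then $\mu\not\ge\lambda_0$, so $p_\mu(x)=0$; there is nothing further to check, and no appeal to semicontinuity is needed. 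Your approach is arguably cleaner, since the paper's induction quietly relies on the same cell-decomposition/nesting fact (in asserting $x\in X_{\lambda'}(0)$ with $|\lambda'|=k$) that you make explicit; the paper's approach, on the other hand, uses \eqref{eqn:pluckerwronskian} more substantively at each step rather than only at the start.
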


\begin{proof}
By the $\SL_2(\CC)$-equivariance of the Wronski map
(Proposition~\ref{prop:sl2equivariant}), it is enough to
prove this for $a=0$.  If $x \in X_\lambda(0)$, then 
by Lemma~\ref{lem:pluckerschubert}(i), all Pl\"ucker
coordinates $p_\lambda(x)$ for $|\lambda| < k$ are zero, and hence
by~\eqref{eqn:pluckerwronskian}, $z^k$ divides $\Wr(x;z)$.  

To prove the converse, we proceed by induction.  The result is
trivially true for $k=0$; assume $0 \in \pi(x)$ has multiplicity
$k>0$, and the result is true for $k-1$.  Then $x \in X_\lambda(0)$
for some $\lambda \vdash k{-}1$; hence by Lemma~\ref{lem:pluckerschubert}(i),
$p_\mu(x) = 0$ for all
$\mu \ngeq \lambda$, in particular for all
$|\mu| \leq k{-}1$ apart from $\lambda = \mu$.  But then 
by~\eqref{eqn:pluckerwronskian},
$$\Wr(x;z) = q_\Rect p_\Rect(x) z^N + \dots 
+ q_\lambda p_\lambda(x)z^{k-1}.$$
Since $\Wr(x;z)$
is divisible by $z^k$, we see that $p_\lambda(x)=0$.  Thus by 
Lemma~\ref{lem:pluckerschubert}(ii), 
$x \notin X^\circ_\lambda(0)$.  Hence $x \in X_\lambda(0) \setminus 
X^\circ_\lambda(0)$, i.e. $x \in X_{\lambda'}$ for some 
$\lambda' > \lambda$.
\end{proof}

In particular if $\bolda = \{a_1, \dots, a_N\}$ has $N$ distinct
elements, then
$$X(\bolda) = \bigcap_{i=1}^N X_\lowBox(a_i)\,.$$
By Theorem~\ref{thm:flatfinite}, this intersection is proper, and
hence the number of intersection points counted with multiplicities 
is given by the Schubert intersection number
$$\int_X [X_\lowBox]^N\,,$$
where $[X_\lowBox] \in H^2(X)$ denotes the cohomology class of a
Schubert divisor $X_\lowBox(a)$
(which is independent of $a \in \CP^1$).  It is a basic result in 
Schubert calculus that this intersection number is the number
of standard Young tableaux of shape $\Rect$ (see e.g. \cite{Ful}).

More generally if $\bolda$ is a multiset, then set-theoretically we
have
\begin{equation}
\label{eqn:multschubertint}
X(\bolda) = \bigcap_{a \in \bolda} 
\ \bigcup_{\lambda \vdash m(a)} X_\lambda(a)\,,
\end{equation}
where $m(a)$ denotes the multiplicity of $a \in \bolda$.
However, by considering the total multiplicity of both 
sides, it is easy to see that in general this is not 
true scheme-theoretically.  For
example, if $\bolda = \{a, a, \dots, a\}$, then the right hand side
consists of the single reduced point $X_\Rect(a)$, whereas on the left
hand side this point has multiplicity $\deg \Wr = |\ordSYT(\Rect)|$.
In fact we can say more about the multiplicities in general.
Scheme theoretically,
$X(\bolda)$ is defined by 
\begin{equation}
\label{eqn:unionofschuberts}
(z+a)^{m(a)}\big|\Wr(x;z)\,,
\end{equation}
for $a \in \bolda$, which is a system of
linear equations in the Pl\"ucker variables.  In general, each
equation~\eqref{eqn:unionofschuberts} defines a non-reduced scheme
supported on a union of Schubert varieties.

\begin{corollary}
\label{cor:svrootsmultiplicities}
Let $a \in \CP^1$, and let $k$ be a positive integer.
Consider the subscheme $X(a^{(k)})$ of $X$ defined the equations 
$(z+a)^k \text{ divides } \Wr(x;z)$.
Then the cycle defined by $X(a^{(k)})$ is
$$\sum_{\lambda \vdash k} |\ordSYT(\lambda)| \cdot X_\lambda(a)\,,$$
where $\ordSYT(\lambda)$ is the number of standard Young tableaux
of shape $\lambda$.
\end{corollary}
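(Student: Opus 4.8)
The plan is to realize $X(a^{(k)})$ as a linear section of the Grassmannian in its Pl\"ucker embedding, compute its class using the flatness of $\Wr$ together with Schubert calculus, and then recover the cycle itself by comparing coefficients in the Schubert basis. By the $\SL_2(\CC)$-equivariance of the Wronski map (Proposition~\ref{prop:sl2equivariant}) it suffices to treat $a=0$, though the argument is insensitive to the value of $a$. By Proposition~\ref{prop:wrplucker}, the divisibility $(z+a)^k \mid \Wr(x;z)$ is equivalent to the vanishing of $k$ linear combinations of the Pl\"ucker coordinates $p_\lambda(x)$ (for $a=0$, the combinations $\sum_{|\lambda|=j} q_\lambda p_\lambda$ for $0\le j\le k-1$). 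Writing $L\subset\PP(\pol{N})$ for the linear subspace of codimension $k$ consisting of polynomials divisible by $(z+a)^k$, the scheme $X(a^{(k)})$ is exactly the scheme-theoretic preimage $\Wr^{-1}(L)$.

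Next I would determine the class of the cycle $[X(a^{(k)})]$ in the Chow ring (equivalently, the cohomology) of $X$. Since $\Wr$ is flat and finite (Theorem~\ref{thm:flatfinite}), flat pullback of cycles gives $[X(a^{(k)})] = [\Wr^{-1}(L)] = \Wr^{*}[L]$; in particular $\Wr^{-1}(L)$ is pure of codimension $k$ and has no embedded components, since every associated point of a flat pullback lies over an associated point of the base, and $L$ is reduced and irreducible. Now $[L] = H^{k}$ in the Chow ring of $\PP(\pol{N})$, where $H$ is the hyperplane class, because $L$ is a reduced linear subspace of codimension $k$; and by Proposition~\ref{prop:wrplucker} the components of $\Wr$ are linear forms in the $p_\lambda$, so $\Wr^{*}\mathcal{O}(1)=\mathcal{O}_{X}(1)$ is the Pl\"ucker line bundle and $\Wr^{*}H = c_1(\mathcal{O}_{X}(1)) = [X_\lowBox]$, the Schubert divisor class. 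Therefore $[X(a^{(k)})] = [X_\lowBox]^{k}$, and by iterating Pieri's rule (equivalently, the classical expansion of $[X_\lowBox]^k$ in the Schubert basis) this equals $\sum_{\lambda\vdash k}|\ordSYT(\lambda)|\,[X_\lambda]$ in $H^{*}(X)$, where the terms with $\lambda\notin\Lambda$ vanish.

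To conclude that the identity holds already at the level of cycles, I would invoke Theorem~\ref{thm:svroots}: the support of $X(a^{(k)})$ is exactly $\bigcup_{\lambda\vdash k} X_\lambda(a)$ (with $X_\lambda(a)=\varnothing$ when $\lambda\notin\Lambda$), and for distinct $\lambda,\mu\vdash k$ neither of $X_\lambda(a)$, $X_\mu(a)$ contains the other, since $X_\lambda(a)\subseteq X_\mu(a)$ would force $\lambda\supseteq\mu$. As $X(a^{(k)})$ is pure of codimension $k$ with no embedded components, its associated cycle has the form $\sum_{\lambda\vdash k} m_\lambda\,[X_\lambda(a)]$ with each $m_\lambda$ a positive integer. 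Comparing this with the class computation and using that the Schubert classes $[X_\lambda]$, $\lambda\in\Lambda$, form a basis of $H^{*}(X)$, we get $m_\lambda=|\ordSYT(\lambda)|$ for every $\lambda\vdash k$, which is the assertion. (If $k>N$ there are no $\lambda\vdash k$ in $\Lambda$, so $X(a^{(k)})=\varnothing$ and both sides vanish.)

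The step requiring the most care is the purity of $X(a^{(k)})$: a priori the $k$ defining linear equations could meet improperly, in which case the class would not equal $[X_\lowBox]^{k}$ and the coefficient comparison would fail. This is precisely where the flatness of the Wronski map (Theorem~\ref{thm:flatfinite}) is indispensable---flat pullback along a finite flat morphism automatically yields a scheme that is pure of the expected codimension with no embedded components, so no separate dimension estimate is needed. The remaining ingredients---the identification $X(a^{(k)})=\Wr^{-1}(L)$, the facts $\Wr^{*}\mathcal{O}(1)=\mathcal{O}_X(1)$ and $c_1(\mathcal{O}_X(1))=[X_\lowBox]$, the expansion $[X_\lowBox]^{k}=\sum_{\lambda}|\ordSYT(\lambda)|\,[X_\lambda]$, and the linear independence of the Schubert basis---are all standard.
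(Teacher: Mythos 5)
Your proof is correct and follows essentially the same route as the paper's: both use Theorem~\ref{thm:svroots} to pin down the support, flatness of $\Wr$ to identify $[X(a^{(k)})]$ with $[X_\lowBox]^k$, and linear independence of Schubert classes to read off the coefficients. You simply spell out more of the bookkeeping—the identification with $\Wr^{-1}(L)$ for a linear $L$, the purity and absence of embedded primes via flat pullback, and the comparison of components—whereas the paper compresses all of this into one rational-equivalence statement.
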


\begin{proof}
The cycles $[X_\lambda]$ form a basis for the Chow group of $X$.
By Theorem~\ref{thm:svroots}, $X(a^{(k)})$ has support
$\bigcup_{\lambda \vdash k} X_\lambda(a)$.  Thus it is enough
to show that the multiplicity of $[X_\lambda(a)]$ in $[X(a^{(k)})]$
is $|\ordSYT(\lambda)|$.
Since $\Wr$ is flat, $X(a^{(k)})$ is rationally equivalent
to $\bigcap_{i=1}^k X_\lowBox(a_i)$ for any distinct 
$\{a_1, \dots, a_k\} \subset \CP^1$.  Thus 
$[X(a^{(k)})] =
[X_\lowBox]^k = 
\sum_{\lambda \vdash k} |\ordSYT(\lambda)| \cdot [X_\lambda(a)]$,
as required.
\end{proof}

\begin{remark} \rm
\label{rmk:ssconjparttwo}
Mukhin, Tarasov and Varchenko have recently shown \cite{MTV2} that the 
intersection on the right hand side of \eqref{eqn:multschubertint} is 
always reduced if the elements of $\bolda$ are real.  It follows from
Corollary~\ref{cor:svrootsmultiplicities} that for $\bolda \Subset \RP^1$, 
the multiplicity of a point $x \in X(\bolda)$ 
is exactly $\prod_{a \in \bolda} |\ordSYT(\lambda(x,a))|$, where 
$\lambda(x,a) \vdash m(\bolda)$ denotes the partition for which 
$x \in X_{\lambda(x,a)}(a)$.
This reducedness theorem is the second part of the Shapiro-Shapiro conjecture;
however, we will not need it in this paper.
\end{remark}

We conclude this expository section with a quick proof of
Theorem~\ref{thm:flatfinite}, using
Theorem~\ref{thm:svroots}.

\begin{proof}[Proof of Theorem~\ref{thm:flatfinite}] 
Every positive dimensional subvariety $Y$ of $X$ satisfies
$[Y]\cdot[X_\lowBox] \neq 0$ in $H^*(X)$, since $[Y]$ is a positive
linear combination of Schubert classes.  Thus if $\dim Y > 0$,
$Y \cap X_\lowBox(a) \neq \emptyset$ for all $a \in \CP^1$.

Consider a fibre $X(\bolda)$.  If
$a_0 \in \CC \setminus\bolda$, then $z+a_0$ does not divide 
$\Wr(x;z)$ for all $x \in X(\bolda)$.   
By Theorem~\ref{thm:svroots}, this means 
$X(\bolda) \cap X_\lowBox(a_0) = \emptyset$.  Thus 
$X(\bolda)$ is zero dimensional. Since $\Wr$ is projective,
this implies that it is a finite morphism.  
Flatness now follows from the the fact 
that $\Wr$ is a finite, projective morphism of non-singular 
varieties~\cite[Ch. III, Exer. 9.3(a)]{Har}.
\end{proof}


\section{Jeu de taquin theory revisited}
\label{sec:jdt}

\subsection{Standard Young tableaux with values in $\FF$}

A \bfdef{skew partition diagram} $\lambda/\mu$ is a difference of 
partition diagrams $\lambda$ and $\mu$, where $\lambda \geq \mu$.
Let $\lambda/\mu$ be a skew partition diagram which
fits inside a $d \times (n{-}d)$ rectangle, i.e. for which 
$\lambda, \mu \in \Lambda$.  
We write $\mu^c$ for the skew partition $\Rect/\mu$,
and $\mu^\vee := (n{-}d{-}\mu_d \geq \dots \geq n{-}d{-}\mu_1)$ for the partition diagram obtained by rotating $\mu^c$
by $180^\circ$.
As with partitions, $|\lambda/\mu| := |\lambda| - |\mu|$ is the number 
of boxes in $\lambda/\mu$.  

By an ordinary \bfdef{standard Young tableau} of shape $\lambda/\mu$,
we will mean the usual notion:
a filling of the boxes of $\lambda/\mu$ with entries 
$1, \dots, |\lambda/\mu|$, each used once, where the entries
increase along rows and down columns.  The set of all such tableaux
is denoted $\ordSYT(\lambda/\mu)$.
We assume some basic familiarity with the combinatorics of tableaux,
and refer the reader to~\cite{Ful}.

For our purposes, it will be convenient to have a slightly enhanced
notion of a standard Young tableau on $\lambda/\mu$.
Let $\FF$ be a field, with a norm 
$\|\cdot\| : \FF \to \RR_{\geq 0} \cup \{+\infty\}$
that is multiplicative and satisfies the triangle inequality.  
We extend $\|\cdot\|$ to $\FP^1$ by setting $\|\infty\| = +\infty$.
Let
$\bolda = \{a_1, \dots, a_{|\lambda/\mu|}\} \subset \FP^1$ be a subset of 
cardinality $|\lambda/\mu|$.  
We think of $\bolda$ as a multiset, whose elements happen to be distinct.
We impose the following restrictions, which will appear throughout this
section and Section~\ref{sec:labelling}: 
\begin{enumerate}
\item[\I] For all pairs of elements $\{a_i, a_j\}$ with 
$i \neq j$, we have $\|a_i\| \neq \|a_j\|$.
\item[\II] If $\mu \neq \varnothing$, then $0 \notin \bolda$.
\item[\III] If $\lambda \neq \Rect$, then $\infty \notin \bolda$.
\end{enumerate}
For many of our purposes $\lambda/\mu$ will be the entire rectangle 
$\Rect$, in which case restrictions~\II and~\III are irrelevant.

\begin{definition} \rm
\label{def:SYT}
A \bfdef{standard Young tableau} with \bfdef{values} in $\bolda$ and 
\bfdef{shape} $\lambda/\mu$
is a filling of the boxes of $\lambda/\mu$ with the elements of $\bolda$, 
where each element is used once and the norm of the entries is 
increasing along rows and down columns.
The set of all standard Young tableaux with values in $\bolda$ and
shape $\lambda/\mu$ is
denoted $\SYT(\lambda/\mu; \bolda)$.
\end{definition}

Let $T \in \SYT(\lambda/\mu; \bolda)$.
By replacing the smallest entry (in norm) of $T$ by $1$, the 
second smallest by $2$, and so forth, we obtain an ordinary
standard Young tableau.  We denote this tableau by 
$\ord(T) \in \ordSYT(\lambda/\mu)$.

\subsection{Sliding}
\label{sec:definesliding}

We now introduce an operation on our enhanced standard Young tableaux, 
called {\em sliding}.
To define sliding, we must assume $\FF=\RR$, with norm 
$\|\cdot\| = |\cdot|$. 

Let $T_0 \in \SYT(\lambda/\mu; \bolda_0)$.  
We can imagine $\bolda_0$ varying continuously along a path $\bolda_t$,
$t \in [0,1]$ in the space 
of $|\lambda/\mu|$-element multisubsets of $\RP^1$.  If we insist 
that $\bolda_t$ satisfy restrictions \I-\III above for all $t$, 
then $\bolda_t$ 
is in fact always a set, and there is a canonical way
to define a tableau $T_t \in \SYT(\lambda/\mu; \bolda_t)$ over the point 
$\bolda_t$, namely so 
that the entries of the family $T_t$ vary continuously, or equivalently 
so that $\ord(T_t)$ is independent of $t$.

We wish to extend this definition of $T_t$ for paths $\bolda_t$ that 
include multisets and violations of restriction~\I.  (It is
tempting to relax restrictions~\II and~\III also; unfortunately, 
this does not lead to well-behaved combinatorial structures.)
The tableau $T_t$ will not be defined at
these points of violation, but it will be defined at all other points.

First suppose $\bolda_t$, $t \in [0,1]$ is
a {\em generic}
smooth path
in the space of $|\lambda/\mu|$-element multisubsets of
$\RP^1$.  A generic path may be assumed to have the following form.
For every $t \in [0,1]$, $\bolda_t$ is a set, and 
at finitely many points $t_1, \dots, t_l \in (0,1)$ there will be a violation 
of restriction~\I of the mildest possible sort: namely,
$\bolda_{t_i} = \{a_1, \dots, a_{|\lambda/\mu|}\}$ with
$a_1 = -a_2 \notin \{0, \infty\}$, and restriction~\I holds for 
all other pairs of elements $\{a_i,a_j\} \neq \{a_1,a_2\}$.
Other sorts of violations of restriction~\I, such as multisets,
do not arise generically, as they can be avoided by perturbing the 
path (see Example~\ref{ex:positivesliding}).

In this case we define $T_t$ for $t$ near $t_i$ as follows.
If $a_1$ and $a_2$ are not in the same row or column
define $T_t$ by changing the entries continuously.
If $a_1$ and $a_2$ 
are in the same row or column define $T_t$ so that $\ord(T_t)$ is 
independent of $t$ in a neighbourhood of $t_i$.
(Note that in the former case, $\ord(T_t)$ will normally change at $t = t_i$;
in the latter case, the entries of $T_t$ will normally be discontinuous 
at $t=t_i$.)
Another way to think of
this is that $a_1$ and $a_2$ swap places if and only if they 
are forced to swap in order to maintain row and column 
strictness in the tableau.

\begin{definition} \rm
\label{def:slide}
Let
$\bolda, \bolda' \subset \RP^1$ be $|\lambda/\mu|$-element subsets
satisfying restrictions \I-\III above,
which can be joined by a path satisfying restrictions~\II and~\III.
Define 
$\slide_{\bolda'}: \SYT(\lambda/\mu; \bolda) \to \SYT(\lambda/\mu; \bolda')$
as follows.  If 
$T_0 \in \SYT(\lambda/\mu; \bolda)$, 
$\slide_{\bolda'}(T_0)$ is the tableau $T_1$ obtained by following
$T_0$ over any generic smooth path $\bolda_t$ interpolating 
$\bolda_0 = \bolda$ and $\bolda_1 = \bolda'$.
\end{definition}

\begin{theorem}
\label{thm:slidewelldefined}
The tableau $\slide_{\bolda'}(T)$ depends only on the homotopy
class of the path $\bolda_t$ in Definition~\ref{def:slide}.
\end{theorem}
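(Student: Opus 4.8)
The plan is to show that the result of sliding a tableau along a generic smooth path depends only on the endpoints and the homotopy class, by a standard ``homotopy lifting'' argument carried out purely combinatorially. First I would observe that the space $\mathcal{U}$ of $|\lambda/\mu|$-element multisubsets of $\RP^1$ satisfying restrictions \II and \III, with the ``bad locus'' $\mathcal{B}$ (multisets, or sets violating \I) removed, is an open submanifold whose fundamental groupoid is what we need to understand; and that $\slide$ furnishes a representation of paths in this space (modulo reparametrization) by bijections $\SYT(\lambda/\mu;\bolda) \to \SYT(\lambda/\mu;\bolda')$. Two generic paths with the same endpoints that are homotopic (rel endpoints, through paths satisfying \II and \III) can be connected by a finite sequence of elementary moves: small perturbations supported away from $\mathcal{B}$, which manifestly do not change $\slide$ since $\ord(T_t)$ is locally constant there; and local ``Reidemeister-type'' moves near the codimension-one and codimension-two strata of $\mathcal{B}$. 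So the theorem reduces to checking invariance of $\slide$ under each such local move.

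The key steps, in order, are: (1) set up $\mathcal{B}$ as a real algebraic subset of codimension $\geq 1$ in $\mathcal{U}$, identify its smooth codimension-one part (generic violations of \I of the form $a_1 = -a_2 \notin\{0,\infty\}$) and enumerate the codimension-two strata through which a generic homotopy must pass; (2) verify that a generic homotopy between two generic paths crosses $\mathcal{B}$ only transversally along the codimension-one stratum, except at finitely many isolated times where it touches a codimension-two stratum, and that between these the combinatorial type of the crossing pattern is constant; (3) check the ``cancellation'' move: if a path crosses the codimension-one stratum $\{a_1=-a_2\}$ and then immediately crosses back, the composite $\slide$ is the identity — this is immediate from the definition, since in the ``same row/column'' case $\ord(T_t)$ is unchanged throughout and in the ``not same row/column'' case the entries vary continuously, so reversing undoes the change; (4) check the codimension-two ``commutation/triangle'' moves, where two independent swaps can be performed in either order, or where a triple coincidence $a_1,a_2,a_3$ (with $a_1=-a_2$, $a_2=-a_3$ forced to be inconsistent, so really pairwise norm-coincidences clustering) is resolved — here one must see that the two ways of pushing the path past the stratum induce the same bijection on tableaux. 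The finitely many combinatorial cases here are: swaps of disjoint pairs of boxes (obviously commute), and swaps of pairs sharing a box, where the ``forced to maintain strictness'' rule must be shown to give a consistent answer regardless of the order; this is where the jeu de taquin ``diamond lemma'' type reasoning enters.

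I expect step (4) — the codimension-two analysis, i.e.\ the local confluence at the worst strata of $\mathcal{B}$ — to be the main obstacle. Everything else is soft: the homotopy-lifting framework is formal once $\slide$ is known to be well-defined along generic paths (which Definition~\ref{def:slide} and the surrounding discussion give us), and the codimension-one invariance (step 3) is trivial. The real content is that when several of the coincidences $\|a_i\| = \|a_j\|$ happen ``at once'' in a codimension-two family, the locally defined $\slide$ maps glue consistently; concretely one must rule out a monodromy around a codimension-two stratum. A clean way to organize this is to note that near such a stratum the relevant entries of the tableau occupy a small connected sub-shape (at most three boxes, in at most an ``L'' or a row/column of length three), reduce to these small cases by the fact that entries far away in norm are inert, and then check each small case by hand — the $2$-box cases are empty of content, the $3$-box cases amount to the associativity of transpositions generating $S_3$ acting on a column, row, or hook, which is a finite verification. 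An alternative, if one wants to avoid the stratification bookkeeping, is to prove the theorem by exhibiting $\slide$ as the monodromy of the Wronski map (which Theorem~\ref{thm:ssconj} guarantees is well-defined on homotopy classes, being the monodromy of a covering over the reduced locus) via the correspondence developed later in the paper; but since that correspondence has not yet been established at this point in the text, the self-contained combinatorial argument sketched above is the appropriate one here.
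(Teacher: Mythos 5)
Your proposal takes a genuinely different route from the paper's. The paper proves Theorem~\ref{thm:slidewelldefined} by a one-line forward reference: it follows from Theorem~\ref{thm:geomslide} (which identifies sliding with lifting paths through the Wronski map) together with Corollary~\ref{cor:nomonodromy} (which, using Theorem~\ref{thm:ssconj}, shows that monodromy is trivial along loops of real polynomials with $N$ distinct real roots). You considered this route and set it aside on the grounds that the geometric correspondence ``has not yet been established at this point in the text''; but the paper simply states the theorem here and defers the proof, which is perfectly valid, and that is exactly what it does. What you sketch instead is the self-contained combinatorial argument, which the paper acknowledges at the end of Section~\ref{sec:classicjdt} (citing tableau switching \cite{BSS}) is available but does not carry out. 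The trade-off is that the paper's argument is immediate modulo the deep analytic input behind Theorem~\ref{thm:ssconj}, whereas yours is elementary and self-contained, at the cost of some stratification bookkeeping.

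That said, your plan is incomplete at precisely the step you flag as the main obstacle, and your diagnosis of what that step requires is off. Once multisets are excluded (they form a boundary of the space of sets, which a generic homotopy avoids or at worst grazes at inert same-sign collisions), the codimension-two strata of the bad locus are exactly the intersections $\{a_i=-a_j\}\cap\{a_k=-a_l\}$ with $\{i,j\}\cap\{k,l\}=\emptyset$. Overlapping intersections such as $\{a_i=-a_j\}\cap\{a_j=-a_k\}$ force $a_i=a_k$, a multiset, and hence do not occur; indeed three distinct elements of $\RR^\times$ cannot share an absolute value, since there are only two real numbers of a given positive modulus. Consequently the ``triple coincidence'' / $S_3$ / braid / hook verifications you anticipate in step~(4) never arise. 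The only local moves a generic homotopy encounters are the tangency cancellation you handle in step~(3) and the commutation of swaps of two disjoint pairs of adjacent-in-norm entries, both of which are immediate. You should make this explicit rather than leave step~(4) as a conjectured ``finite verification''; once you do, the combinatorial argument does go through.
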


\begin{proof}
This will be an immediate consequence of 
Theorem~\ref{thm:geomslide} (below) and Corollary~\ref{cor:nomonodromy}.
\end{proof}

For the main applications we consider in this paper, there will be  
additional constraints on our paths, which ensure that the homotopy
class of $\bolda_t$ in Definition~\ref{def:slide} is unique.  For this
reason, we have chosen to suppress the dependence on this homotopy class 
from our notation.  
In general, changing the homotopy class of $\bolda_t$ does have a non-trivial 
effect (see Remark~\ref{rmk:evacuation}).

In light of Theorem~\ref{thm:slidewelldefined}, 
the path $\bolda_t$ does not need to be 
generic in order to define $T_t$.  We simply put 
$T_t := \slide_{\bolda_t}(T_0)$.

\begin{example} \rm
\label{ex:positivesliding}
\newcommand{\smallskewdiagram}
{
 \begin{picture}(50,50)(3,0)
 \put(25,50){\line(1,0){25}}
 \put(0,25){\line(1,0){50}}
 \put(0,0){\line(1,0){25}}
 \put(0,0){\line(0,1){25}}
 \put(25,0){\line(0,1){50}}
 \put(50,25){\line(0,1){25}}
\end{picture}
}
Let $\bolda_t = \{\pcolor{1+2t}, \pcolor{2}\}$ for $t \in [0,1]$, and let 
$$\raisebox{20pt}{$T_0 = \ \ $}
\begin{picture}(50,50)(0,0)
 \put(25,25){\te{1}}
 \put(0,0){\te{2}}
 \put(0,0){\smallskewdiagram}
\end{picture} 
\raisebox{20pt}{\ \ .}
$$
The path $\bolda_t$ is not generic,
since $\bolda_t$ is a multiset when $t=\frac{1}{2}$;
however by perturbing the path
slightly to avoid this behaviour (see Figure~\ref{fig:perturbedpath}), 
we see that
$$T_t = \slide_{\bolda_t}(T_0) = 
\begin{cases}
\begin{picture}(50,50)(0,0)
 \put(25,25){\te{\small $1\!{+}\!2t$}}
 \put(0,0){\te{2}}
 \put(0,0){\smallskewdiagram}
\end{picture} 
& \quad \raisebox{20pt}{if $0 \leq t<\frac{1}{2}$} \medskip \\
\begin{picture}(50,50)(0,0)
 \put(25,25){\te{2}}
 \put(0,0){\te{\small $1\!{+}\!2t$}}
 \put(0,0){\smallskewdiagram}
\end{picture} 
& \quad \raisebox{20pt}{if $\frac{1}{2} < t \leq 1$\,.}
\end{cases}
$$
\begin{figure}[tbp]
\label{fig:perturbedpath}
\begin{center}
\input{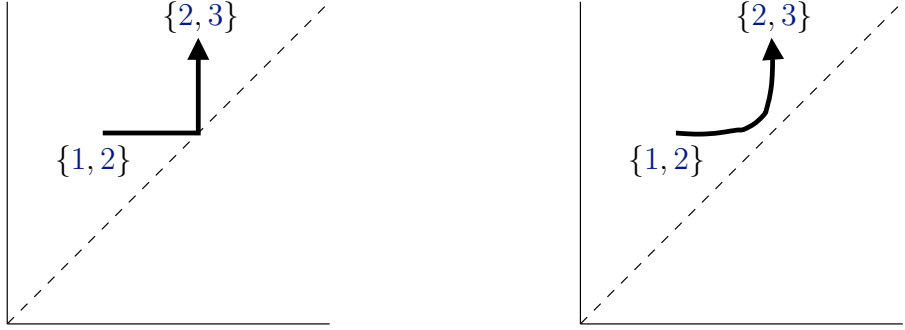}
\caption{The path $\bolda_t$ in Example~\ref{ex:positivesliding} (left),
and a slight perturbation (right).}
\end{center}
\end{figure}
Note that $\ord(T_t)$ is independent of $t$; this will always
be the case when the entries all have the same sign.  For an
illustration of the 
case with mixed signs, see Example~\ref{ex:sliding}.
\end{example}

We can now state one of our main theorems, which relates the operation of 
sliding to the Wronski map.  

\begin{theorem}
\label{thm:geomslide}
For $\bolda \subset \RR$ satisfying restriction~\I, there is a 
correspondence $x \leftrightarrow T_x$ between points 
$x \in X(\bolda)$ and tableaux 
$T_x \in \SYT(\lambda/\mu; \bolda)$.  
Under this correspondence, if $\bolda_t \Subset \RP^1$, $t \in [0,1]$
is a generic real path satisfying restrictions~\II and~\III,
and $x_t \in X(\bolda_t)$ is any lifting of $\bolda_t$ to $X$, then 
$T_{x_1} = \slide_{\bolda_1}(T_{x_0})$.  
\end{theorem}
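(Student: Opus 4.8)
The plan is to establish the correspondence $x \leftrightarrow T_x$ by an asymptotic (degeneration) analysis of the Pl\"ucker coordinates, and then to check that this bookkeeping is compatible with the continuity prescription defining $\slide$. First I would set up the correspondence for a single subset $\bolda \subset \RR$ satisfying restriction~\I. Write $\bolda = \{a_1,\dots,a_N\}$ with $|a_1| < |a_2| < \cdots < |a_N|$ (using~\I), and consider the Pl\"ucker relation $\Wr(x;z) = \sum_{\lambda} q_\lambda p_\lambda(x) z^{|\lambda|}$ from Proposition~\ref{prop:wrplucker}. Since $x \in X(\bolda)$ means $\prod_i(z+a_i) = \Wr(x;-z)$ up to scalar, the coefficients of $\Wr$ are (up to a common scalar) the elementary symmetric functions $e_k(\bolda)$, so $q_\lambda p_\lambda(x)$ is proportional to $e_{|\lambda|}(\bolda)$ for every $\lambda \vdash |\lambda|$. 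The key idea, following the Puiseux-series heuristic sketched in the introduction, is to make this \emph{quantitative}: after rescaling the $a_i$ so that $|a_i| \approx u^{c_i}$ for a suitable parameter $u \to 0$ and exponents $c_1 > c_2 > \cdots$, the leading term of $e_k(\bolda)$ is governed by the $k$ smallest-in-norm elements, and a standard Laplace-expansion / Lindstr\"om--Gessel--Viennot style argument shows that the leading term of $p_\lambda(x)$ singles out exactly one way of placing $a_1,\dots,a_{|\lambda|}$ into the skew shape $\lambda/\mu$ respecting rows and columns --- i.e. a tableau in $\SYT(\lambda/\mu;\{a_1,\dots,a_{|\lambda|}\})$. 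Running this over all $\lambda$ and using that $X(\bolda)$ is reduced (Theorem~\ref{thm:ssconj}) with $|X(\bolda)| = |\ordSYT(\Rect)|$, one gets that $x \mapsto T_x$ is well-defined and bijective; restrictions~\II and~\III are exactly what is needed to avoid the degenerate roots $0$ and $\infty$ breaking this count.

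Next I would address the monodromy statement. Let $\bolda_t$ be a generic real path satisfying~\II,~\III, and $x_t$ a lifting; by Theorem~\ref{thm:ssconj} each $X(\bolda_t)$ is reduced away from the finitely many bad times $t_1,\dots,t_l$, so the lifting is well-defined and continuous on each subinterval. On any subinterval where restriction~\I holds throughout, the norm-ordering of $\bolda_t$ is constant, the Pl\"ucker coordinates vary continuously and never all vanish, so the leading-term combinatorics is locally constant: $T_{x_t}$ has $\ord(T_{x_t})$ constant and entries varying continuously, which is precisely the continuity clause in the definition of $\slide$. So everything reduces to analyzing what happens across a single bad time $t_i$, where $\bolda_{t_i}$ has $a_1 = -a_2 \notin \{0,\infty\}$ and~\I holds for all other pairs. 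Here I would compare the two local sheets: for $t$ slightly less and slightly more than $t_i$, examine which $p_\lambda(x_t)$ vanish and which do not. Crucially, $a_1$ and $a_2$ occupy the two cells that are "swappable'' in the sense of the tableau; when they lie in the same row or column, the tableau constraints force their relative position, so $\ord(T_{x_t})$ is forced to stay put (entries discontinuous), whereas when they lie in neither the same row nor the same column, the asymptotic analysis shows $p_\lambda(x_t)$ for the $\lambda$ with $|\lambda|$ between the two ranks changes which leading monomial dominates, so $\ord(T_{x_t})$ jumps by exactly the transposition of those two values --- matching the two cases in the definition of $T_t$ near $t_i$ in Section~\ref{sec:definesliding}. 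Composing across all $t_i$ gives $T_{x_1} = \slide_{\bolda_1}(T_{x_0})$.

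The main obstacle, and the step that will require real work rather than routine checking, is making the leading-term analysis rigorous and uniform: one must show that for a single $x \in X(\bolda)$ the collection of nonvanishing Pl\"ucker coordinates and their relative sizes really do encode a \emph{consistent} tableau (not merely that each $p_\lambda$ individually picks out some filling of $\lambda/\mu$), and that as $\bolda$ varies this picture degenerates in the claimed way at $a_1 = -a_2$. The clean way to do this is to work over the Puiseux series field $\puiseux{u}$: substitute $a_i \rightsquigarrow a_i u^{c_i}$ with generic positive reals $a_i$ and real exponents $c_1 > \cdots > c_N$ encoding the norm order, solve $X(\bolda)$ over $\puiseux{u}$ exactly (the fibre is still reduced with the same cardinality), read off the valuations and leading coefficients of the $p_\lambda$, and argue that the tableau is exactly the combinatorial gadget recording which subset of $\{c_1,\dots,c_N\}$ can be "packed'' into $\lambda/\mu$ with minimal total weight. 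Specializing $u$ to a small real number then transfers the statement to $X(\bolda) \subset X(\RR)$. I expect this is exactly the content of Section~\ref{sec:labelling} referred to in the outline, so in the actual proof I would simply invoke that section's main labelling result and then carry out the comparatively short local analysis at the bad times $t_i$; but if one were proving Theorem~\ref{thm:geomslide} from scratch, the Puiseux-series labelling is where essentially all the effort lies.
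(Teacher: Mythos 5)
Your overall strategy matches the paper's: the labelling $x \leftrightarrow T_x$ is indeed established by a Puiseux-series degeneration (Section~\ref{sec:labelling}, Theorem~\ref{thm:leadtermequations} and its corollaries), and the monodromy statement is reduced to a local analysis at the finitely many bad times $t_i$ where restriction~\I fails with $a_1 = -a_2$. The first half of your proposal is essentially a correct summary of how Corollaries~\ref{cor:psfibres}, \ref{cor:complexfibres}, \ref{cor:realfibres} produce a well-defined bijection.

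However, the step you describe as ``comparatively short'' --- the local analysis at the bad crossings --- is where the real argument lives, and your sketch of it has a genuine gap. You write that you would ``examine which $p_\lambda(x_t)$ vanish and which do not,'' but generically \emph{no} Pl\"ucker coordinate vanishes across a crossing; the issue is not vanishing but which of the two leading-term branches the continuous lift $x_t$ follows. Near a bad time, the two tableaux $T$ and $T'$ obtained by transposing $a_k, a_{k+1}$ give rise to the \emph{same} system of equations for the leading coefficients, namely the quadratic~\eqref{eqn:quadraticforomega} in $\omega_k$, which has two solutions corresponding to two distinct points of the fibre. To conclude that $x_t$ continues to the correct tableau you must show (a) that the two solution branches never collide along the real path --- otherwise the lift could jump sheets --- and (b) which branch corresponds to which tableau at $t=1$. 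The paper proves (a) via a non-trivial computation: Lemma~\ref{lem:distancecalculation} shows the discriminant factor $\frac{q_{\alpha_k}q_{\alpha_k'}}{q_{\alpha_{k-1}}q_{\alpha_{k+1}}}$ equals $1 - L^{-2} < 1$, which gives Corollary~\ref{cor:tworootsreal} (the two roots are always real and distinct when $c_k, c_{k+1} \in \RR$). It then settles (b) in Theorem~\ref{thm:simpleslide} by a sign argument on $\omega_k$, split into cases according to whether $c_k c_{k+1}$ is positive or negative, which is exactly what distinguishes the ``swap positions / $\ord$ constant'' outcome from the ``keep positions / $\ord$ transposed'' outcome. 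Your phrase ``the asymptotic analysis shows\dots which leading monomial dominates'' assumes the conclusion without supplying the argument that selects the correct branch, and omits both the discriminant computation and the sign analysis. That is precisely the content of Section~\ref{sec:tworoots} and Theorem~\ref{thm:simpleslide}, and without it the monodromy claim remains unproved.
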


A precise statement of the correspondence is given in 
Section~\ref{sec:labelling}, and the proof is given in 
Section~\ref{sec:monodromy}.

\subsection{Subtableaux and jeu de taquin}
\label{sec:classicjdt}

We now explain the connection between the sliding operation of
Definition~\ref{def:slide} and the usual notion of a slide
in jeu de taquin theory.

Let $\lambda/\mu$ be a skew partition, and let 
$\bolda = \{a_1, \dots, a_{|\lambda/\mu|}\} \subset \RP^1$, with
$|a_1| < \dots < |a_{|\lambda/\mu|}|$.  
Let $\boldb \subset \bolda$.
If $T \in \SYT(\lambda/\mu; \bolda)$, we denote the set of
boxes of $T$ whose entries are in $\boldb$ by $T|_\boldb$.
If $\boldb = \{a_i, a_{i+1}, \dots, a_j\}$ for some $i<j$, then 
$T|_\boldb$ is a standard Young tableau with values in $\boldb$
of some shape $\lambda'/\mu'$.  In this case we say $T|_\boldb$ is 
a \bfdef{subtableau} of $T$, and we also denote this
subtableau by $T|_{\lambda'/\mu'}$.

Let $\boldb = \{a_1, \dots, a_j\}$, and 
$\boldc = \{a_{j+1}, \dots, a_{|\lambda/\mu|}\}$.  Suppose that
all elements of $\boldb$ are positive
and that all elements in $\boldc$ are negative.
Let $a'_1, \dots, a'_j$ be positive real numbers such that
$ a'_1 > \dots > a'_j > -a_{|\lambda/\mu|}$, and set
$\boldb' = \{a'_1, \dots, a'_j\}$ and $\bolda' = \boldb' \cup \boldc$.
Note that the elements of $\boldb$ are smaller in absolute value than
the elements of $\boldc$, which are in turn smaller than those of
$\boldb'$.

In a mild abuse of notation, define
$$\slide_{T|_\boldb}(T|_\boldc) := \slide_{\bolda'}(T)|_\boldc\,.$$
By switching signs everywhere, we can also perform this construction 
if the elements of $\boldb$ are negative and the elements of $\boldc$
are positive.
Similarly, we define $\slide_{T|_\boldc}(T|_\boldb)$ 
by reversing the roles
of $\boldb$ and $\boldc$ (and reversing the inequalities) in this
construction.

Suppose $T \in \SYT(\lambda/\mu; +)
:= \bigcup_{\bolda \subset \RR_+} 
\SYT(\lambda/\mu; \bolda)$ is a standard Young tableau with all 
positive real entries
(or $T \in \SYT(\lambda/\mu; -) 
:= \bigcup_{\bolda \subset \RR_-} 
\SYT(\lambda/\mu; \bolda)$).
We can think of $\slide_T$ as an operation on tableaux which 
takes as input any skew tableau $U$ with all negative (resp. positive) 
entries that can be placed adjacent to $T$ 
to form a larger tableau,
and returns a tableau with the same entries but different shape.

If the shape of $T$ consists of a single box, it is not hard to see 
that $\slide_T(U)$ performs a Sch\"utzenberger slide or a reverse
slide through $U$ using the box of $T$ (see Example~\ref{ex:sliding}).
More generally, $\slide_T$ is 
the operation of
performing a sequence of slides in the order dictated
by the entries of $T$.  
If $T' = \slide_U(T)$, and $U' = \slide_T(U)$
then the pair $(\ord(T'), \ord(U'))$ is the result of applying
{\em tableau switching}
to the pair $(\ord(T), \ord(U))$, 
(see \cite{BSS} and the references therein).  Arguments that show that
tableau switching is well defined and independent of a number of
choices can be used to give a combinatorial proof of 
Theorem~\ref{thm:slidewelldefined}.

%
%
\begin{example} \rm
\label{ex:sliding} \rm
\newcommand{\exskewdiagram}
{
 \begin{picture}(100,75)(3,0)
 \put(25,75){\line(1,0){75}}
 \put(0,50){\line(1,0){100}}
 \put(0,25){\line(1,0){100}}
 \put(0,0){\line(1,0){50}}
 \put(0,0){\line(0,1){50}}
 \put(25,0){\line(0,1){75}}
 \put(50,0){\line(0,1){75}}
 \put(75,25){\line(0,1){50}}
 \put(100,25){\line(0,1){50}}
 \end{picture}
}
\newcommand{\exleftskewdiagram}
{
 \begin{picture}(100,75)(3,0)
 \put(25,75){\line(1,0){75}}
 \put(0,50){\line(1,0){100}}
 \put(0,25){\line(1,0){50}}
 \put(0,0){\line(1,0){25}}
 \put(0,0){\line(0,1){50}}
 \put(25,0){\line(0,1){75}}
 \put(50,25){\line(0,1){50}}
 \put(75,50){\line(0,1){25}}
 \put(100,50){\line(0,1){25}}
 \end{picture}
}
\newcommand{\exrightskewdiagram}
{
 \begin{picture}(100,75)(3,0)
 \put(50,50){\line(1,0){50}}
 \put(25,25){\line(1,0){75}}
 \put(25,0){\line(1,0){25}}
 \put(25,0){\line(0,1){25}}
 \put(50,0){\line(0,1){50}}
 \put(75,25){\line(0,1){25}}
 \put(100,25){\line(0,1){25}}
 \end{picture}
}
\newcommand{\extableau}[9]
{ 
 \begin{picture}(100,75)(0,0)
 \put(25,50){#1}
 \put(50,50){#2}
 \put(75,50){#3}
 \put(0,25){#4}
 \put(25,25){#5}
 \put(50,25){#6}
 \put(75,25){#7}
 \put(0,0){#8}
 \put(25,0){#9}
 \put(0,0){\exskewdiagram}
 \end{picture}
}
\newcommand{\exlefttableau}[6]
{ 
 \begin{picture}(100,75)(0,0)
 \put(25,50){#1}
 \put(50,50){#2}
 \put(75,50){#3}
 \put(0,25){#4}
 \put(25,25){#5}
 \put(0,0){#6}
 \put(0,0){\exleftskewdiagram}
 \end{picture}
}
\newcommand{\exrighttableau}[3]
{ 
 \begin{picture}(100,75)(0,0)
 \put(50,25){#1}
 \put(75,25){#2}
 \put(25,0){#3}
 \put(0,0){\exrightskewdiagram}
 \end{picture}
}
Let $\boldb = \{\pcolor{1},\pcolor{2},\pcolor{5}\}$, 
$\boldc = \{\ncolor{-7},\ncolor{-10},\ncolor{-13},\ncolor{-16},
\ncolor{-19},\ncolor{-22}\}$.
Let $T$ be the standard Young tableau with values in 
$\bolda = \boldb \cup \boldc$ shown below.  
$$
\raisebox{35pt}{$T =\ \ $}
\extableau                               %
          {\te{5}}  {\ue{-7}} {\ue{-16}} %
{\te{1}}  {\ue{-10}}{\ue{-13}}{\ue{-22}} %
{\te{2}}  {\ue{-19}}                     %
$$
We compute $\slide_{T|_\boldb}(T|_\boldc)$,
by increasing the entries of $\boldb$ one at a time until we reach
$\boldb' = \{\pcolor{23},\pcolor{24},\pcolor{25}\}$.  
The order in which we do this does not affect the
answer.
We choose to begin by increasing the entry $\pcolor{1}$, shown 
highlighted below.  As its value climbs past
the other positive entries in the tableau it swaps places with them, 
hence $\ord(T)$ does not change (see Example~\ref{ex:positivesliding}).
\begin{center}
\extableau                               %
          {\te{5}}  {\ue{-7}} {\ue{-16}} %
{\TE{1}}  {\ue{-10}}{\ue{-13}}{\ue{-22}} %
{\te{2}}  {\ue{-19}}                     %
$\quad \raisebox{36pt}{$\rightarrow$} \quad$
\extableau                               %
          {\te{5}}  {\ue{-7}} {\ue{-16}} %
{\te{2}}  {\ue{-10}}{\ue{-13}}{\ue{-22}} %
{\TE{3}}  {\ue{-19}}                     %
$\quad \raisebox{36pt}{$\rightarrow$} \quad$
\extableau                               %
          {\TE{6}}  {\ue{-7}} {\ue{-16}} %
{\te{2}}  {\ue{-10}}{\ue{-13}}{\ue{-22}} %
{\te{5}}  {\ue{-19}}                     %
\end{center}
As the highlighted entry continues to increase, it switches places with
the next smallest negative entry if only if the two entries are 
adjacent, thereby 
performing a Sch\"utzenberger slide through $T|_\boldc$.
\begin{center}
\extableau                               %
          {\TE{6}}  {\ue{-7}} {\ue{-16}} %
{\te{2}}  {\ue{-10}}{\ue{-13}}{\ue{-22}} %
{\te{5}}  {\ue{-19}}                     %
$\quad \raisebox{36pt}{$\rightarrow$} \quad$
\extableau                               %
          {\ue{-7}} {\TE{8}}  {\ue{-16}} %
{\te{2}}  {\ue{-10}}{\ue{-13}}{\ue{-22}} %
{\te{5}}  {\ue{-19}}                     %
$\quad \raisebox{36pt}{$\rightarrow$} \quad$
\extableau                               %
          {\ue{-7}} {\TE{11}} {\ue{-16}} %
{\te{2}}  {\ue{-10}}{\ue{-13}}{\ue{-22}} %
{\te{5}}  {\ue{-19}}                     %

\bigskip \bigskip

$\raisebox{36pt}{$\rightarrow$} \quad$
\extableau                               %
          {\ue{-7}} {\ue{-13}}{\ue{-16}} %
{\te{2}}  {\ue{-10}}{\TE{14}} {\ue{-22}} %
{\te{5}}  {\ue{-19}}                     %
$\quad \raisebox{36pt}{$\rightarrow$} \quad$
\extableau                               %
          {\ue{-7}} {\ue{-13}}{\ue{-16}} %
{\te{2}}  {\ue{-10}}{\TE{20}} {\ue{-22}} %
{\te{5}}  {\ue{-19}}                     %
$\quad \raisebox{36pt}{$\rightarrow$} \quad$
\extableau                               %
          {\ue{-7}} {\ue{-13}}{\ue{-16}} %
{\te{2}}  {\ue{-10}}{\ue{-22}}{\TE{25}}  %
{\te{5}}  {\ue{-19}}                     %
\end{center}
Next we increase the entry $\pcolor{5}$ until it is larger than $22$.
\begin{center}
\extableau                               %
          {\ue{-7}} {\ue{-13}}{\ue{-16}} %
{\te{2}}  {\ue{-10}}{\ue{-22}}{\te{25}}  %
{\TE{5}}  {\ue{-19}}                     %
$\quad \raisebox{36pt}{$\rightarrow$} \quad$
\extableau                               %
          {\ue{-7}} {\ue{-13}}{\ue{-16}} %
{\te{2}}  {\ue{-10}}{\ue{-22}}{\te{25}}  %
{\ue{-19}}{\TE{20}}                      %
$\quad \raisebox{36pt}{$\rightarrow$} \quad$
\extableau                               %
          {\ue{-7}} {\ue{-13}}{\ue{-16}} %
{\te{2}}  {\ue{-10}}{\ue{-22}}{\te{25}}  %
{\ue{-19}}{\TE{24}}                      %
\end{center}
Finally we increase the entry $\pcolor{2}$.
\begin{center}
\extableau                               %
          {\ue{-7}} {\ue{-13}}{\ue{-16}} %
{\TE{2}}  {\ue{-10}}{\ue{-22}}{\te{25}}  %
{\ue{-19}}{\te{24}}                      %
$\quad \raisebox{36pt}{$\rightarrow$} \quad$
\extableau                               %
          {\ue{-7}} {\ue{-13}}{\ue{-16}} %
{\ue{-10}}{\TE{11}} {\ue{-22}}{\te{25}}  %
{\ue{-19}}{\te{24}}                      %
$\quad \raisebox{36pt}{$\rightarrow$} \quad$
\extableau                               %
          {\ue{-7}} {\ue{-13}}{\ue{-16}} %
{\ue{-10}}{\ue{-22}}{\TE{23}} {\te{25}}  %
{\ue{-19}}{\te{24}}                      %
\end{center}
Thus we find,
$$
\raisebox{35pt}{$\slide_{T|_\boldb}(T|_\boldc) =\ \ $}
\exlefttableau                           %
          {\ue{-7}} {\ue{-13}}{\ue{-16}} %
{\ue{-10}}{\ue{-22}}                     %
{\ue{-19}}                               %
\raisebox{35pt}{\ \ .}
$$
Moreover, the relative order of the positive entries in this final tableau 
tells us,
$$
\raisebox{35pt}{
$\slide_{T|_\boldc}(T|_\boldb) =\ \ $}
\exrighttableau                          %
                    {\te{1}}  {\te{5}}   %
          {\te{2}}                       %
\raisebox{35pt}{\ \ .}
$$
\end{example}
%
%

\begin{remark} \rm
\label{rmk:evacuation}
A special case of sliding is when 
$\bolda_t = \{(a_1)_t, \dots, (a_N)_t\}$ is a loop that
cyclically rotates the elements of $\bolda_0$.  
Suppose each $(a_i)_t$ is a cyclically decreasing path in $\RP^1$,
and
$$ 0 <
(a_1)_0 = (a_2)_1 <
(a_2)_0 = (a_3)_1 
< \dots <
(a_N)_0 = (a_1)_1 \,.
$$
Let $T \in \SYT(\Rect; \bolda_0)$. By sliding $T$ using the path 
$\bolda_t$, we perform one step of Sch\"utzenberger's evacuation on 
$T$: the smallest entry performs a slide through the tableau, 
becoming the largest entry.   This procedure defines a $\ZZ$-action
on standard Young tableaux.  Since every loop is homotopic to 
some power of this basic loop, the evacuation action completely 
describes the monodromy of the sliding operation on real valued 
standard Young tableaux of shape $\Rect$.  By Theorem~\ref{thm:geomslide},
this is also the monodromy of the Wronski map for real polynomials
with $N$ or $N{-}1$ distinct real roots.
\end{remark}

\subsection{Equivalence relations on tableaux}
\label{sec:equivrelations}

We will need to adopt some additional notions from ordinary jeu de 
taquin theory.

\begin{definition} \rm
If $T \in \SYT(\lambda/\mu; \pm)$, then
the \bfdef{rectification} of $T$ is defined to be 
$\rectify(T) := \slide_{U}(T)$,
where $U \in \SYT(\mu; \mp)$ 
can be placed adjacent to $T$ to form a larger standard Young tableau.
The \bfdef{rectification shape} of $T$ is the shape of $\rectify(T)$.
\end{definition}

\begin{definition} \rm
If $T \in \SYT(\lambda/\mu; \pm)$ and $T' \in \SYT(\lambda'/\mu'; \pm)$,
we say that $T$ and $T'$ are \bfdef{equivalent}, and write
$T \sim T'$, if  $\rectify(T) = \rectify(T')$.
\end{definition}

\begin{definition} \rm
We say that $T, T' \in \SYT(\lambda/\mu; \pm)$ are \bfdef{dual equivalent},
and write $T \sim^* T'$,
if $\slide_T$ and $\slide_{T'}$ are identical as operations on
tableaux.
\end{definition}

If we replace $T, T'$ by $\ord(T), \ord(T')$, these definitions
become the usual notions of rectification, equivalence \cite{Sch}, 
and dual equivalence \cite{Hai} on standard Young tableaux.
A classical theorem of Sch\"utzenberger 
states that $\rectify(T)$ does not depend on the choice of the 
tableau $U \in \SYT(\mu; \mp)$ \cite{Sch}.

It is not hard to see that if either $T \sim T'$ or $T \sim^* T'$,
then $T$ and $T'$ have the same rectification shape.  Thus it 
makes sense to speak of the rectification shape of an equivalence
class or a dual equivalence class.
The interaction between the equivalence and dual equivalence relations
is governed by the following fact:
there is a unique tableau in the intersection of any equivalence
class of tableaux with a dual equivalence class of the same rectification
shape.  

The Littlewood-Richardson rule can be formulated in a variety
of different ways.  For us, the 
formulation below in terms of dual equivalence classes is
the most convenient.

\begin{theorem}[Littlewood-Richardson rule]
\label{thm:lrrule}
The Littlewood-Richardson number 
$$c_{\mu\nu}^\lambda := \int_X [X_{\lambda^\vee}][X_\mu][X_\nu]$$
is the number of dual equivalence classes in $\ordSYT(\lambda/\mu)$
with rectification shape $\nu$.
\end{theorem}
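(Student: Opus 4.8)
The strategy is to realize the Littlewood-Richardson number $c_{\mu\nu}^\lambda = \int_X [X_{\lambda^\vee}][X_\mu][X_\nu]$ as a count of points in a degenerate fibre of the Wronski map, and then use Theorem~\ref{thm:geomslide} to translate that point count into the tableau-theoretic statement. Concretely, I would choose a multiset $\bolda \Subset \RP^1$ of the form $\bolda = \{0^{(|\mu|)}\} \cup \boldc \cup \{\infty^{(|\mu^\vee|)}\}$, where $\boldc \subset \RR$ is a set of $|\lambda/\mu|$ distinct nonzero reals, so that $|\mu| + |\lambda/\mu| + |\mu^\vee| = |\mu| + (|\lambda|-|\mu|) + (N-|\lambda|) = N$. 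Hold on — I want the three cohomology classes to be $[X_{\lambda^\vee}(0)]$, a product of $|\lambda/\mu|$ Schubert divisors at the points of $\boldc$, and $[X_\mu(\infty)]$... actually the cleanest choice puts $|\mu^\vee|$ roots at one point and $|\mu|$ roots at another (both in $\RP^1$), with the remaining $|\lambda/\mu|$ roots generic and real. By Corollary~\ref{cor:svrootsmultiplicities} and the flatness of $\Wr$ (Theorem~\ref{thm:flatfinite}), the subscheme $X(\bolda)$ is rationally equivalent to a generic fibre, so its total length is $|\ordSYT(\Rect)|$; but by Remark~\ref{rmk:ssconjparttwo} (or just by \eqref{eqn:multschubertint} together with the reducedness at the real-root points), the points $x \in X(\bolda)$ are in bijection with triples consisting of a partition $\alpha$ with $x \in X_\alpha(0)$, a partition $\beta$ with $x \in X_\beta(\infty)$, and a point in the intersection of the corresponding Schubert varieties with the divisors at $\boldc$. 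The number of such $x$ lying over a fixed pair $(\alpha,\beta)$, counted correctly, should be exactly $\int_X [X_{\alpha^\vee}][X_\beta][X_\lowBox]^{|\boldc|}$, and choosing $\alpha = \mu^\vee$, $\beta = \mu$ (so $\alpha^\vee = \mu$... I need to be careful which way $\vee$ goes) picks out $c_{\mu\nu}^\lambda$ after summing the divisor factors against the basis via Pieri.

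**Using Theorem~\ref{thm:geomslide}.** The key move is to apply the correspondence $x \leftrightarrow T_x$ between $X(\boldb)$ and $\SYT(\Rect;\boldb)$ for a genuinely generic real $\boldb \subset \RR$, and then slide $\boldb$ along a real path to the degenerate configuration $\bolda$ above. Along this path, Theorem~\ref{thm:geomslide} says the point $x_t$ of the fibre tracks a sliding tableau $T_{x_t}$. When the path drives $|\mu|$ of the roots toward $0$ and $|\mu^\vee|$ toward $\infty$ (keeping the middle block of roots generic and real, and respecting restrictions~\II--\III along the way by peeling the roots off in the right order), the statement of Theorem~\ref{thm:geomslide} degenerates: the limiting tableau records, in the block of boxes near where the "$0$-roots" land, a subtableau whose rectification shape is $\mu$; similarly near the "$\infty$-roots" a subtableau of rectification shape $\mu^\vee$; and these force $x$ to lie on $X_\mu(0) \cap X_{\mu^\vee}(\infty)$ by Theorem~\ref{thm:svroots}. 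The middle block of the tableau — the part with values in $\boldc$ — is a skew standard Young tableau of shape $\lambda/\mu$ (for the appropriate $\lambda$ determined by which Schubert cells the limit lands in), and the sliding/rectification picture shows that $x$ is determined by the \emph{dual equivalence class} of this skew tableau: dual equivalent tableaux $T|_\boldc$ are exactly those for which the sliding operations agree, which geometrically means they give the same limit point $x$. Counting: the number of points over $X_\mu(0)\cap X_{\mu^\vee}(\infty)\cap (\text{divisors})$ with a prescribed rectification shape $\nu$ for the skew part equals, by the degree computation above and Pieri's rule, the coefficient $c_{\mu\nu}^\lambda$; but it also equals the number of dual equivalence classes in $\ordSYT(\lambda/\mu)$ with rectification shape $\nu$. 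Matching these two counts gives the theorem.

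**The main obstacle.** The delicate point is the last bijection: showing that the points $x \in X_\mu(0) \cap X_{\mu^\vee}(\infty) \cap X_\lowBox(c_1)\cap\cdots\cap X_\lowBox(c_{|\lambda/\mu|})$ are in natural bijection with dual equivalence classes of skew tableaux in $\ordSYT(\lambda/\mu)$, and in particular that the geometric multiplicity of each such $x$ in the degenerate fibre $X(\bolda)$ equals the \emph{size} of its dual equivalence class (so that the total-length bookkeeping works out). This requires understanding the limiting behaviour of the labelling correspondence of Section~\ref{sec:labelling} precisely enough to see that (i) the subtableau with values in the "$0$-block" can be any element of its equivalence class — contributing one slide-orbit worth of internal freedom per point $x$ — while (ii) the skew part ranges over a full dual equivalence class, using the structural fact quoted at the end of Section~\ref{sec:equivrelations} that the intersection of an equivalence class with a dual equivalence class of the same rectification shape is a single tableau. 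In effect I would need to separate the "equivalence class" degrees of freedom (which get absorbed into the multiplicity $|\ordSYT(\mu)|\cdot|\ordSYT(\mu^\vee)|$ of the limit and match Corollary~\ref{cor:svrootsmultiplicities}) from the "dual equivalence class" degrees of freedom (which enumerate the distinct points $x$). Getting this separation clean — rather than hand-waving "the combinatorics keeps track of multiplicities" — is where the real work lies, and where Theorem~\ref{thm:geomslide} and the uniqueness-in-the-intersection fact do the heavy lifting.
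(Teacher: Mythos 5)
Your strategy is the right one, and it is essentially the approach the paper takes: degenerate a generic real fibre of the Wronski map, use the tableau labelling to track each point to a limit, and count limit points by rectification shape. But there are two concrete issues, one of substance and one of bookkeeping.

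\textbf{Keeping the middle block generic does not isolate $c_{\mu\nu}^\lambda$.}
In your set-up you collapse the $|\mu|$ smallest roots to $0$ and the $|\lambda^\vee|=N-|\lambda|$ largest to $\infty$, but keep the $|\nu|$ middle roots $\boldc$ distinct. The degenerate fibre is then supported on
$\bigcup_{\alpha,\beta} X_\alpha(0)\cap X_\beta(\infty)\cap\bigcap_{c\in\boldc} X_\lowBox(c)$,
and the piece over $(\alpha,\beta)=(\mu,\lambda^\vee)$ computes
$\int_X [X_\mu][X_{\lambda^\vee}][X_\lowBox]^{|\nu|}
 = \sum_{\nu'\vdash|\nu|}|\ordSYT(\nu')|\,c^\lambda_{\mu\nu'}$,
which is a Pieri-weighted sum over all $\nu'$ of size $|\nu|$ — Pieri's rule does not single out the $\nu$ term for you. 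To extract the one coefficient $c^\lambda_{\mu\nu}$ you must also collapse $\boldc$ to a single point $c_2$, so the limit intersection is literally
$Y=X_\mu(c_1)\cap X_\nu(c_2)\cap X_{\lambda^\vee}(c_3)$.
This is what the paper does: all three consecutive blocks $\boldb_1,\boldb_2,\boldb_3$ are sent to internal points $c_1,c_2,c_3$ (not specifically $0$ and $\infty$, though that is harmless by $\SL_2(\RR)$-equivariance). Once the middle block is collapsed, Theorem~\ref{thm:dualequivalence}(iii) says the limit of $x_T$ lands in $X_{\nu'}(c_2)$ exactly when the rectification shape of $T|_{\boldb_2}$ is $\nu'$, and this is how one prescribes $\nu$.

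\textbf{The tool carrying the weight is Theorem~\ref{thm:dualequivalence}, not Theorem~\ref{thm:geomslide}.}
You repeatedly invoke Theorem~\ref{thm:geomslide}, but that result only gives the monodromy/sliding dictionary; by itself it does not identify when two points have the same limit nor which Schubert cell the limit lands in. What you need is Theorem~\ref{thm:dualequivalence}: parts (i)--(ii) say that, with the complementary block fixed, $(x_T)_{[\boldb\to c]}=(x_{T'})_{[\boldb\to c]}$ iff $T|_\boldb\sim^* T'|_\boldb$, and part (iii) gives the Schubert cell. With that in hand, the paper's count is short: the number of $x\in X(\bolda)$ whose limit lies in $Y$ is $|S^\lambda_{\mu\nu}|$, where $S^\lambda_{\mu\nu}$ is the set of tableaux whose three block subtableaux have rectification shapes $\mu,\nu,\lambda^\vee$; on the other hand, flatness plus Corollary~\ref{cor:svrootsmultiplicities} says each point of $Y$ absorbs multiplicity $|\ordSYT(\mu)|\cdot|\ordSYT(\nu)|\cdot|\ordSYT(\lambda^\vee)|$, while Corollary~\ref{cor:dualequivsizes} (dual equivalence class size $=|\ordSYT(\nu)|$) shows this is exactly the size of each $\sim^*_2$-class in $S^\lambda_{\mu\nu}$. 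Dividing gives $c^\lambda_{\mu\nu}=|S^\lambda_{\mu\nu}/{\sim^*_2}|$, which is the statement. Your ``main obstacle'' paragraph identifies exactly this separation of equivalence-vs.-dual-equivalence degrees of freedom, so the gap is not one of intuition but of having the right prior lemmas to hand and of collapsing all three blocks rather than two.

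(Small typos: you write $|\mu^\vee|$ several times where you mean $|\lambda^\vee|=N-|\lambda|$; your arithmetic $|\mu|+|\lambda/\mu|+(N-|\lambda|)=N$ is the correct one.)
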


Alternatively, 
$c_{\mu\nu}^\lambda$ is the number of tableaux in 
$\ordSYT(\lambda/\mu)$ in any single equivalence class with 
rectification shape $\nu$.  That this statement
and Theorem~\ref{thm:lrrule} are interchangeable follows from the
relationship between equivalence and dual 
equivalence classes of
tableaux.

In Section~\ref{sec:LRrule}, we will see that rectification shape, 
equivalence, dual equivalence and many combinatorial facts
pertaining to them have natural interpretations in terms of 
the Wronski map.
Based on these, in Section~\ref{sec:lrrule} we give a new
proof of the Littlewood-Richardson rule.


\section{Labelling points in a Grassmannian by tableaux}
\label{sec:labelling}

\subsection{Fibres of the Wronski map over a non-archimedian field}

Let $\psK := \puiseux{u} = \bigcup_{n \geq 1} \CC(\!(u^{\frac{1}{n}})\!)$ 
be the field of Puiseux series over $\CC$.
In this section, we formulate a correspondence between
tableaux and 
points in the fibre of the Wronski map, working
over $\psK$.  In Section~\ref{sec:complexandreal}, we will show how this 
can be used to obtain a
correspondence over $\CC$ when the roots of the Wronskian are real. 

Let $\psX := \Gr(d, \Kpoln)$, be the Grassmannian defined over 
$\psK$.
As over $\CC$, we denote the Wronski map by
$\Wr: \psX \to \PP(\Kpol{N})$, and its fibre at 
$\prod_{a_i \neq \infty} (z+a_i)$
by $\psX(\bolda)$, where $\bolda = \{a_1, \dots, a_N\}$.
The Schubert varieties $\psX_\lambda(a)$, for $a \in \PP^1(\psK)$,
are also defined analogously.

If $g(u) = c_\ell u^\ell + \sum_{r>\ell} c_r u^r \in \psK^\times$, the 
\bfdef{valuation} of $g(u)$ is defined to be
$$\val(g(u)) := \ell\,.$$
The \bfdef{leading term} $\leadterm(g(u))$ and 
\bfdef{leading coefficient} $\leadcoeff(g(u))$
are 
$$
\leadterm(g(u)) := c_\ell u^\ell \qquad \qquad
\leadcoeff(g(u)) := [u^\ell]g(u) = c_\ell\,.$$
Additionally, we set $\val(0) := +\infty$, $\val(\infty) := -\infty$
and $\leadterm(0) := 0$.
Let $\psK_+ = \{ g(u) \in \psK \mid \val(g(u)) \geq 0\}$.

For any $0 < \varepsilon < 1$, we can define a norm on $\psK$, by
$$\|g(u)\| := \varepsilon^{\val(\tau)}\,.$$
It therefore makes sense to consider standard Young tableaux with values
in $\bolda \subset \PP^1(\psK)$.  Clearly this notion does not depend
on the choice of $\varepsilon$.  Note that in such a tableau, the
valuation of the entries {\em decreases} along rows and down columns.

Since our analysis will need to deal with cases where $\bolda$ is 
a multiset, we introduce some mild generalizations of standard Young tableaux,
called {\em weakly increasing} and {\em diagonally increasing} tableaux.
Let $\lambda/\mu$ be a skew partition fitting inside $\Rect$, and
let $\bolda  = \{a_1, \dots, a_{|\lambda/\mu|}\} \Subset \PP^1(\psK)$ 
be a $|\lambda/\mu|$-element multisubset
satisfying restrictions \II and \III, but not necessarily \I.

\begin{definition} \rm
\label{def:DIT}
A \bfdef{weakly increasing tableau} with shape $\lambda/\mu$ and
values in $\bolda$ 
is a filling of the boxes of $\lambda/\mu$
with the elements of $\bolda$ (each used as many times as its 
multiplicity) 
such that entries weakly increase in norm along
rows and down columns.
A weakly increasing tableau is \bfdef{diagonally increasing} 
if the entries are also {\em strictly} increasing in norm diagonally
right and downward.  The set of all diagonally increasing tableaux
with shape $\lambda/\mu$ and values in $\bolda$
is denoted $\DIT(\lambda/\mu; \bolda)$.
(Note that both definitions coincide with Definition~\ref{def:SYT}, 
if \I holds.)
\end{definition}

Before we can formulate the correspondence between points in $\psX$
and tableaux (Theorem~\ref{thm:leadtermequations}), 
we must introduce some notation.

For $T \in \DIT(\lambda/\mu; \bolda)$,
write 
$\val(T) := \val(a_1) + \dots + \val(a_{|\lambda/\mu|})$ for the sum of the
valuation of the entries.  
In degenerate cases where $T$ has an empty shape, $\val(T) := 0$.
The reader will note that this definition
is problematic if $0$ and $\infty$ are both entries of $T$.  
As we explain in Section~\ref{sec:zeroinfinity},
the trouble this causes is always resolvable by an appropriate
renormalization.
For now, we will state our 
results  under the assumption that $\infty \notin \bolda$.

Put
$$
\bolda^+ := \bolda \cup 
\{\underbrace{0, \dots, 0}_{|\mu|}, 
\overbrace{\infty, \dots, \infty}^{N-|\lambda|}\} \,,
$$
so that $|\bolda^+| = N$.  The reader should imagine that the extra
zeros and infinities are there to fill the boxes of $\mu$ and 
$\lambda^c$ inside $\Rect$, which do not already have entries 
from $T$ (see Theorem~\ref{thm:zeroinfinitylimit}).
Let
$$E_i(\bolda) := 
\sum_{k_1 < \dots <  k_{|\lambda/\mu|-i}} 
a_{k_1} \dotsb a_{k_{|\lambda/\mu|-i}}
$$
be the $(|\lambda/\mu|{-}i)$th elementary symmetric function, 
and put
$$e_i(\bolda) := 
[u^{\ell_i}] E_i(\bolda)
\,,$$
where $\ell_i = \min \val(a_{k_1} \dotsb a_{k_{|\lambda/\mu|-i}})$ 
is the minimum of the valuations of the terms in the sum.  Thus
$e_i(\bolda)$ equals either the leading coefficient of $E_i(\bolda)$ or $0$.

For $0 \leq i \leq |\lambda/\mu|$, define sets of partitions
$$M_i(T) := \left\{
\nu \in \Lambda \,\middle|\,%
\begin{gathered}[c]
\mu \leq \nu \leq \lambda,\quad
\nu \vdash |\mu|{+}i, \quad \text{and} \\
\val(T|_{\lambda/\nu}) \leq 
\val(T|_{\lambda/\nu'}) \text{ for all }\nu' \vdash |\mu|{+}i
\end{gathered}
\right\}\,.$$

Let $\omega_1, \dots, \omega_{|\lambda/\mu|}$ be complex variables.
Fill a skew diagram of shape $\lambda/\mu$ with entries
$\omega_1, \dots, \omega_{|\lambda/\mu|}$, 
in such a way that the position of
$\omega_i$ matches the position of $a_i$.   Let $\Omega_\nu$ 
denote the product of all
the variables $\omega_i$ which are outside of $\nu$ in this filling, 
if $\mu \leq \nu \leq \lambda$.
Put $\Omega_\nu := 0$ for all other $\nu$.  

Finally, recall the definition of $q_\nu$ from~\eqref{eqn:defq}.

\begin{theorem}
\label{thm:leadtermequations}
Let $T \in \DIT(\lambda/\mu; \bolda)$.  
Assume that $\omega_1, \dots, \omega_{|\lambda/\mu|}$ are such that
the Jacobian condition below holds:
\begin{equation}
\label{eqn:jacobiancondition}
\det J \neq 0, \quad\text{where }
J_{ij} = \frac{\partial}{\partial\omega_j} 
\sum_{\nu \in M_{i-1}(T)} q_\nu \Omega_\nu\,,
\quad i,j =1, \dots ,|\lambda/\mu|\,.
\end{equation}
There is a point $x \in \psX(\bolda^+)$  with Pl\"ucker coordinates
$[p_\nu(x)]_{\nu \in \Lambda}$ 
satisfying
\begin{equation}
\label{eqn:leadtermplucker}
\leadterm(p_\nu(x)) = \Omega_\nu u^{\val(T|_{\lambda/\nu})}
\qquad\text{for all $\nu \in \Lambda$}\,,
\end{equation}
if and only if $\omega_1, \dots, \omega_{|\lambda/\mu|}$ satisfy
\begin{equation}
\label{eqn:leadtermequations}
\sum_{\nu \in M_i(T)} q_\nu \Omega_\nu = q_\lambda e_i(\bolda)
\qquad\text{for $0 \leq i < |\lambda/\mu|$}\,.
\end{equation}
\end{theorem}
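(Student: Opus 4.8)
The plan is to recast the condition $x\in\psX(\bolda^+)$ as a system of linear equations in the Pl\"ucker coordinates of $x$ via Proposition~\ref{prop:wrplucker}, and then to read off leading terms. Write $m:=|\lambda/\mu|$, and in~\eqref{eqn:pluckerwronskian} let the index of summation range over all of $\Lambda$, so that $\Wr(x;z)=\sum_{\nu\in\Lambda}q_\nu p_\nu(x)\,z^{|\nu|}$. Since $\infty\notin\bolda$, a point $x$ lies in $\psX(\bolda^+)$ precisely when this polynomial is a scalar multiple of $z^{|\mu|}\prod_{i=1}^m(z+a_i)=\sum_{i=0}^m E_i(\bolda)\,z^{|\mu|+i}$; that is, $\sum_{|\nu|=k}q_\nu p_\nu(x)=0$ for $k\notin\{|\mu|,\dots,|\lambda|\}$ and $\sum_{|\nu|=|\mu|+i}q_\nu p_\nu(x)=c\,E_i(\bolda)$ for $0\le i\le m$, for some $c\in\psK^\times$. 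If in addition~\eqref{eqn:leadtermplucker} holds, then $p_\nu(x)=0$ whenever $\Omega_\nu=0$ (in particular for $\nu\notin[\mu,\lambda]$), while for $\mu\le\nu\le\lambda$ with $\Omega_\nu\ne0$ we have $\val(p_\nu(x))=\val(T|_{\lambda/\nu})$ and $\leadcoeff(p_\nu(x))=\Omega_\nu$; since $\Omega_\lambda$ is an empty product equal to $1$, the equation for $k=|\lambda|$ forces $\leadterm(c)=q_\lambda$, which fixes the normalization.

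The combinatorial crux is that $\ell_i$ equals the common value of $\val(T|_{\lambda/\nu})$ over $\nu\in M_i(T)$. Because $T$ is diagonally increasing, the valuations of its entries weakly decrease along rows and down columns; hence if an $(m{-}i)$-element set $S$ of boxes of $\lambda/\mu$ is not of the form $\lambda/\nu$, there is a box of $S$ immediately above or to the left of a box of $(\lambda/\mu)\setminus S$, and swapping them does not increase $\sum_{b\in S}\val(\text{entry of }b)$ while strictly increasing $\sum_{b\in S}(\mathrm{row}(b)+\mathrm{col}(b))$. After finitely many swaps $S$ becomes a set $\lambda/\nu$ with $\nu\vdash|\mu|{+}i$ and $\mu\le\nu\le\lambda$, so the minimum defining $\ell_i$ is attained exactly on the shapes $\lambda/\nu$, $\nu\in M_i(T)$. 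Given this, the ``only if'' direction follows by extracting the coefficient of $u^{\ell_i}$ from $\sum_{|\nu|=|\mu|+i}q_\nu p_\nu(x)=c\,E_i(\bolda)$: grouping the left side by valuation, the coefficient of $u^{\ell_i}$ is $\sum_{\nu\in M_i(T)}q_\nu\Omega_\nu$ (all other $\nu$ in $[\mu,\lambda]$ contribute only at strictly higher valuation, and $p_\nu(x)=0$ outside $[\mu,\lambda]$), while on the right it is $\leadcoeff(c)\,e_i(\bolda)=q_\lambda\,e_i(\bolda)$; equating gives~\eqref{eqn:leadtermequations}.

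For the ``if'' direction I would argue via Hensel's lemma (in its multivariate form) over the complete field $\psK$. Lemma~\ref{lem:pluckerschubert} and its analogue for the flag at $\infty$ show that any $x$ satisfying~\eqref{eqn:leadtermplucker} lies in the Richardson cell $\psX^\circ_\mu(0)\cap\psX^\circ_{\lambda^\vee}(\infty)$, which is smooth of dimension $m$. I would parametrize this cell by coordinates $\theta_1,\dots,\theta_m$ adapted to $T$, the box of $\theta_i$ being the box of $a_i$, chosen so that if $\theta_i$ is given the valuation $\val(a_i)$ then, for $\mu\le\nu\le\lambda$, the Pl\"ucker coordinate $p_\nu$ equals $\Omega_\nu(\theta):=\prod_{i\ \text{outside}\ \nu}\theta_i$ modulo terms of strictly larger valuation, while $p_\nu=0$ otherwise; concretely, the $\theta_i$ can be built as monomials in the Pl\"ucker ratios $p_\nu/p_{\nu'}$. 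On this cell the equations with $k\notin\{|\mu|,\dots,|\lambda|\}$ hold automatically and the one with $k=|\lambda|$ is the normalization $c=q_\lambda p_\lambda$, leaving the $m$ equations $\sum_{\mu\le\nu\le\lambda,\ |\nu|=|\mu|+i}q_\nu p_\nu(\theta)=c\,E_i(\bolda)$, $0\le i\le m-1$, in the $m$ unknowns $\theta_i$. Putting $\theta^{(0)}_i:=\omega_i u^{\val(a_i)}$, the leading-order part of this system at $\theta^{(0)}$ is exactly $\sum_{\nu\in M_i(T)}q_\nu\Omega_\nu=q_\lambda e_i(\bolda)$, which holds by~\eqref{eqn:leadtermequations}, while the leading term of the Jacobian of the system at $\theta^{(0)}$ is the matrix $J$ of~\eqref{eqn:jacobiancondition} up to multiplication by invertible diagonal matrices of powers of $u$. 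Thus $\det J\ne 0$ is precisely the nondegeneracy needed, and Hensel's lemma yields a solution $\theta$ with $\theta_i-\theta^{(0)}_i$ of strictly larger valuation; the associated point $x\in\psX(\bolda^+)$ satisfies~\eqref{eqn:leadtermplucker}.

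I expect the main obstacle to be this last direction: constructing the coordinates $\theta_i$ on the Richardson cell in which the Pl\"ucker coordinates acquire the stated monomial leading behaviour (so that $\theta^{(0)}$ is a genuine leading-order approximation to the desired $x$), and verifying that the leading Jacobian of the $m$ reduced equations is $J$ up to invertible diagonal factors --- which is what makes~\eqref{eqn:jacobiancondition} the right hypothesis for Hensel's lemma and~\eqref{eqn:leadtermequations} the statement that the approximation is correct to leading order. The other ingredients --- the reformulation in the first paragraph, the swapping lemma, and the coefficient comparison for the ``only if'' direction --- are routine.
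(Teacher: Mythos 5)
Your proposal follows the same broad strategy as the paper (Proposition~\ref{prop:wrplucker} to recast membership in the fibre as linear equations in Pl\"ucker coordinates, coefficient extraction for ``only if,'' Hensel's lemma for ``if''), and the swapping argument you give for why the minimum of $\val(T|_{\lambda/\nu})$ over shapes $\nu\vdash|\mu|{+}i$ coincides with $\ell_i$ is a correct and useful piece that the paper does not spell out. However, the ``if'' direction does contain a genuine gap, and you have accurately located it yourself: the claim that the Richardson cell $\psX^\circ_\mu(0)\cap\psX^\circ_{\lambda^\vee}(\infty)$ carries coordinates $\theta_1,\dots,\theta_m$ with respect to which each $p_\nu$ (for $\mu\le\nu\le\lambda$) equals $\Omega_\nu(\theta)$ up to higher valuation. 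The Pl\"ucker coordinates are determinants, hence polynomials (not monomials) in any linear matrix parametrization of the cell, and the suggestion to build $\theta_i$ as ``monomials in the Pl\"ucker ratios $p_\nu/p_{\nu'}$'' is circular: without the Gel'fand--Tsetlin relations~\eqref{eqn:GTrelations} holding, different choices of defining ratio for a given box disagree, and expressing $p_\nu/p_\lambda$ as $\Omega_\nu(\theta)$ already presupposes those relations. What is actually needed here is a lifting statement: that a point of the Gel'fand--Tsetlin toric degeneration (a solution of~\eqref{eqn:GTrelations}, which Lemma~\ref{lem:omegaform} identifies with a choice of $\omega_i$) lifts to a point of $\psX$ with the prescribed leading terms. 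That is precisely Lemma~\ref{lem:aclosepoint}, which the paper imports from Speyer--Sturmfels; without it, $\theta^{(0)}$ is only a point of the ambient coordinate space and not an approximate point of the Grassmannian, so Hensel's lemma has no foothold.

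Two smaller differences from the paper's route are worth noting. First, the paper proves Theorem~\ref{thm:leadtermequations} only for $\lambda/\mu=\Rect$ by the Hensel argument, and then obtains the general skew case as a consequence of Theorem~\ref{thm:zeroinfinitylimit}(ii), which degenerates the outer entries to $0$ and $\infty$; you attempt the skew case directly. That is a reasonable economy, but it raises exactly the obstacle above, since in the $\Rect$ case the paper never needs to parametrize a Richardson cell at all --- it works near the explicit point $x'$ provided by Lemma~\ref{lem:aclosepoint}. Second, when you extract the coefficient of $u^{\ell_i}$ in the ``only if'' step, you should note (as you implicitly do) that the identity holds even when $e_i(\bolda)=0$ or when $\sum_{\nu\in M_i(T)}q_\nu\Omega_\nu$ vanishes; this is why $e_i(\bolda)$ is defined as a coefficient rather than a leading coefficient. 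The cleanest repair of your ``if'' direction is to reinstate Lemma~\ref{lem:aclosepoint}: from a solution $\omega$ of~\eqref{eqn:leadtermequations}, form $c_\nu=\Omega_\nu$, observe via Lemma~\ref{lem:omegaform} that this is a point of $\tilde X_0$, lift it to $x'\in\psX$ with $\leadterm(p_\nu(x'))=\Omega_\nu u^{w_\nu(T)}$, and then run Hensel's lemma in local coordinates near $x'$ exactly as you describe; the Jacobian comparison and the conclusion then go through as you wrote them.
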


In other words, to find points in $\psX(\bolda^+)$ corresponding to 
$T \in \SYT(\lambda/\mu; \bolda)$, we solve the system of 
equations~\eqref{eqn:leadtermequations}
for $\omega_1, \dots, \omega_{|\lambda/\mu|}$, and check that the solution
satisfies~\eqref{eqn:jacobiancondition}. The following example illustrates
the details of this process.

%
%
\begin{example} \rm
\label{ex:leadtermequations} 
\newcommand{\lteskewdiagram}
{
 \begin{picture}(50,50)(3,0)
 \put(25,50){\line(1,0){25}}
 \put(0,25){\line(1,0){50}}
 \put(0,0){\line(1,0){50}}
 \put(0,0){\line(0,1){25}}
 \put(25,0){\line(0,1){50}}
 \put(50,0){\line(0,1){50}}
 \end{picture}
}
\newcommand{\ltetableau}[3]
{
 \begin{picture}(50,50)(0,0)
 \put(25,25){#1}
 \put(0,0){#2}
 \put(25,0){#3}
 \put(0,0){\lteskewdiagram}
 \end{picture} 
}
With $n = 4$, $d=2$, $\lambda= (2\geq2)$, 
$\mu = (1 \geq 0)$, and $\bolda = \{\pcolor{4u{+}2u^2}, \pcolor{1}, \pcolor{1}\}$, let
$T \in \SYT(\lambda/\mu; \bolda)$
be the tableau
$$\raisebox{20pt}{$T = \ \ $}
\ltetableau
{\te{\footnotesize $4\!u\!\raisebox{.2ex}{\tiny $+$}\!2\!u\!^2$}}
{\te{1}}
{\te{1}}
\raisebox{20pt}{\ \ .}
$$
We will apply Theorem~\ref{thm:leadtermequations} to the tableau $T$. 

First, we 
determine $e_i(\bolda)$ and $M_i(T)$ for $i =0,1,2$.
We have,
\begin{align*}
E_0(\bolda) &= 4u+2u^2 \\
E_1(\bolda) &= (4u+2u^2) + (4u+2u^2) + 1 \\
E_2(\bolda) &= (4u+2u^2) + 1 + 1\,,
\end{align*}
whence
$e_0(\bolda) = 4$, $e_1(\bolda) = 1$, $e_2(\bolda) = 2$.
Each $M_i(T)$ is a singleton: $M_i(T) = \{\alpha_i\}$, where
$$\alpha_0 = (1\geq 0), \quad \alpha_1 = (2 \geq 0), \quad 
\alpha_2 = (2 \geq 1)\,.$$
Next, we assign variables $\omega_1, \omega_2, \omega_3$ to the boxes of 
$\lambda/\mu$ as shown here
$$
\ltetableau
{\be{$\omega_1$}}
{\be{$\omega_2$}}
{\be{$\omega_3$}}
\raisebox{20pt}{\ \ ,}
$$
and write down the conditions~\eqref{eqn:jacobiancondition} 
and~\eqref{eqn:leadtermequations}.
We have
$$
q_{\alpha_0}\Omega_{\alpha_0} = 2\omega_1\omega_2\omega_3\,,\qquad
q_{\alpha_1}\Omega_{\alpha_1} = 3\omega_2\omega_3\,,\qquad
q_{\alpha_2}\Omega_{\alpha_2} = 2\omega_3\,.
$$
Thus the Jacobian matrix from~\eqref{eqn:jacobiancondition} is
$$J = 
\begin{pmatrix}
2\omega_2 \omega_3 & 2\omega_1\omega_3 & 2\omega_1\omega_2 \\
 0 & 3\omega_3 & 3\omega_2 \\
 0 & 0 & 2 
\end{pmatrix}\,,
$$
and the system of equations~\eqref{eqn:leadtermequations} is simply
\begin{align*}
2\omega_1\omega_2\omega_3 &= 4 \\
3\omega_2\omega_3 &= 1 \\
2\omega_3 &= 2\,.
\end{align*}
The solution, $\omega_1 = 6$, $\omega_2 = \frac{1}{3}$, $\omega_3 = 1$,
is a point for which $J$ is non-singular.  Therefore,
Theorem~\ref{thm:leadtermequations} asserts that there exists a point 
$x \in \psX(\bolda^+)$ whose Pl\"ucker coordinates 
satisfy~\eqref{eqn:leadtermplucker}:
\begin{equation}
\label{eqn:ltpexample}
\begin{aligned}
\leadterm(p_{0 \geq 0}(x)) &= 0 &\qquad
\leadterm(p_{2 \geq 0}(x)) &= \omega_2\omega_3 = \textstyle \frac{1}{3}  \\
\leadterm(p_{1 \geq 0}(x)) &= \omega_1\omega_2\omega_3 u = 2u &\qquad
\leadterm(p_{2 \geq 1}(x)) &= \omega_3 = 1  \\
\leadterm(p_{1 \geq 1}(x)) &= \omega_1\omega_3 u = 6u &\qquad
\leadterm(p_{2 \geq 2}(x)) &= 1 \,.
\end{aligned}
\end{equation}

A straightforward calculation shows that the two points in 
$\psX(\bolda^+)$ are 
$\langle f_1(z), f_2(z)\rangle$ and $\langle g_1(z) , g_2(z)\rangle$,
where
\begin{align*}
f_1(z) &= z^3+z^2 &
g_1(z) &= (1+u)^2z^3+(6u+3u^2)z^2 \\
f_2(z) &= z^2 + (1+u)^2z + 2u+u^2 &
g_2(z) &= z^2+ (1+u)^2z + {\textstyle \frac{1}{3}}(1+u)^2 \,.
\end{align*}
The reader can easily check that $x = \langle g_1(z), g_2(z)\rangle$ does
indeed satisfy~\eqref{eqn:ltpexample}.
\end{example}
%
%

We will prove Theorem~\ref{thm:leadtermequations} in 
Section~\ref{sec:proofleadterm}.  The most fundamental
case is when $\lambda/\mu= \Rect$ and restriction~\I holds.
In this case we obtain a bijection $x \leftrightarrow T_x$
between $\psX(\bolda)$ and $\SYT(\Rect; \bolda)$.

\begin{corollary}
\label{cor:psfibres}
Let $\bolda = \{a_1, \dots, a_N\} \subset \psK$ satisfying restriction~\I.  
For every $T \in \SYT(\Rect; \bolda)$, there is a unique point
$x_T \in \psX(\bolda)$ whose Pl\"ucker coordinates 
$[p_\nu(x_T)]_{\nu \in \Lambda}$ satisfy
\begin{equation}
\label{eqn:valplucker}
\val(p_\nu(x_T)) = \val(T|_{\nu^c})
\qquad\text{for all $\nu \in \Lambda$}\,.  
\end{equation}
Moreover, for every point $x \in \psX(\bolda)$ 
there is a unique tableau $T_x \in \SYT(\Rect; \bolda)$ such that 
$x = x_{T_x}$.  In particular, the fibre $\psX(\bolda)$ is reduced.
\end{corollary}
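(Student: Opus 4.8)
The plan is to deduce Corollary~\ref{cor:psfibres} from the $\lambda/\mu = \Rect$ case of Theorem~\ref{thm:leadtermequations}, together with a dimension/degree count over $\psK$. When $\lambda/\mu = \Rect$ we have $\mu = \varnothing$ and $\lambda = \Rect$, so restrictions \II and \III are vacuous, $\bolda^+ = \bolda$, and restriction~\I holds by hypothesis; thus each $\DIT(\Rect;\bolda) = \SYT(\Rect;\bolda)$ and for every $T$ the sets $M_i(T)$ and the filling of $\Rect$ by $\omega_1,\dots,\omega_N$ are defined with all $\Omega_\nu$ being squarefree monomials in the $\omega_i$. First I would observe that the system~\eqref{eqn:leadtermequations} is triangular: working from $i = N-1$ down to $i=0$, the equation indexed by $i$ involves (through $\sum_{\nu\in M_i(T)} q_\nu\Omega_\nu$) exactly the product of the $\omega_j$ outside the partitions in $M_i(T)$, and because $M_i(T)$ records the partitions of size $i$ on which $\val(T|_{\lambda/\nu})$ is \emph{minimal}, the new variable that appears at step $i$ but not at steps $>i$ is $\omega_{j}$ where $a_j$ occupies the box removed to pass from size $i{+}1$ to size $i$ along the ``norm-increasing'' reading of $T$. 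Concretely: the box of $T$ containing the entry of $\ord$-rank $N-i$. Hence~\eqref{eqn:leadtermequations} can be solved uniquely for $(\omega_1,\dots,\omega_N)$, since $q_\lambda > 0$, each $q_\nu > 0$, and at each step we divide by a product of already-determined nonzero $\omega_j$'s and a positive constant. This gives a unique solution, and one checks the Jacobian $J$ from~\eqref{eqn:jacobiancondition} is triangular with nonzero diagonal on this solution, so~\eqref{eqn:jacobiancondition} holds automatically.

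Granting this, Theorem~\ref{thm:leadtermequations} produces, for each $T \in \SYT(\Rect;\bolda)$, a point $x_T \in \psX(\bolda)$ whose Pl\"ucker coordinates satisfy $\leadterm(p_\nu(x_T)) = \Omega_\nu u^{\val(T|_{\Rect/\nu})}$. Since the solved $\omega_i$ are all nonzero, $\Omega_\nu \neq 0$ for every $\mu=\varnothing \le \nu \le \lambda=\Rect$, i.e.\ for every $\nu\in\Lambda$, so $\leadterm(p_\nu(x_T))\neq 0$ and thus $\val(p_\nu(x_T)) = \val(T|_{\nu^c})$, which is~\eqref{eqn:valplucker}. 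Next I would argue that $T \mapsto x_T$ is injective: if $x_T = x_{T'}$ then $\val(p_\nu(x_T)) = \val(p_\nu(x_{T'}))$ for all $\nu$, so $\val(T|_{\nu^c}) = \val(T'|_{\nu^c})$ for all $\nu\in\Lambda$; taking $\nu$ to range over all partitions recovers, box by box, the valuation of each entry of $T$, and since restriction~\I forces the valuations of the $N$ elements of $\bolda$ to be pairwise distinct (the norm is $\varepsilon^{\val}$), this determines which element of $\bolda$ sits in which box, i.e.\ $T = T'$. Therefore $|\psX(\bolda)| \ge |\SYT(\Rect;\bolda)| = |\ordSYT(\Rect)|$.

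For surjectivity and reducedness, I would invoke that $\Wr$ is flat and finite of degree $\deg\Wr = |\ordSYT(\Rect)|$ (Theorem~\ref{thm:flatfinite} and the Schubert-calculus count recalled after Theorem~\ref{thm:svroots}), which holds over $\psK$ just as over $\CC$ since these are algebro-geometric facts valid over any algebraically closed field of characteristic zero. Flatness gives that $\psX(\bolda)$, counted with multiplicity, has total length exactly $|\ordSYT(\Rect)|$. We have exhibited at least that many \emph{distinct} points $x_T$, so they are all the points of $\psX(\bolda)$, each appears with multiplicity one, and the fibre is reduced; in particular every $x \in \psX(\bolda)$ equals $x_{T_x}$ for a unique $T_x$, namely the one reconstructed from $\{\val(p_\nu(x))\}_\nu$ as above. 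The main obstacle I anticipate is verifying rigorously the triangularity claim for the system~\eqref{eqn:leadtermequations} and the Jacobian~\eqref{eqn:jacobiancondition} — specifically, pinning down that $M_i(T)$ behaves as a single ``staircase'' of partitions interpolating between consecutive sizes in a way compatible with the $\ord(T)$-order, so that exactly one fresh variable enters at each level; once that combinatorial bookkeeping is in place, the rest is the degree count, which is immediate from the cited results.
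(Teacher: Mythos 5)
Your proof is correct and takes essentially the same route as the paper: under restriction~\I each $M_i(T)$ is a singleton $\{\alpha_i\}$, which makes the system~\eqref{eqn:leadtermequations} triangular with the unique solution $\omega_i = q_{\alpha_i}c_i/q_{\alpha_{i-1}}$, the Jacobian upper triangular with nonzero diagonal, and the flatness/degree count finishes off surjectivity and reducedness. (One minor indexing slip: the new variable entering at step $i$ is $\omega_{i+1}$, whose entry has $\ord$-rank $i+1$, not $N-i$, but this does not affect the argument.)
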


\begin{proof}
Assume $\|a_1\| < \dots < \|a_N\|$, and 
let $c_i := \leadcoeff(a_i)$ be the leading coefficient
of $a_i$.
Then $e_i(\bolda) = c_{i+1} \dotsb c_N$.

Let $\alpha_i$ be the shape of $T|_{\{a_1, \dots, a_i\}}$.
Then $\alpha_i$ is the unique element in $M_i(T)$, and
$\Omega_{\alpha_i} = \omega_{i+1} \dotsb \omega_N$.
The
 equations~\eqref{eqn:leadtermequations} are 
$q_{\alpha_i} \omega_{i+1} \dotsb \omega_N = q_\Rect c_{i+1} \dotsb c_N$, which has the 
unique solution 
\begin{equation}
\label{eqn:simpleomegasolution}
\omega_i = \frac{q_{\alpha_{i}}c_i}{q_{\alpha_{i-1}}}\,.
\end{equation}
At this solution, the Jacobian matrix $J$ is upper triangular, with non-zero 
entries on
the diagonal; thus~\eqref{eqn:jacobiancondition} is satisfied. 
Therefore, by Theorem~\ref{thm:leadtermequations}, 
the solution~\eqref{eqn:simpleomegasolution} gives rise to
a point $x_T$ satisfying \eqref{eqn:valplucker}.  

It is easy to 
see that if $T \neq T' \in \SYT(\Rect; \bolda)$ then 
$\val(T|_{\nu^c}) \neq \val(T'|_{\nu^c})$ for some $\nu$; 
thus we have found $|\ordSYT(\Rect)|$ distinct points in the fibre, 
which is all of them, and the uniqueness follows.
\end{proof}

\subsection{Tableau entries of $0$ and $\infty$}
\label{sec:zeroinfinity}

If $0$ is an entry of a tableau $T$, satisfying restriction~\II,
then $T$ must have a straight shape $\lambda$, and $0$ must be in the 
upper left corner.  By deleting the $0$, one obtains a skew tableau
$\tilde T$ of shape $\lambda/\lowBox$.  This new tableau $\tilde T$
produces the same equations~\eqref{eqn:leadtermplucker}
and~\eqref{eqn:leadtermequations} to be solved in 
Theorem~\ref{thm:leadtermequations}; hence $T$ and $\tilde T$
are equivalent for practical purposes in that they correspond to 
the same point(s) in $\psX$.

If $\infty$ is an entry of $T$, the situation is similar,
however we must renormalize our equations in order to make sense
of Theorem~\ref{thm:leadtermequations} and Corollary~\ref{cor:psfibres}.
For example, consider Equation~\eqref{eqn:valplucker}.
If $\infty$ is an entry of $T$, there is
a summand of $-\infty$ in each expression
$\val(T|_{\nu^c})$, except for the degenerate case $\nu = \Rect$.  
Since the Pl\"ucker coordinates are only well defined up to a 
multiplicative constant, Equation~\eqref{eqn:valplucker} should be 
regarded up to an additive constant.
If we treat $-\infty$ 
as a formal symbol, and subtract it from the valuation of each
Pl\"ucker coordinate,
we arrive at the correct replacement for \eqref{eqn:valplucker} when 
$\infty$ is an entry:
$$\val(p_\nu(x_T)) =
\begin{cases}
\val(T|_{\nu^c}{\setminus}\infty)
        &\quad \text{if $\nu \neq \Rect$} \\
+\infty &\quad \text{if $\nu = \Rect$}\,,
\end{cases}
$$
where $T|_{\nu^c}{\setminus}\infty$ means $T|_{\nu^c}$ with the box
containing $\infty$ deleted.
Other cases where $\infty$ is an entry of $T$ can be analyzed similarly,
and always one finds that the point(s) corresponding to $T$
are exactly the same as the point(s) corresponding to $T \setminus \infty$.

The next theorem further illustrates why if $\lambda/\mu \neq \Rect$, 
the boxes of
$\mu$ and $\lambda^c$ should be thought of as containing
entries of $0$ and $\infty$ respectively, for purposes of 
Theorem~\ref{thm:leadtermequations}.

\begin{theorem}
\label{thm:zeroinfinitylimit}
Let $T \in \DIT(\Rect; \bolda)$, where
$\bolda = \{a_1, \dots, a_N\} \Subset \PP^1(\psK)$ and
$$
\|a_1\| \leq \dots \leq \|a_{i-1}\| <
\|a_i\| \leq \dots \leq \|a_j\| <
\|a_{j+1}\| \leq \dots \leq \|a_N\|\,.
$$
Let $\lambda$ be the shape of $T|_{\{a_1, \dots, a_j\}}$, and let
$\mu$ be the shape of $T|_{\{a_1, \dots, a_{i-1}\}}$.
For all $t \in \psK^\times$ with $\|t\| \leq 1$, define a tableau $T_t$ 
of shape $\Rect$ obtained from $T$ as follows:  
$T_t|_{\lambda/\mu} = T|_{\lambda/\mu}$ for all $t$;
the entries $a_1, \dots, a_{i-1} \in T$ are replaced by 
$ta_1, \dots, ta_{i-1}$ in $T_t$;
the entries $a_{j+1}, \dots, a_N \in T$ are replaced by 
$t^{-1}a_{j+1}, \dots, t^{-1}a_N$ in $T_t$.

Let $x_{T_t} \in \psX$ be the point corresponding to $T_t$ (as in
Theorem~\ref{thm:leadtermequations}). Let
$x'= \lim_{t \to 0} x_{T_t} \in \psX$, 
$\bolda' = \{a_i, \dots , a_j\}$, and $T' = T|_{\lambda/\mu}$.
Then the following are true:
\begin{enumerate}
\item[(i)]  $x' \in \psX_\mu(0) \cap \psX_{\lambda^\vee}(\infty)$;
\item[(ii)]  $x'$ corresponds to $T'$;
\item[(iii)] if $\bolda'$ satisfies restriction~\I, then 
$x'$ is the unique point corresponding to $T'$.
\end{enumerate}
\end{theorem}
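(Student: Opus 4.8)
The plan is to track the Pl\"ucker coordinates of $x_{T_t}$ explicitly using Theorem~\ref{thm:leadtermequations} and take the limit. First I would set up the leading-term data for the tableau $T_t$. Writing $\bolda^{(t)}$ for the multiset of entries of $T_t$, the key observation is that, because $\|a_1\|\leq\dots\leq\|a_{i-1}\|<\|a_i\|\leq\dots\leq\|a_j\|<\|a_{j+1}\|\leq\dots\leq\|a_N\|$ and $\|t\|\leq 1$, the three blocks of entries of $T_t$ stay segregated in norm for all such $t$: the scaled entries $ta_1,\dots,ta_{i-1}$ have the smallest norms, the middle block $a_i,\dots,a_j$ keeps its norms, and $t^{-1}a_{j+1},\dots,t^{-1}a_N$ have the largest. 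Consequently $\ord(T_t)$ is independent of $t$, the sets $M_i(T_t)$ are obtained from $M_i(T)$ in a controlled way, and the elementary-symmetric leading data $e_i(\bolda^{(t)})$ factor as a monomial in $t$ times the corresponding data of the three blocks. I would solve the system~\eqref{eqn:leadtermequations} for $T_t$ and record how each $\omega_k$ depends on $t$: the $\omega_k$ attached to boxes of $\mu$ will carry a positive power of $t$, those attached to boxes of $\lambda^c$ a negative power of $t$ (or rather their reciprocals appear), and those attached to $\lambda/\mu$ will be independent of $t$ — in fact equal to the solution of~\eqref{eqn:leadtermequations} for $T'$.

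Next I would compute $\leadterm(p_\nu(x_{T_t}))=\Omega_\nu^{(t)}u^{\val(T_t|_{\Rect/\nu})}$ from~\eqref{eqn:leadtermplucker} and examine the $t\to 0$ limit. Here the normalization point from Section~\ref{sec:zeroinfinity} is essential: I would rescale the homogeneous Pl\"ucker vector by a common monomial in $t$ (and in $u$) so that the coordinate $p_\lambda$, say, is normalized to $1$. After this rescaling, for $\nu$ with $\mu\leq\nu\leq\lambda$ the coordinate $p_\nu(x_{T_t})$ has a finite nonzero limit given by $\Omega_\nu u^{\val(T'|_{\lambda/\nu})}$ — precisely the Pl\"ucker data of the point corresponding to $T'$ via Theorem~\ref{thm:leadtermequations} — while for $\nu\not\leq\lambda$ or $\nu\not\geq\mu$ the coordinate carries a strictly positive power of $t$ and hence vanishes in the limit. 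This gives (ii): $x'$ has exactly the Pl\"ucker coordinates prescribed by $T'$ in Theorem~\ref{thm:leadtermequations}. For (i), I would read off (i) from the vanishing pattern just established together with Lemma~\ref{lem:pluckerschubert}: $p_\nu(x')=0$ for $\nu\not\geq\mu$ says $x'\in\psX_\mu(0)$, and the symmetric vanishing pattern $p_\nu(x')=0$ for $\nu\not\leq\lambda$, after applying the $180^\circ$ rotation that sends $\lambda^c$ to $\lambda^\vee$ and the flag $F_\bullet(0)$ to $F_\bullet(\infty)$, says $x'\in\psX_{\lambda^\vee}(\infty)$. Finally (iii) follows from Corollary~\ref{cor:psfibres} applied to the smaller rectangle (or from the uniqueness clause of Theorem~\ref{thm:leadtermequations}): when $\bolda'$ satisfies restriction~\I, the solution of~\eqref{eqn:leadtermequations} for $T'$ is unique and the Jacobian condition holds, so $T'$ determines a single point.

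The main obstacle I expect is bookkeeping in the limit: one must verify that the rescaling by a single monomial in $t$ simultaneously makes every coordinate converge and that no cancellation occurs in the leading terms as $t\to 0$ — i.e. that the leading term of $p_\nu(x_{T_t})$ really is a monomial in $t$ times a $t$-independent leading term, with no lower-order-in-$t$ contributions that could conspire. This is where I would need to use, carefully, that the three norm-blocks of $T_t$ never interleave for $\|t\|\leq 1$, so that the combinatorial quantities $\val(T_t|_{\Rect/\nu})$, $M_i(T_t)$, and $e_i(\bolda^{(t)})$ are each \emph{exactly} (not just asymptotically) the expected monomials-in-$t$ times block data. A secondary subtlety is handling the degenerate possibility that $0$ or $\infty$ is already an entry of $T$ (i.e. $\mu$ or $\lambda^c$ is nonempty at the start); but the discussion of Section~\ref{sec:zeroinfinity} shows $T$ and $T\setminus\{0,\infty\}$ give the same point(s), so one reduces to the case $\bolda\Subset\psK^\times$ without loss of generality. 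Once the monomial structure is pinned down, taking $t\to 0$ and invoking Lemma~\ref{lem:pluckerschubert} and Corollary~\ref{cor:psfibres} is routine.
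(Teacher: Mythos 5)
Your parts (i) and (ii) track the paper's argument almost exactly: one normalizes the Pl\"ucker vector so that the limit $\lim_{t\to 0}\leadterm(p_\nu(x_{T_t}))$ exists, observes that it equals $\Omega_\nu/\Omega_\lambda\cdot u^{\val(T|_{\lambda/\nu})}$ for $\mu\le\nu\le\lambda$ and vanishes otherwise, applies Lemma~\ref{lem:pluckerschubert} (and its $F_\bullet(\infty)$ mirror) for (i), and for (ii) checks that the subcollection of equations from the system~\eqref{eqn:leadtermequations} indexed by $i\le i+k\le j$, divided through by the equation $q_\lambda\Omega_\lambda=q_\Rect e_j(\bolda)$, becomes precisely the system~\eqref{eqn:leadtermequations} for $(T',x')$ after the substitutions $\Omega_\nu/\Omega_\lambda=\Omega'_\nu$, $e_{i+k}(\bolda)/e_j(\bolda)=e_k(\bolda')$, $M_{k+i}(T)=M_k(T')$. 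Your extra bookkeeping about which $\omega_k$ carry which powers of $t$ is fine but not needed once the normalization is chosen; the paper skips it.

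Part (iii) has a genuine gap. You appeal to ``Corollary~\ref{cor:psfibres} applied to the smaller rectangle'' and to a ``uniqueness clause of Theorem~\ref{thm:leadtermequations},'' but neither is available: $\lambda/\mu$ is a skew shape, not a rectangle, so Corollary~\ref{cor:psfibres} does not apply, and Theorem~\ref{thm:leadtermequations} as stated is an existence/characterization equivalence, not a uniqueness statement. The fact that the system~\eqref{eqn:leadtermequations} has a unique solution $(\omega_1,\dots,\omega_{|\lambda/\mu|})$ with nonvanishing Jacobian only tells you that \emph{some} point has the prescribed Pl\"ucker leading terms; to rule out a second corresponding point you would need to unpack the Hensel's-lemma step inside the proof of Theorem~\ref{thm:leadtermequations} and argue uniqueness of the lift, which the statement alone does not hand you. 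The paper instead uses a Schubert-calculus count: part (ii) shows every tableau in $\SYT(\lambda/\mu;\bolda')$ corresponds to at least one point of $\psX((\bolda')^+)\cap\psX_\mu(0)\cap\psX_{\lambda^\vee}(\infty)$, distinct tableaux yield points with distinct Pl\"ucker valuations (as in the proof of Corollary~\ref{cor:psfibres}), and the intersection has at most $|\ordSYT(\lambda/\mu)|$ distinct points, so the correspondence must be a bijection and in particular each $T'$ determines a single point. You should replace your citation of nonexistent uniqueness with this counting argument, or explicitly invoke the uniqueness in Hensel's lemma within the proof of Theorem~\ref{thm:leadtermequations}.
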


\begin{proof}
After normalizing the Pl\"ucker coordinates so that
$\lim_{t \to 0} p_\nu(x_{T_t})$ is defined for all $\nu \in \Lambda$,
we find that 
\begin{equation}
\label{eqn:leadtermlimit}
\leadterm(p_\nu(x')) = \lim_{t \to 0} \leadterm(p_\nu(x_{T_t})) = 
\begin{cases} 
\leadterm(p_\nu(x)) & \quad\text{if $\mu \leq \nu \leq \lambda$} \\
0 & \quad\text{otherwise.} 
\end{cases}
\end{equation}
Thus the fact that $x' \in \psX_{\mu}(0)$ follows from 
Lemma~\ref{lem:pluckerschubert}, and the fact that 
$x' \in \psX_{\lambda^\vee}(\infty)$ can be shown analogously, proving
(i).

For (ii), we must consider~\eqref{eqn:leadtermplucker} 
and~\eqref{eqn:leadtermequations} as they pertain to the 
pair $(T,x)$ and to the pair $(T', x')$.  To quell the 
notational conflicts that this naturally presents, we will use unprimed 
variable names and equation numbers ($\omega_1, \dots, \omega_N$,
\eqref{eqn:leadtermplucker}, etc.), when
referring to the context of $(T,x)$, and primed variable
names and equation numbers
($\omega'_i, \dots, \omega'_j$, \eqref{eqn:leadtermplucker}$'$, etc.) 
in the context of $(T', x')$.  

We know 
that~\eqref{eqn:leadtermplucker} holds for $(T,x)$, 
where $\omega_1, \dots, \omega_N$ 
are a solution to~\eqref{eqn:leadtermequations}.  
Equation~\eqref{eqn:leadtermlimit} gives
the lead terms of the Pl\"ucker coordinates of $x'$:  after renormalizing,
these are
\begin{equation}
\label{eqn:leadtermpluckerprime}
\leadterm(p_\nu(x')) = 
\begin{cases} 
\frac{\Omega_\nu}{\Omega_\lambda} u^{\val(T|_{\lambda/\nu})} 
& \quad\text{if $\mu \leq \nu \leq \lambda$}\,, \\
0 &\quad\text{otherwise.}
\end{cases}
\end{equation}
Set $\omega'_k = \omega_k$ for $k = i, \dots ,j$, where $\omega'_k$
is the variable corresponding to the box of $T'$ containing $a_k$.
Then $\Omega'_\nu = \Omega_\nu/\Omega_\lambda$, and so 
from~\eqref{eqn:leadtermpluckerprime} we see that~\eqref{eqn:leadtermplucker}$'$
holds for $(T', x')$.  
Furthermore,
the equations in the system~\eqref{eqn:leadtermequations} include
\begin{equation}
\label{eqn:subsetofleadtermequations}
\sum_{\nu \in M_{i+k}(T)} q_\nu \Omega_\nu = q_\Rect e_{i+k}(\bolda)
\qquad\text{for $0 \leq k < |\lambda/\mu|$}
\end{equation}
and
\begin{equation}
\label{eqn:oneleadtermequation}
q_\lambda \Omega_{\lambda} = q_\Rect e_j(\bolda)\,.
\end{equation}
We deduce that \eqref{eqn:leadtermequations}$'$ also holds for $(T',x')$ by 
dividing~\eqref{eqn:subsetofleadtermequations} 
by~\eqref{eqn:oneleadtermequation}, and
noting that $e_{i+k}(\bolda)/e_j(\bolda) = e_k(\bolda')$, 
$M_{k+i}(T) = M_k(T')$, and
$\Omega_\nu/\Omega_\lambda = \Omega'_{\nu}$. 
Since~\eqref{eqn:leadtermplucker}$'$ and~\eqref{eqn:leadtermequations}$'$
hold simultaneously, $T'$ corresponds to $x'$.

Finally, for (iii), we argue as in the proof of
Corollary~\ref{cor:psfibres}.  We have just shown that every
tableau $T \in \SYT(\lambda/\mu;\bolda')$ corresponds to
at least one point in the intersection
$\psX((\bolda')^+) \cap \psX_\mu(0) \cap \psX_{\lambda^\vee}(\infty)$.
But from Schubert calculus, we know the number of distinct points in 
this intersection is at most
$|\ordSYT(\lambda/\mu)|$, so the correspondence is bijective.
\end{proof}

\subsection{The Pl\"ucker ideal and its initial ideal}

We recall some standard facts about the equations defining $X$ and
initial ideals, for
which \cite{MS} may serve as a general reference.

Viewing $[p_\lambda]_{\lambda \in \Lambda}$ as the coordinates on
$\CP^{{n \choose d} -1}$, the Pl\"ucker coordinates define a projective
embedding of $X$; hence,
$$X = \Proj \CC[\boldp]/ I\,,$$
where $\CC[\boldp] = \CC[p_\lambda]_{\lambda \in \Lambda}$ has grading
given by $\deg p_\lambda = 1$ for all $\lambda \in \Lambda$, and
where $I$ is the \bfdef{Pl\"ucker ideal}, consisting of all polynomial
relations among the Pl\"ucker coordinates.
To state the generators of this ideal, let
$$p_{i_1, \dots, i_d} := 
\begin{cases}
\sgn(\sigma_{i_1, \dots, i_d}) p_\lambda 
&\quad
\text{if $J(\lambda) = \{i_1, \dots, i_d\}$ for some $\lambda \in \Lambda$} \\
0 &\quad\text{otherwise}\,,
\end{cases}
$$
where $\sigma_{i_1, \dots, i_d}$ denotes the permutation
that puts the list $i_1, \dots, i_d$ in increasing order.
The ideal $I$ is 
generated by all quadratics of the form
$$ \sum_{m=1}^{d+1} (-1)^m p_{i_1, \dots, i_{d-1}, j_m} \, 
p_{j_1, \dots, \widehat{j_m}, \dots, j_{d+1}}\,,$$
for $i_1, \dots, i_{d-1}, j_1, \dots, j_{d+1} \in \{1, \dots, n\}$.

Let $\boldw = (w_\lambda)_{\lambda \in \Lambda} \in \QQ^{\Lambda}$ 
be a vector of rational numbers.  The \bfdef{weight} of a monomial
$m(\boldp) = c \prod_{\lambda \in \Lambda} p_\lambda^{k_\lambda}$  
with respect
to $\boldw$ is 
$$\weight_\boldw(m) 
:= \sum_{\lambda \in \Lambda} w_\lambda k_\lambda\,.$$
Define a homomorphism $u^\boldw : \CC[\boldp] \to \psK[\boldp]$
by
$$
u^\boldw m(\boldp) 
:= u^{\weight_\boldw(m)} m(\boldp) 
$$
for monomials and extending linearly to $\CC[\boldp]$.
If $h(\boldp) \in \CC[\boldp]$, then the 
\bfdef{initial form}
of $h$ with respect to $\boldw$,
denoted $\initial_\boldw(h)$,
is the sum of all monomial
terms in $h$ for which the weight of the term is minimized.
The \bfdef{initial ideal} of the Pl\"ucker ideal $I$ with respect 
to $\boldw$ is the ideal
$$\initial_\boldw(I) 
:= \{\initial_\boldw(h) \mid h \in I\}\,.$$
In this context, 
the vector $\boldw$ is called a \bfdef{weight vector}.

The scheme $\Proj \CC[\boldp]/\initial_\boldw(I)$ can also be described
as follows.  Consider the ideal
$u^\boldw I := \{u^\boldw h \mid h \in I\}$ 
in $\CC[u^{\pm \frac{1}{\delta}}; \boldp]$.  Here the variable $u$ has
weight $0$, and $\delta$ is a common denominator of the weights 
$w_\lambda$.  Let $\tilde X$ be the closure of 
$\Proj \CC[u^{\pm \frac{1}{\delta}}; \boldp]/
(u^\boldw I \otimes \CC[u^{\pm \frac{1}{\delta}}])$ inside
$\Proj \CC[u^{\frac{1}{\delta}}; \boldp] 
= \Proj \CC[\boldp] \times \Spec \CC[u^{\frac{1}{\delta}}]$.
$\tilde X$ defines a flat family of projective varieties 
over $\Spec \CC[u^{\frac{1}{\delta}}]$, whose fibre 
at $u^{\frac{1}{\delta}} = \varepsilon$ is denoted 
$\tilde X_{\varepsilon}$.
Each of the fibres $\tilde X_\varepsilon$, $\varepsilon \neq 0$, is 
isomorphic to $X$; indeed the ring map
$h \mapsto u^{-\boldw} h|_{u^{1/\delta} = \varepsilon}$ induces
an isomorphism $\psi_\varepsilon: X \mapsto \tilde X_\varepsilon$.  
We put $\tilde X_\varepsilon(\bolda) := \psi_\varepsilon(X(\bolda))$.  
Note that $\tilde X_1$ is naturally identified with $X$.
The special fibre $\tilde X_0$ is $\Proj \CC[\boldp]/\initial_\boldw(I)$.

The same construction can be performed with 
$\CC[u^{\pm \frac{1}{\delta}}; \boldp]$ and 
$\CC[u^{\frac{1}{\delta}}; \boldp]$ replaced by 
$\psK[\boldp]$ and $\psK_+[\boldp]$ respectively.  
Note that since $u^\boldw$ acts as an automorphism
on $\psK[\boldp]$, 
$\Proj \psK[\boldp]/(u^\boldw I \otimes \psK) 
\cong \Proj \psK[\boldp]/(I \otimes \psK) = \psX$.
Its closure in $\Proj \psK_+[\boldp]$, denoted $\bar \psX$,
is a flat scheme over $\Spec \psK_+$.
Note that since $\CC[u^{\pm \frac{1}{\delta}}] \hookrightarrow \psK$, we
have a morphism $\bar \psX \to \tilde X$ , which is an isomorphism
on the fibres at $u=0$.  
Though we have suppressed it from our 
notation, the schemes $\tilde X$ and $\bar \psX$ depend on $\boldw$.

We will be primarily concerned with the case where the weight vector 
comes from a diagonally increasing tableau.
Let $T \in \DIT(\Rect; \bolda)$, where $\bolda \Subset \psK^\times$.
Then $T$ gives rise to a weight vector 
$\boldw(T) = (w_\lambda(T))_{\lambda \in \Lambda}$, where
$w_\lambda(T) := \val(T|_{\lambda^c})$, for $\lambda \in \Lambda$.

\begin{lemma}
For any $T \in \DIT(\Rect; \bolda)$, 
the initial ideal
$\initial_{\boldw(T)}(I) \subset \CC[\boldp]$ 
is generated by quadratic binomials
\begin{equation}
\label{eqn:initialideal}
p_\lambda\, p_{\lambda'} 
- p_{\lambda \vee \lambda'}\, p_{\lambda \wedge \lambda'} \,,
\end{equation}
for all $\lambda, \lambda' \in \Lambda$. 
Here, $\wedge$ and $\vee$ are the meet and join operators on 
$\Lambda$ respectively.
\end{lemma}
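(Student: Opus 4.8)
The plan is to show the two inclusions $\initial_{\boldw(T)}(I) \supseteq \langle \eqref{eqn:initialideal}\rangle$ and $\initial_{\boldw(T)}(I) \subseteq \langle\eqref{eqn:initialideal}\rangle$ separately, using the three-term Plücker relations and a dimension/degree count. First I would verify the easy direction: for each pair $\lambda,\lambda' \in \Lambda$, there is a classical quadratic Plücker relation whose terms are $p_\lambda p_{\lambda'}$, $p_{\lambda\vee\lambda'}p_{\lambda\wedge\lambda'}$, and a number of further ``higher'' terms (obtained by the standard straightening/exchange argument on the column sets $J(\lambda),J(\lambda')$). The key computational claim is that, with respect to the weight vector $\boldw(T)$ with $w_\nu(T) = \val(T|_{\nu^c})$, the weight is \emph{minimized} precisely on the two terms $p_\lambda p_{\lambda'}$ and $p_{\lambda\vee\lambda'}p_{\lambda\wedge\lambda'}$, and these two weights are equal. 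This is exactly the statement that $\val(T|_{\lambda^c}) + \val(T|_{(\lambda')^c}) = \val(T|_{(\lambda\vee\lambda')^c}) + \val(T|_{(\lambda\wedge\lambda')^c})$, which follows since $(\lambda\vee\lambda')^c$ and $(\lambda\wedge\lambda')^c$ together cover the same multiset of boxes as $\lambda^c$ and $(\lambda')^c$ (the box complements of a meet and join partition the same cells as the complements of the originals, counted with multiplicity). For the higher terms, I would use strict diagonal increase of $T$ (the defining property of $\DIT$) to show their weight is strictly larger: each higher term arises by moving some box of $T|_{\lambda^c}$ or $T|_{(\lambda')^c}$ to a cell strictly below-and-right, which strictly increases valuation of that box relative to where a meet/join complement would put it. Hence $\initial_{\boldw(T)}$ of that Plücker relation equals the binomial \eqref{eqn:initialideal}, giving $\supseteq$.

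For the reverse inclusion, I would argue by a standard Gröbner/flatness degree comparison rather than directly. The binomial ideal $J := \langle p_\lambda p_{\lambda'} - p_{\lambda\vee\lambda'} p_{\lambda\wedge\lambda'}\rangle$ is the defining ideal of the toric degeneration of the Grassmannian associated to the distributive lattice $\Lambda$ (the Hibi ring of $\Lambda$); in particular $\Proj\CC[\boldp]/J$ is a reduced, irreducible (indeed Cohen--Macaulay) projective toric variety of dimension $N$ and of the same degree as $X$. Since $\tilde X$ is a flat family over $\Spec\CC[u^{1/\delta}]$ with generic fibre $\cong X$, the special fibre $\tilde X_0 = \Proj\CC[\boldp]/\initial_{\boldw(T)}(I)$ has the same Hilbert polynomial as $X$, hence the same dimension and degree. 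We have shown $J \subseteq \initial_{\boldw(T)}(I)$, so there is a surjection $\CC[\boldp]/J \twoheadrightarrow \CC[\boldp]/\initial_{\boldw(T)}(I)$ of graded rings with the same Hilbert polynomial; since the source is reduced and equidimensional of that dimension and degree (no embedded or lower-dimensional components to absorb the discrepancy), the surjection must be an isomorphism, forcing $J = \initial_{\boldw(T)}(I)$.

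The main obstacle I anticipate is the bookkeeping in the first direction: writing down the correct three-term (and possibly longer, after straightening) Plücker relation attached to an \emph{arbitrary} pair $\lambda,\lambda'$ — not just a comparable pair or a pair differing in one box — and verifying rigorously that under $\boldw(T)$ exactly the meet--join pair of monomials achieves the minimum weight, with all other monomials strictly heavier. The inequality for the ``other'' monomials is where strict diagonal increase of $T$ is essential and must be invoked carefully: one needs that any rearrangement of the box-contents other than the meet/join one moves at least one entry to a strictly deeper diagonal, hence strictly raises its valuation (recall valuations \emph{decrease} along rows and columns of a $\DIT$, so deeper-diagonal means strictly larger valuation increment). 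Once that inequality is pinned down, the initial form of each Plücker generator is the desired binomial, and the degree argument closes the proof. One should also note that every binomial in \eqref{eqn:initialideal} with $\lambda,\lambda'$ already comparable is trivially zero, so only incomparable pairs contribute, matching the generators of the Hibi relations of $\Lambda$.
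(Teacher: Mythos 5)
The paper does not actually prove this lemma; it is stated as a standard fact with Miller--Sturmfels \cite{MS} given as the general reference (this is the well-known Gel'fand--Tsetlin / Hibi ring degeneration of the Grassmannian), so there is no ``paper's own proof'' to compare against. Your two-step outline --- first exhibiting the binomials as initial forms of straightening relations, then pinning the ideal down by a degree/Hilbert comparison --- is the standard route to this result and your sketch is essentially sound. Two remarks on tightening. First, the flatness of the Gr\"obner degeneration preserves the full Hilbert \emph{function}, not merely the Hilbert polynomial, and the Hibi ring $\CC[\boldp]/J$ also has the same Hilbert function as the Grassmannian (both have a standard-monomial basis indexed by multichains in $\Lambda$); using the Hilbert function directly makes the surjection $\CC[\boldp]/J \twoheadrightarrow \CC[\boldp]/\initial_{\boldw(T)}(I)$ an isomorphism degree-by-degree, with no need for the detour through reducedness and absence of embedded primes (though that argument is also valid via depth of the Cohen--Macaulay Hibi ring). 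Second, the ``strictly larger weight'' claim for the non-minimal terms is exactly where the strict diagonal increase in the definition of $\DIT$ does its work, and you are right to flag it as the bookkeeping bottleneck: one needs that for any straightening term $p_\mu p_{\mu'}$ with $\mu < \lambda\wedge\lambda'$ and $\mu' > \lambda\vee\lambda'$, passing from the complement-multiset of $\{\lambda\wedge\lambda', \lambda\vee\lambda'\}$ to that of $\{\mu,\mu'\}$ moves at least one box strictly along a diagonal toward the upper-left, and that the row/column weak monotonicity of $T$ cannot cancel this strict gain. Making that inequality airtight --- in particular handling the fact that the higher straightening terms need not differ from the meet/join pair by a single box move --- is the only real work left, but the overall architecture is correct.
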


Thus $\boldc = [c_\lambda]_{\lambda \in \Lambda}$ represents
a point in $\tilde X_0$ if and only if 
\begin{equation}
\label{eqn:GTrelations}
c_\lambda\, c_{\lambda'} 
= c_{\lambda \vee \lambda'}\, c_{\lambda \wedge \lambda'} \,,
\end{equation}
for all $\lambda, \lambda' \in \Lambda$. 
In this case, $\tilde X_0$ is the Gel'fand-Tsetlin toric variety.


\subsection{Proof of Theorem~\ref{thm:leadtermequations}}
\label{sec:proofleadterm}

\begin{lemma}
\label{lem:aclosepoint}
For $T \in \DIT(\Rect; \bolda)$, $\bolda \Subset \psK^\times$,
let $x \in \psX$ be a point satisfying
\begin{equation}
\label{eqn:leadtermplucker2}
\leadterm(p_\nu(x)) = c_\nu u^{w_\nu(T)}
\end{equation}
for some $[c_\nu]_{\nu \in \Lambda} \in \CC^{\Lambda}$.
Then $\boldc = [c_\nu]_{\nu \in \Lambda}$ satisfies the 
relations~\eqref{eqn:GTrelations}.
Conversely, if $\boldc$ satisfies~\eqref{eqn:GTrelations},
then there is a point $x \in \psX$ for 
which~\eqref{eqn:leadtermplucker2} holds.
\end{lemma}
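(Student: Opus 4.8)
The plan is to prove the two directions using the flat family $\bar\psX$ over $\Spec\psK_+$ attached to the weight vector $\boldw=\boldw(T)$, together with the fact that its special fibre $\bar\psX_0$ (which coincides with $\tilde X_0$) is the Gel'fand--Tsetlin toric variety cut out by the binomials \eqref{eqn:initialideal}. First I would observe that $\leadterm(p_\nu(x)) = c_\nu u^{w_\nu(T)}$ is exactly the statement that the point $\psi(x)\in\bar\psX$ obtained by rescaling $p_\nu\mapsto u^{-w_\nu(T)}p_\nu$ has all coordinates in $\psK_+$ and reduces mod the maximal ideal of $\psK_+$ to the point $\boldc=[c_\nu]\in\PP^{\binom nd-1}$ over $\CC$. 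Since $\bar\psX$ is a closed subscheme of $\Proj\psK_+[\boldp]$ that is flat over $\Spec\psK_+$, its special fibre is $\Proj\CC[\boldp]/\initial_{\boldw(T)}(I) = \tilde X_0$. Hence any $\psK_+$-point of $\bar\psX$ specializes to a $\CC$-point of $\tilde X_0$, which by the preceding lemma means $\boldc$ satisfies \eqref{eqn:GTrelations}. This gives the forward direction: a point $x\in\psX$ with \eqref{eqn:leadtermplucker2} yields $\psi(x)\in\bar\psX(\psK_+)$, whose reduction lies in $\tilde X_0(\CC)$, so $\boldc$ obeys the Gel'fand--Tsetlin relations.

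For the converse, I would start from a point $\boldc\in\tilde X_0(\CC)$ satisfying \eqref{eqn:GTrelations} and lift it to a $\psK_+$-point of $\bar\psX$ whose reduction is $\boldc$; unwinding the rescaling $\psi$ then produces the desired $x\in\psX$ with $\leadterm(p_\nu(x))=c_\nu u^{w_\nu(T)}$. The lifting step is where flatness does the work: because $\bar\psX\to\Spec\psK_+$ is flat (indeed $\psK_+$ is a valuation ring, so flatness is just torsion-freeness) and $\psK_+$ is Henselian with algebraically closed residue field $\CC$, any smooth or more generally any point of the special fibre that is a regular point — and here it suffices to use that $\tilde X_0$ is reduced and that one can choose $\boldc$ generic within its torus orbit, or simply invoke that $\psK$ is the fraction field of the Henselian $\psK_+$ and apply Hensel/Greenberg approximation — lifts to a $\psK_+$-point. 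Concretely, since the binomials \eqref{eqn:initialideal} have coefficients in $\CC\subset\psK_+$ and $\bar\psX$ is their scheme-theoretic closure, one can write down an explicit lift: set $p_\nu = c_\nu + (\text{higher valuation corrections})$ and solve the Plücker relations order by order in powers of $u$, the solvability at each stage being guaranteed by the fact that the tangent space of $X$ at a point with nonvanishing relevant coordinates surjects appropriately. I would phrase this cleanly via the isomorphism $\bar\psX\times_{\Spec\psK_+}\Spec\psK \cong \psX$ and the valuative criterion.

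The main obstacle I anticipate is the converse lifting when $\boldc$ has some vanishing coordinates, i.e.\ when $\boldc$ sits on the boundary of the Gel'fand--Tsetlin toric variety rather than in its open torus. On the boundary the point of $\tilde X_0$ need not be a smooth point of $\tilde X_0$, and naive order-by-order solving of the Plücker relations may stall. I would handle this by noting that $\tilde X_0$, though singular, is still cut out by the binomials \eqref{eqn:initialideal}, which define a normal (in fact projectively normal) toric variety, so every $\CC$-point lifts to a $\psK_+$-point of $\bar\psX$ by the valuative criterion of properness applied to the flat proper family $\bar\psX\to\Spec\psK_+$ together with the fact that the generic fibre $\psX$ is smooth of the same dimension; alternatively one reduces to the torus case by acting with the Gel'fand--Tsetlin torus, which acts on the whole family $\bar\psX$, to move $\boldc$ into the open orbit, lift there by the easy argument, and transport back. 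Either way the key input is flatness of $\bar\psX$ over $\Spec\psK_+$ plus the explicit binomial description of the special fibre from the previous lemma.
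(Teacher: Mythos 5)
The paper disposes of this lemma with a single citation to \cite[Cor.~2.2]{SS} (the tropical-geometry lifting result), so you are already taking a genuinely different, self-contained route. Your forward direction is correct, though more indirect than necessary: the most elementary argument is simply to take, for each generator $h\in I$, the coefficient of $u^{w_0}$ in the identity $h(p_\nu(x))=0$, where $w_0$ is the minimal $\boldw(T)$-weight of a monomial of $h$; this yields $\initial_{\boldw(T)}(h)(\boldc)=0$ directly, with no reference to the flat family.

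The converse is where the real content lives, and you correctly identify the boundary points of $\tilde X_0$ as the obstruction to the Hensel/order-by-order argument. However, both repairs you propose have genuine problems. The torus fix does not work: the Gel'fand--Tsetlin torus acts only on the special fibre $\tilde X_0$, not on the family $\bar\psX$ (it does not preserve the Pl\"ucker ideal $I$ for $2\le d\le n-2$); the only torus acting on all of $\bar\psX$ is the $n$-torus of $\GL_n$ via $p_J\mapsto\prod_{j\in J}t_j\,p_J$, which has rank $n-1$ after projectivizing, far smaller than $N=\dim\tilde X_0$, and so cannot move an arbitrary boundary $\boldc$ into the open Gel'fand--Tsetlin orbit. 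The valuative-criterion fix, as stated, also points the wrong way: the criterion extends a given $\psK$-point of $\psX$ to a $\psK_+$-point of $\bar\psX$, whereas you need to produce a $\psK$-point specializing to a prescribed $\boldc\in\tilde X_0(\CC)$. The latter does follow from flatness and properness, but by a longer chain of reasoning: use going-down for the flat map $\bar\psX\to\Spec\psK_+$ to find an integral curve $Z\subset\bar\psX$ through $\boldc$ dominating $\Spec\psK_+$; its normalization $\tilde Z$ is finite and flat over $\psK_+$ with generic fibre a single $\psK$-point (since $\psK$ is algebraically closed), so $\tilde Z\cong\Spec\psK_+$ because $\psK_+$ is integrally closed in $\psK$; the composite $\Spec\psK_+\to\bar\psX$ is then the desired lift. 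None of these steps appear in your sketch, and without them the argument stalls exactly at the singular boundary points you flagged. The cleanest way out remains the route the paper takes, namely citing \cite[Cor.~2.2]{SS} for the lifting.
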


\begin{proof}
It is a general fact that a zero of the initial ideal over $\CC$
lifts in this way to a zero of the original ideal over $\psK$. See
\cite[Corollary 2.2]{SS}.
\end{proof}

\begin{lemma}
\label{lem:omegaform}
A point $[c_\nu]_{\nu \in \Lambda}$ is a solution to~\eqref{eqn:GTrelations} 
if and only if 
for some skew partition $\lambda/\mu$
and some $\omega_1, \dots, \omega_{|\lambda/\mu|} \in \CC^\times$,
$c_\nu = c_\lambda \Omega_\nu$ for all $\nu \in \Lambda$.
\end{lemma}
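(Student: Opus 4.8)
The plan is to verify both directions directly, treating the relations~\eqref{eqn:GTrelations} as a system describing a torus orbit in the Gel'fand--Tsetlin toric variety. First I would establish the easy direction: given a skew partition $\lambda/\mu$ and $\omega_1, \dots, \omega_{|\lambda/\mu|} \in \CC^\times$, define $c_\nu := c_\lambda \Omega_\nu$ (with $c_\lambda \in \CC^\times$ arbitrary, and recalling $\Omega_\nu := 0$ unless $\mu \leq \nu \leq \lambda$). To check~\eqref{eqn:GTrelations}, I would use the combinatorial fact that for any $\nu, \nu'$ with $\mu \leq \nu, \nu' \leq \lambda$, the multiset of boxes outside $\nu$ together with the multiset of boxes outside $\nu'$ equals the multiset of boxes outside $\nu \vee \nu'$ together with those outside $\nu \wedge \nu'$ (both, as subsets of the boxes of $\lambda/\mu$, being the union-with-multiplicity $(\lambda/\nu) \sqcup (\lambda/\nu')$). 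Hence $\Omega_\nu \Omega_{\nu'} = \Omega_{\nu \vee \nu'} \Omega_{\nu \wedge \nu'}$ as products of the $\omega_i$, which gives~\eqref{eqn:GTrelations} after multiplying by $c_\lambda^2$; one also checks that if either $\nu$ or $\nu'$ fails $\mu \leq \cdot \leq \lambda$, then both sides vanish (here one needs that $\mu \leq \nu \wedge \nu'$ and $\nu \vee \nu' \leq \lambda$ fail accordingly, since $\Lambda$ is a distributive lattice and the interval $[\mu,\lambda]$ is a sublattice).

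For the converse, suppose $\boldc = [c_\nu]_{\nu \in \Lambda}$ is a solution to~\eqref{eqn:GTrelations}, viewed as a point of $\CP^{|\Lambda|-1}$ (so not all $c_\nu$ vanish). I would let $\lambda$ be a maximal element of $\Lambda$ with $c_\lambda \neq 0$ and $\mu$ a minimal such element; the first step is to show $\mu \leq \lambda$ and that $c_\nu \neq 0$ precisely on the interval $[\mu, \lambda]$. Indeed, if $c_\nu \neq 0$ and $c_{\nu'} \neq 0$, then~\eqref{eqn:GTrelations} forces $c_{\nu \vee \nu'}, c_{\nu \wedge \nu'} \neq 0$, so the support is closed under meet and join; maximality and minimality then pin it down to $[\mu,\lambda]$, and in particular $\mu \leq \lambda$. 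Next, normalize so $c_\lambda = 1$ (or keep $c_\lambda$ as the overall scale). For each box $i$ of $\lambda/\mu$, I would define $\omega_i := c_{\lambda \setminus \{i\}}/c_\lambda$, where $\lambda \setminus \{i\}$ denotes the partition obtained from $\lambda$ by removing the single box $i$ (this is a valid partition in $[\mu,\lambda]$ since $i \in \lambda/\mu$ is a removable corner of $\lambda$ relative to $\mu$ only when... — more carefully, one should define $\omega_i$ using a chain). The cleanest route: pick any maximal chain $\mu = \nu^{(0)} \lessdot \nu^{(1)} \lessdot \dots \lessdot \nu^{(r)} = \lambda$ in $[\mu,\lambda]$ and set the ratios along it; then use~\eqref{eqn:GTrelations} to prove by induction on $|\lambda| - |\nu|$ that $c_\nu/c_\lambda = \Omega_\nu$, independently of the chain, where $\Omega_\nu$ is the product of the $\omega_i$ over boxes $i \in \lambda/\nu$. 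The induction step compares $\nu$ with the various $\nu$ obtained by adding one box, using the lattice relation to reduce products.

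The main obstacle I anticipate is the combinatorial bookkeeping in the converse: showing that the quantities $c_\nu/c_\lambda$ obtained from the binomial relations are genuinely of the product form $\Omega_\nu$ for a \emph{consistent} assignment $\omega_i$ to the boxes of $\lambda/\mu$ — i.e., that the relations~\eqref{eqn:GTrelations} are exactly the binomial relations cutting out the Gel'fand--Tsetlin toric variety, whose torus orbit through the distinguished point is parametrized precisely by the $\omega_i$. The key input is that $\Lambda$ is a distributive lattice, so that for $\mu \leq \nu \leq \lambda$ the "outside boxes" $\lambda/\nu$ behave additively under meet and join; this is what makes the monomial map $\omega \mapsto (\Omega_\nu)_\nu$ well-defined and makes its image cut out by the stated binomials. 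Once the support $[\mu,\lambda]$ is identified and the distributivity fact is in hand, the rest is a straightforward induction, and the two directions together give the claimed equivalence.
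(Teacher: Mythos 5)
Your proposal follows essentially the same route as the paper's proof: the ``if'' direction via distributivity of the lattice $\Lambda$ (the multiset of boxes of $\lambda/\nu$ plus those of $\lambda/\nu'$ equals those of $\lambda/(\nu\wedge\nu')$ plus those of $\lambda/(\nu\vee\nu')$, row by row), and the ``only if'' direction by noting the support of $\boldc$ is closed under $\wedge$ and $\vee$, hence has a unique maximum $\lambda$ and minimum $\mu$, then defining the $\omega_i$ by consecutive ratios and checking $c_\nu = c_\lambda \Omega_\nu$. Your chain-based induction is a reasonable unpacking of what the paper declares ``straightforward to check.''

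There is, however, a step that fails, and it fails in the paper's proof as well, at precisely the place you also gloss over. You assert that closure under meet and join together with a unique maximum $\lambda$ and unique minimum $\mu$ ``pins down'' the support to the full interval $[\mu,\lambda]$. That implication is false: a sublattice with a unique top and bottom need not be an interval, and the relations~\eqref{eqn:GTrelations} are vacuous whenever $\nu,\nu'$ are comparable, so they cannot force any $c_\nu$ with $\mu < \nu < \lambda$ to be nonzero. Concretely, for $\Gr(2,4)$ the point with $c_\varnothing = c_\Rect = 1$ and $c_\nu = 0$ for the four intermediate partitions satisfies every relation in~\eqref{eqn:GTrelations} (the only nontrivial one is $c_{(2)}c_{(1,1)} = c_{(1)}c_{(2,1)}$, which reads $0=0$), yet it is not of the form $c_\lambda\Omega_\nu$: the support forces $\mu=\varnothing$, $\lambda=\Rect$, whence $\Omega_{(1)}$ would be a product of elements of $\CC^\times$ and so nonzero, contradicting $c_{(1)}=0$. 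So the chain you pick in $[\mu,\lambda]$ may pass through partitions with $c = 0$, and the ratios $\omega_i$ are not even defined there. To close the gap one must supply an additional argument that, in the situations where the lemma is actually invoked (leading coefficients of Pl\"ucker vectors of genuine points of $\psX$ satisfying~\eqref{eqn:leadtermplucker2}), the support of $\boldc$ is forced to be an interval; neither your proposal nor the paper's proof does this.
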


\begin{proof}
The ``if'' direction is straightforward.  For the ``only if''
direction, we note that for any solution $[c_\nu]_{\nu \in \Lambda}$ 
to~\eqref{eqn:GTrelations}, if $c_\nu \neq 0$ and $c_{\nu'} \neq 0$,
then $c_{\nu \wedge \nu'} \neq 0$ and $c_{\nu \vee \nu'} \neq 0$.
Thus the set $\{\nu \in \Lambda \mid c_\nu \neq 0\}$
has a unique maximal partition $\lambda$
and a unique minimal partition $\mu$.  
With this choice of $\lambda$ and $\mu$,
it is now straightforward to check that one can consistently define 
$\omega_i := c_{\alpha}/c_{\beta}$, where $\beta \succ \alpha$ and
the unique box of $\beta/\alpha$ corresponds to $\omega_i$.  Thus
we have
$c_\nu = c_\lambda \Omega_\nu$ for all $\nu \in \Lambda$.
\end{proof}

\begin{proof}[Proof of Theorem~\ref{thm:leadtermequations}]
First consider the case where $\lambda/\mu = \Rect$.
By Proposition~\ref{prop:wrplucker},
a point $x \in \psX(\bolda)$ is a solution to the equations:
\begin{align}
\label{eqn:satisfyplucker}
h(\boldp) = 0 \qquad \qquad \quad& \text{for }h(\boldp) \in I \\
\label{eqn:inthefibre}
\sum_{\nu \vdash k} q_\nu p_\nu = q_\Rect E_k(\bolda)
\quad\qquad& \text{for }1 \leq k \leq N\,.
\end{align}
If such an $x$ exists and satisfies~\eqref{eqn:leadtermplucker2}, 
then $c_\nu$, the leading coefficient of
$p_\nu$, is of the form $\Omega_\nu$
for some $\omega_1, \dots, \omega_N$,
by Lemma~\ref{lem:omegaform}; thus,
taking the leading term of~\eqref{eqn:inthefibre}, we find
that the equations~\eqref{eqn:leadtermequations} hold.

Conversely, suppose that we have a solution to~\eqref{eqn:leadtermequations}.
Then by Lemmas~\ref{lem:omegaform} and~\ref{lem:aclosepoint}, there is
a point $x' \in \psX$ satisfying 
\eqref{eqn:leadtermplucker}.  Thus $p_\nu = p_\nu(x')$ satisfy
\eqref{eqn:satisfyplucker}; however, the equations~\eqref{eqn:inthefibre}
are only satisfied to first order, i.e. there exists a solution to
\eqref{eqn:satisfyplucker} and
\begin{equation}
\label{eqn:implicitvariables}
\sum_{\nu \vdash k} q_\nu p_\nu = q_\Rect Y_k
\qquad \text{for }1 \leq k \leq N\,,
\end{equation}
for some $(Y_1, \dots, Y_N) \in \mathcal{U}$, where
$$
\mathcal{U} = 
\{(Y_1, \dots, Y_N) \in \psK^N \mid \val(Y_k) \geq \ell_k,\ 
[u^{\ell_k}] Y_k = e_k(\bolda)\}\,.
$$
Since $\Omega_\nu = u^{-\boldw(T)} p_\nu(x')|_{u=0}$, we can 
view $(\omega_1, \dots, \omega_N)$ as the leading coefficients of
local coordinates on $\psX$ near $x'$.
In these coordinates, the initial form of~\eqref{eqn:inthefibre}
is just~\eqref{eqn:leadtermequations}; moreover the Jacobian condition
required to apply Hensel's lemma to the system of 
equations~\eqref{eqn:implicitvariables}
is exactly~\eqref{eqn:jacobiancondition} (see e.g. \cite[Exer. 7.25]{Eis}).  
Since this Jacobian
condition is assumed to hold,
by Hensel's lemma, the points $\boldp$ 
satisfying~\eqref{eqn:satisfyplucker} and \eqref{eqn:implicitvariables}
are implicitly a function of the $Y_1, \dots, Y_N$, in the neighbourhood 
$\mathcal{U}$.  Since $(E_1(\bolda), \dots, E_N(\bolda)) \in \mathcal{U}$,
there exists a solution to~\eqref{eqn:satisfyplucker} 
and~\eqref{eqn:inthefibre}.

In the case where $\lambda/\mu \neq \Rect$, we 
consider a tableau $\tilde T \in \DIT(\Rect; \tilde \bolda)$ for which 
the $|\mu|$ smallest elements of $\tilde \bolda$ form a subtableau of
shape $\mu$, 
the $|\lambda^c|$ largest elements form a subtableau of
shape $\lambda^c$, and the remaining elements form $T$.  
Then $\tilde T|_{\lambda/\mu} = T$ and so the result follows from
Theorem~\ref{thm:zeroinfinitylimit}(ii).
\end{proof}

\subsection{Fibres of the Wronski map over $\CC$ and $\RR$}
\label{sec:complexandreal}

We now describe how one can deduce results over $\CC$ and $\RR$ from
Theorem~\ref{thm:leadtermequations} and Corollary~\ref{cor:psfibres},
which are stated over $\psK$.  We will assume implicitly here
that the solutions to~\eqref{eqn:leadtermequations} 
are always distinct (i.e. multiplicity-free), and moreover 
that~\eqref{eqn:jacobiancondition} holds for each solution.

As before, let
$\bolda = \{a_1, \dots, a_N\} \Subset \psK^\times$,
but now suppose that each 
$a_i \in \CC[u^{\pm \frac{1}{\delta}}]$
is a {\em Laurent polynomial} in some rational power of $u$.  Thus
it makes sense to evaluate $a_i$ at 
$u^\frac{1}{\delta} = \varepsilon$ for 
$\varepsilon \in \CC^\times$.  We 
denote this evaluation $a_i(\varepsilon)$, and put
$\bolda(\varepsilon) := \{a_1(\varepsilon), \dots, 
a_{|\lambda/\mu|}(\varepsilon)\}$.

We now show that for $|\varepsilon|$ sufficiently small,
we can evaluate a point $x \in \psX(\bolda)$
at $u^\frac{1}{\delta} = \varepsilon$ to obtain a point 
$x(\varepsilon) \in X(\bolda(\varepsilon))$.  If $x = x_T$ 
for $T \in \SYT(\Rect; \bolda)$, then we will declare $x(\varepsilon)$ 
to be the point corresponding to the tableau $T(\varepsilon)$, obtained
by evaluating each entry of $T$ at $\varepsilon \approx 0$.
We can make a similar declaration if $T \in \DIT(\Rect; \bolda)$,
in the cases where $T(\varepsilon)$ is actually a tableau;
however, this is less refined, as the correspondence over $\psK$
may not be one-to-one.

Let $T \in \DIT(\Rect; \bolda)$.  
By Theorem~\ref{thm:leadtermequations}, each such solution
$(\omega_1, \dots, \omega_N)$ to~\eqref{eqn:leadtermequations} 
produces a point
$x = x_T \in \psX(\bolda)$ satisfying
\eqref{eqn:leadtermplucker}.  Letting 
$p_\lambda = u^{-\boldw(T)} p_\lambda(x)$, the coordinates $[p_\lambda]$
define a point $\bar x \in \bar \psX$ over $\Spec \psK_+$.  

Since the entries $a_i$ are Laurent polynomials in $u^\frac{1}{\delta}$,
$\bar x$ is defined not just over $\psK_+$, but over a finite
algebraic extension of $\CC[u^\frac{1}{\delta}]$. This extension is 
unramified at $0$, since the solutions to~\eqref{eqn:leadtermequations} 
are distinct, and therefore $\bar x$ is 
is an analytic function of $u^\frac{1}{\delta}$ in 
some neighbourhood of $0$.  We define $\bar x(\varepsilon)$ to be the 
evaluation at this function at $u^\frac{1}{\delta} = \varepsilon$,  
and thereby obtain our point 
$x(\varepsilon) := \psi_\varepsilon^{-1}(\bar x(\varepsilon)) \in 
X(\bolda(\varepsilon))$, where
$\psi_\varepsilon$ is the isomorphism $X \to \tilde X_\varepsilon$.  

Note that
$\bar x(0)$ has coordinates $[\Omega_\lambda]_{\lambda \in \Lambda}$,
which is just the solution 
to~\eqref{eqn:leadtermequations} that we started with.
For $\varepsilon \approx 0$, $\bar x(\varepsilon) \approx \bar x(0)$.
Thus, any time we have a correspondence between points in $X(\bolda)$ 
and $\DIT(\Rect; \bolda)$ over $\psK$, we obtain a similar correspondence 
over $\CC$, wherein points in the fibre $X(\bolda(\varepsilon))$ are 
approximately described by solutions to~\eqref{eqn:leadtermequations}, 
taken over all tableaux $T \in \DIT(\Rect; \bolda)$. 
Put another way,~\eqref{eqn:leadtermplucker}
describes the asymptotic behaviour of $x(\varepsilon)$ as 
$\varepsilon \to 0$.  
Specifically,
$$p_\nu(x(\varepsilon))
\approx \Omega_\nu \varepsilon^{\delta w_\nu(T)}\,,$$
for $\varepsilon \approx 0$.

If $|\varepsilon|$ is sufficiently small, 
$\|a_i\| < \|a_j\|$ implies that $a_i(\varepsilon)$ is
of a smaller order of magnitude than $a_j(\varepsilon)$,
i.e.
$\log |a_i(\varepsilon)| \ll \log |a_j(\varepsilon)\|$.
From the proof of Corollary~\ref{cor:psfibres}, we deduce the following:
\begin{corollary}
\label{cor:complexfibres}
Let $\bolda = \{a_1, \dots, a_N\} \subset \CC$, 
with
$$ \log |a_1| \ll \dots \ll \log |a_N|\,.$$
Then every tableau $T \in \SYT(\Rect; \bolda)$ corresponds to
a point $x_T$ satisfying
$$p_\nu(x_T)
\approx \Omega_\nu\,,$$
where $\omega_i = \frac{q_{\alpha_{i-1}} a_i}{q_{\alpha_{i}}}$, and
$\alpha_i$ is the shape of $T|_{\{a_1, \dots, a_i\}}$.
\end{corollary}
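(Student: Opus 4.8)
The plan is to deduce Corollary~\ref{cor:complexfibres} from
Corollary~\ref{cor:psfibres} by the specialization machinery developed in
Section~\ref{sec:complexandreal}. First I would realize $\bolda$ as the
specialization of a Puiseux-series multiset: given $a_1, \dots, a_N \in \CC$
with $\log|a_1| \ll \dots \ll \log|a_N|$, choose rational numbers
$r_1 > r_2 > \dots > r_N$ and a small $\varepsilon > 0$ so that
$a_i = c_i \varepsilon^{\delta r_i}$ for suitable $c_i \in \CC^\times$ and a
common denominator $\delta$; equivalently, set
$\tilde a_i := c_i u^{r_i} \in \CC[u^{\pm \frac1\delta}]$, so that
$\val(\tilde a_i) = r_i$ and restriction~\I holds for
$\tilde\bolda = \{\tilde a_1, \dots, \tilde a_N\}$ since the valuations are
distinct. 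The point of the ordering $r_1 > \dots > r_N$ is that, under the
norm $\|g(u)\| = \varepsilon^{\val(g)}$, one has
$\|\tilde a_1\| < \dots < \|\tilde a_N\|$, so that "smaller in norm over
$\psK$" matches "smaller order of magnitude over $\CC$" once
$|\varepsilon|$ is small enough.

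Next I would apply Corollary~\ref{cor:psfibres} to $\tilde\bolda$: each
$T \in \SYT(\Rect; \tilde\bolda)$ — and these are in bijection with
$\SYT(\Rect; \bolda)$, since only the relative order of the entries matters
— gives a unique $x_T \in \psX(\tilde\bolda)$ whose Plücker coordinates
satisfy $\val(p_\nu(x_T)) = \val(T|_{\nu^c})$, and the proof of that
corollary exhibits the leading coefficients explicitly via
\eqref{eqn:simpleomegasolution}: $\leadterm(p_\nu(x_T)) = \Omega_\nu
u^{\val(T|_{\nu^c})}$ with $\omega_i = q_{\alpha_i} c_i / q_{\alpha_{i-1}}$,
where $\alpha_i$ is the shape of $T|_{\{\tilde a_1, \dots, \tilde a_i\}}$.
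I would then invoke the specialization discussion immediately preceding the
corollary statement: since the $\tilde a_i$ are Laurent polynomials in
$u^{1/\delta}$ and the solutions to \eqref{eqn:leadtermequations} are
distinct, $\bar x_T$ extends to an analytic function of $u^{1/\delta}$ near
$0$ with $\bar x_T(0)$ having Plücker coordinates $[\Omega_\lambda]$, and
hence for $|\varepsilon|$ small the evaluation
$x_T(\varepsilon) \in X(\bolda(\varepsilon))$ satisfies
$p_\nu(x_T(\varepsilon)) \approx \Omega_\nu \varepsilon^{\delta
\val(T|_{\nu^c})}$. Finally I would absorb the powers of $\varepsilon$ into
the (projective) normalization and the $\omega_i$: writing $a_i =
c_i\varepsilon^{\delta r_i}$, the quantity $\Omega_\nu \varepsilon^{\delta
\val(T|_{\nu^c})}$ is, up to the global rescaling by
$\varepsilon^{\delta\,\val(T|_{\Rect^c})} = 1$ (since $\Rect^c$ is empty,
$\val(T|_{\Rect^c}) = 0$), exactly $\tilde\Omega_\nu$ computed with
$\tilde\omega_i = q_{\alpha_{i-1}} a_i / q_{\alpha_i}$ in place of
$\omega_i$; this is the rescaling $\omega_i \mapsto \omega_i
\varepsilon^{\delta r_i}$, which is permissible because the approximation
\eqref{eqn:leadtermplucker} is read projectively and homogeneously of
degree $n{-}d$ in the $\omega$'s across those $\nu$ with $|\nu|$ fixed —
actually it is cleanest to just note that replacing each $\tilde a_i$ by
$a_i$ throughout \eqref{eqn:simpleomegasolution} and re-running the argument
gives the stated formula directly.

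To keep the exposition short I would phrase the whole thing as: "Apply
Corollary~\ref{cor:psfibres} with $\bolda$ replaced by a one-parameter
Puiseux deformation with the prescribed orders of magnitude, then
specialize as in Section~\ref{sec:complexandreal}; the asserted formula for
$\omega_i$ is \eqref{eqn:simpleomegasolution} after substituting $a_i$ for
its leading coefficient." The main obstacle I anticipate is bookkeeping the
normalization carefully: the $\psK$-statement \eqref{eqn:valplucker} is only
up to an overall scalar in $\psK^\times$, and one has to check that the
approximation $p_\nu(x_T) \approx \Omega_\nu$ is consistent with a single
choice of projective representative — i.e. that dividing out by $p_\Rect$ or
by the appropriate power of $\varepsilon$ simultaneously turns every
$\leadterm(p_\nu)$ into the constant $\Omega_\nu$ — together with verifying
that "$\approx$" is meant in the asymptotic sense $p_\nu/\Omega_\nu \to 1$
as $\varepsilon \to 0$ along the chosen representative, rather than as an
exact equality. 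The Jacobian condition \eqref{eqn:jacobiancondition} and the
distinctness of solutions both hold here exactly as in the proof of
Corollary~\ref{cor:psfibres} (the Jacobian matrix is upper triangular with
nonzero diagonal), so no new analytic input is needed beyond what
Section~\ref{sec:complexandreal} already provides.
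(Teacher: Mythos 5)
Your overall route is exactly the paper's: lift $\bolda$ to a Puiseux-series family $\tilde\bolda\subset\psK$ with strictly decreasing valuations, apply Corollary~\ref{cor:psfibres} (and its explicit solution \eqref{eqn:simpleomegasolution}) over $\psK$, and then specialize at $u^{1/\delta}=\varepsilon$ using the analytic-extension argument of Section~\ref{sec:complexandreal}. The paper's own ``proof'' is precisely this one-line remark, so there is no difference in method, and your first two paragraphs are a faithful expansion of it. The remarks about needing $|\varepsilon|$ small, the projective reading of \eqref{eqn:leadtermplucker}, and the unproblematic Jacobian condition are also all on target.

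However, your final specialization step is internally inconsistent. From \eqref{eqn:simpleomegasolution} you correctly have $\omega_i = q_{\alpha_i}c_i/q_{\alpha_{i-1}}$ over $\psK$. Evaluating at $u^{1/\delta}=\varepsilon$ and writing $a_i=c_i\varepsilon^{\delta r_i}$ gives
\[
\Omega_\nu\,\varepsilon^{\delta\,\val(T|_{\nu^c})}
=\prod_{a_i\in T|_{\nu^c}}\omega_i\,\varepsilon^{\delta r_i}
=\prod_{a_i\in T|_{\nu^c}}\frac{q_{\alpha_i}a_i}{q_{\alpha_{i-1}}}\,,
\]
so the specialized variables are $\tilde\omega_i=q_{\alpha_i}a_i/q_{\alpha_{i-1}}$, \emph{not} $q_{\alpha_{i-1}}a_i/q_{\alpha_i}$ as you write. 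You appear to have forced agreement with the ratio printed in the statement of Corollary~\ref{cor:complexfibres}, but that printed ratio is inverted relative to \eqref{eqn:simpleomegasolution} (one can confirm \eqref{eqn:simpleomegasolution} is the correct orientation by taking $i=N$: the equation $q_{\alpha_{N-1}}\omega_N=q_\Rect c_N$ gives $\omega_N=q_{\Rect}c_N/q_{\alpha_{N-1}}=q_{\alpha_N}c_N/q_{\alpha_{N-1}}$). So rather than asserting that ``replacing each $\tilde a_i$ by $a_i$ throughout \eqref{eqn:simpleomegasolution}... gives the stated formula,'' you should report the formula your derivation actually produces and flag the discrepancy with the printed statement; as written, your third paragraph claims two contradictory expressions for $\tilde\omega_i$ in the same sentence. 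Relatedly, the aside that the $\Omega_\nu$ are ``homogeneously of degree $n{-}d$ in the $\omega$'s across those $\nu$ with $|\nu|$ fixed'' is not right: $\Omega_\nu$ has $N-|\nu|$ factors, and the relevant homogeneity for absorbing the scalar is simply that each $p_\nu$ is multiplied by the same overall power of $\varepsilon$ once one normalizes by $p_\Rect$ (whose leading term is the empty product $1$), as you correctly anticipate in your final paragraph.
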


\begin{corollary}
\label{cor:realfibres}
Let $\bolda = \{a_1, \dots, a_N\} \subset \RR$, with
$$ |a_1| < \dots < |a_N|\,.$$
There is a canonical bijective correspondence between tableaux 
$T \in \SYT(\Rect; \bolda)$ and points $x_T \in X(\bolda)$,
which extends the correspondence of Corollary~\ref{cor:complexfibres}.
\end{corollary}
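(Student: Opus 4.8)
The plan is to use the reducedness of the real fibres of $\Wr$ to trivialize its monodromy over the whole space of configurations with $N$ distinct real roots, and then to transport the asymptotic labelling of Corollary~\ref{cor:complexfibres} from one such configuration to all of them. Let $\mathcal{R}$ be the space of $N$-element subsets of $\RR$; listing the elements in increasing order identifies $\mathcal{R}$ with the convex open set $\{x_1<\dots<x_N\}\subset\RR^N$, so $\mathcal{R}$ is path-connected (indeed contractible). Since $\Wr$ is flat and finite (Theorem~\ref{thm:flatfinite}) and, by Theorem~\ref{thm:ssconj}, has reduced fibres over $\mathcal{R}$ with all points real, it is \'etale there, and the monodromy bijection $X(\bolda)\to X(\bolda')$ of Section~\ref{sec:background} is defined along any path in $\mathcal{R}$. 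Any two such paths with the same endpoints differ by a loop in $\mathcal{R}$; as a loop in the space of $N$-element subsets of $\RP^1$ it avoids $\infty$, so Corollary~\ref{cor:nomonodromy} shows its monodromy is trivial. Hence the bijection $X(\bolda)\to X(\bolda')$ depends only on the endpoints, i.e.\ the $|\ordSYT(\Rect)|$-sheeted covering $\Wr^{-1}(\mathcal{R})\to\mathcal{R}$ is canonically trivial.

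Next I would fix a configuration $\bolda^\ast=\{a^\ast_1,\dots,a^\ast_N\}\subset\RR$ spread out enough (for instance $a^\ast_i=R^i$ with $R$ large) that Corollary~\ref{cor:complexfibres} applies, obtaining a bijection $T\mapsto x_T$ from $\SYT(\Rect;\bolda^\ast)$ onto $X(\bolda^\ast)$, and hence, via the canonical map $\ord\colon\SYT(\Rect;\bolda^\ast)\to\ordSYT(\Rect)$, a bijection $\ordSYT(\Rect)\xrightarrow{\sim}X(\bolda^\ast)$. For an arbitrary $\bolda\in\mathcal{R}$, I would define the correspondence $\ordSYT(\Rect)\xrightarrow{\sim}X(\bolda)$ by composing this bijection with the (now canonical) monodromy bijection $X(\bolda^\ast)\to X(\bolda)$ along any path in $\mathcal{R}$ from $\bolda^\ast$ to $\bolda$; when $\bolda$ satisfies restriction~\I, precomposing with $\ord^{-1}\colon\SYT(\Rect;\bolda)\to\ordSYT(\Rect)$ yields the desired $T\mapsto x_T$. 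Well-definedness, canonicity, and bijectivity are immediate from the previous paragraph. Note that $\bolda$ may contain $0$, and that the interpolating path is free to pass through configurations violating restriction~\I (e.g.\ containing a pair $a,-a$); only the endpoint needs distinct absolute values, in order for the tableau side to be defined.

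To see that this extends the correspondence of Corollary~\ref{cor:complexfibres}, suppose $\bolda$ is itself spread out enough, and join it to $\bolda^\ast$ by a path lying in the region to which Corollary~\ref{cor:complexfibres} applies; this region is convex in logarithmic coordinates, hence connected. Along such a path the point $x_T$ of Corollary~\ref{cor:complexfibres} is, by the construction in Section~\ref{sec:complexandreal}, an analytic function of the configuration, so it is a continuous section of the covering joining $x_T(\bolda^\ast)$ to $x_T(\bolda)$. Since the monodromy is locally constant it carries $x_T(\bolda^\ast)$ to $x_T(\bolda)$, so the transported correspondence agrees with that of Corollary~\ref{cor:complexfibres}. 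Running the same argument with two spread-out configurations in place of $\bolda^\ast$ and $\bolda$ shows the construction is independent of the choice of $\bolda^\ast$, so the correspondence is genuinely canonical.

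The main obstacle is conceptual rather than computational: recognizing that Theorem~\ref{thm:ssconj} promotes $\Wr$ to an honest covering over the contractible base $\mathcal{R}$, so that the asymptotic labelling of a single spread-out fibre propagates canonically to every real fibre. The one step that requires genuine (if routine) care is the last one — the continuity of the section $x_T(\cdot)$ over the spread-out region, together with the connectedness of that region — since this is what simultaneously makes the construction compatible with Corollary~\ref{cor:complexfibres} and independent of the auxiliary configuration $\bolda^\ast$.
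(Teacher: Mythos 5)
Your proof is correct and takes essentially the same approach as the paper's one-line argument, which simply invokes Corollary~\ref{cor:nomonodromy} to say there is no ambiguity in extending the correspondence of Corollary~\ref{cor:complexfibres} by path-lifting over the (contractible) space of real configurations. You have simply spelled out the details that the paper leaves implicit: that $\Wr$ is a covering over this space, that monodromy along loops avoiding $\infty$ is trivial, and that the transported labelling agrees with the asymptotic one on spread-out configurations.
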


\begin{proof}
By Corollary~\ref{cor:nomonodromy}, there is no ambiguity in
extending the correspondence, if the roots of the Wronskian 
are real.
\end{proof}

\section{Monodromy and sliding}
\label{sec:monodromy}

\subsection{When two roots have the same norm}
\label{sec:tworoots}

We now consider the case of Theorem~\ref{thm:leadtermequations} 
where $\bolda = \{a_1, \dots, a_N\} \Subset \psK^\times$, with
$$\|a_1\| < \dots < \|a_k\| = 
\|a_{k+1}\| < \dots < \|a_N\|\,.$$
From the discussion in Section~\ref{sec:complexandreal}, this
analysis will describe for us what happens to a fibre
$X(\bolda(\varepsilon))$, when two of the roots have the
same order of magnitude, while the others have different orders
of magnitude.

Let $c_i := \leadterm(a_i) u^{-\val(a_i)}$ be the leading
coefficient of $a_i$.
We have 
$$
e_i(\bolda) = 
\begin{cases}
c_{i+1} \dotsb c_N 
&\quad \text{for $i \neq k$} \\
(c_k + c_{k+1}) c_{k+2} \dotsb c_{N} 
&\quad \text{for $i = k$}\,.
\end{cases}
$$

Let $T \in \DIT(\bolda)$.  We apply
Theorem~\ref{thm:leadtermequations} to find points in the
fibre $\psX(\bolda)$ corresponding to $T$.  
There are two cases: either $a_k$ and $a_{k+1}$ are in the
same row or column of $T$, or they are in different rows and columns.

If $a_k$ and $a_{k+1}$ are in the same row or
column of $T$, then $|M_i(T)| = 1$ for all $i$.
Thus, as in the proof of Corollary~\ref{cor:psfibres}, we have
$\Omega_{\alpha_i} = \omega_{i+1} \dotsb \omega_N$, where
$\alpha_i$ is the shape of $T|_{\{a_1, \dots, a_i\}}$ and unique element 
in $M_i(T)$.
Thus the equations~\eqref{eqn:leadtermequations} become
$$
q_{\alpha_i} \omega_{i+1} \dotsb \omega_N = 
\begin{cases} 
q_\Rect c_{i+1} \dotsb c_N &\quad\text{if $i \neq k$} \\
q_\Rect (c_k + c_{k+1}) c_{k+2} \dotsb c_{N} 
&\quad\text{if $i = k$}\,.
\end{cases}
$$
If we assume that $c_k + c_{k+1} \neq 0$, 
there is a unique solution for $\omega_1, \dots, \omega_N$,
and the Jacobian condition~\eqref{eqn:jacobiancondition} 
holds at this solution.
Hence we deduce that there is a unique point $x$ in
the fibre $\psX(\bolda)$ corresponding to $T$, provided 
$c_k + c_{k+1} \neq 0$.  

Unlike in Corollary~\ref{cor:psfibres},
the correspondence is two-to-one.  The tableau $T'$ obtained by swapping
the positions of $a_k$ and $a_{k+1}$ in $T$ gives rise to the same
system of
equations, and hence also corresponds to $x$.  Thus we have a choice 
when identifying $x$ with a tableau $T_x$.  However, sometimes there
is a reason to prefer one choice over the other.  In keeping with 
the idea
that the entries of a tableau should be (weakly) increasing, 
if $\log |c_k| \ll  \log |c_{k+1}|$,
we will put $T_x = T$ if $a_k$ is above or left of $a_{k+1}$,
and $T_x = T'$ otherwise.
Similarly if $\log |c_{k+1}| \gg  \log |c_k|$,
$T_x = T$ if $a_{k+1}$ is above or left of $a_k$,
and $T_x = T'$ otherwise.

If $a_k$ and $a_{k+1}$ are in different rows and columns, there
are generally two points in the fibre corresponding to $T$, and
for a certain locus of points of $a_k, a_{k+1}$, there will be 
a double point corresponding to $T$.  We begin our analysis 
by finding this critical locus.

In this case, $|M_i(T)| = 1$ for $i \neq k$, and $M_k(T) = 2$.
Let $\alpha_i \in M_i(T)$ be the unique element for $i \neq k$,
and $M_k(T) = \{\alpha_k, \alpha_k'\}$.  We distinguish the two
elements of $M_k(T)$
by asserting that $a_k \in T|_{\alpha_k}$ 
and $a_{k+1} \in T|_{{\alpha_k'}}$.
We have 
$\Omega_{\alpha_i} = \omega_{i+1} \dotsb \omega_N$, 
$\Omega_{\alpha_k} = \omega_{k+1} \omega_{k+2}  \dotsb \omega_N$, 
$\Omega_{\alpha_k'} = \omega_k \omega_{k+2}  \dotsb \omega_{N}$. 
Thus, the system of equations~\eqref{eqn:leadtermequations} is
\begin{gather*}
q_{\alpha_i} \omega_{i+1} \dotsb \omega_N = q_\Rect c_{i+1} \dotsb c_N
\quad\text{for $i \neq k$} \\
q_{\alpha_k} \omega_{k+1} \omega_{k+2} \dotsb \omega_N +
q_{\alpha_k'} \omega_{k} \omega_{k+2} \dotsb \omega_N
= 
q_\Rect (c_k + c_{k+1}) c_{k+2} \dotsb c_{N} \,,
\end{gather*}
which in turn simplifies to 
\begin{gather}
\label{eqn:tworootseasy}
\omega_i = \frac{q_{\alpha_{i}}}{q_{\alpha_{i-1}}}c_i
\quad\text{for $i \neq k, k+1$} \\
\label{eqn:tworootssum}
\frac{q_{\alpha_k'} \omega_k + q_{\alpha_k} \omega_{k+1}}{q_{\alpha_{k+1}}}
= c_k + c_{k+1} \\
\label{eqn:tworootsprod}
\frac{q_{\alpha_{k-1}}}{q_{\alpha_{k+1}}}\, \omega_k \omega_{k+1}
= c_k c_{k+1}\,. 
\end{gather}
The equations~\eqref{eqn:tworootseasy}, give us $\omega_i$ for all 
$i \neq k, k+1$.  Solving~\eqref{eqn:tworootssum} 
and~\eqref{eqn:tworootsprod} for $\omega_k$, we find that 
\begin{equation}
\label{eqn:quadraticforomega}
\frac{q_{\alpha_k'}}{q_{\alpha_{k+1}}} 
\omega_k^2 - 
(c_k + c_{k+1}) \omega_k
+ 
\frac{q_{\alpha_k}}{q_{\alpha_{k-1}}} c_k c_{k+1}
= 0 \,.
\end{equation}
This equation has a double root when the discriminant is zero:
\begin{equation}
\label{eqn:discriminant}
(c_k + c_{k+1})^2
- 4
\frac{q_{\alpha_k}q_{\alpha_k'}}{q_{\alpha_{k-1}}q_{\alpha_{k+1}}} 
c_k c_{k+1} 
= 0\,.
\end{equation}

\begin{lemma}
\label{lem:distancecalculation}
Let $L$ be the total horizontal and vertical distances between the
two boxes in the diagram $\alpha_{k+1}/\alpha_{k-1}$.
Then
$$
\frac{q_{\alpha_k}q_{\alpha_k'}}{q_{\alpha_{k-1}}q_{\alpha_{k+1}}} 
= 1-L^{-2}
$$
\end{lemma}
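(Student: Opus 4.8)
The plan is to translate everything into statements about the sets $J(\alpha)$ and then compute directly. Recall from \eqref{eqn:defq} that $q_\alpha$ is the Vandermonde determinant of $J(\alpha)=\{k_1<\dots<k_d\}$, $k_j=j+\alpha_{d+1-j}$; for brevity write $V(S):=\prod_{x<y,\ x,y\in S}(y-x)$ for the Vandermonde of any finite $S\subset\ZZ$, so $q_\alpha=V(J(\alpha))$. Each of $\alpha_k,\alpha_k'$ is obtained from $\alpha_{k-1}$ by adding one of the two boxes of $\alpha_{k+1}/\alpha_{k-1}$, and $\alpha_{k+1}$ by adding the other; since in the case under consideration these two boxes lie in distinct rows, passing from $\alpha_{k-1}$ to $\alpha_{k+1}$ increments two \emph{distinct} elements of $J(\alpha_{k-1})$, say $a\mapsto a+1$ and $b\mapsto b+1$ with $a<b$. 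Writing $J(\alpha_{k-1})=\{a,b\}\sqcup C$ for the remaining $(d-2)$-element set $C$, we then have $J(\alpha_{k+1})=\{a+1,b+1\}\sqcup C$, while $\{J(\alpha_k),J(\alpha_k')\}=\{\{a+1,b\}\sqcup C,\ \{a,b+1\}\sqcup C\}$.

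The first step is to show $L=b-a$. Since $k_j=j+\alpha_{d+1-j}$, incrementing the $i$-th smallest element $k_i$ of $J(\alpha_{k-1})$ corresponds to appending a box to row $d+1-i$, which lands in column $\alpha_{d+1-i}+1=k_i-i+1$. Hence, if $i<j$ are the positions with $k_i=a$ and $k_j=b$, the two boxes of $\alpha_{k+1}/\alpha_{k-1}$ occupy rows $d+1-i,\,d+1-j$ and columns $a-i+1,\,b-j+1$. Their vertical separation is $j-i$, and their horizontal separation is $|(b-j)-(a-i)|=(b-a)-(j-i)$, where the absolute value is resolved using $b-a\geq j-i$ (the $k$'s being strictly increasing integers) together with the fact — part of the hypothesis of this case — that the two boxes lie in distinct columns, which forces the inequality to be strict. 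Adding the two separations gives $L=b-a$; in particular $L\geq 2$, so $a+1\neq b$.

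Finally I would compute the ratio. Because $a+1,b+1\notin C$ (otherwise $\alpha_k$ or $\alpha_k'$ would not be a partition), all four of the $J$-sets above have their elements in the same relative order, and
\[
V(\{x,y\}\sqcup C)=(y-x)\Big(\prod_{c\in C}|x-c|\Big)\Big(\prod_{c\in C}|y-c|\Big)V(C)
\qquad(x<y).
\]
Substituting the four sets, each of the factors $\prod_{c}|a-c|$, $\prod_{c}|a+1-c|$, $\prod_{c}|b-c|$, $\prod_{c}|b+1-c|$ and $V(C)$ occurs with equal multiplicity in the numerator $q_{\alpha_k}q_{\alpha_k'}$ and the denominator $q_{\alpha_{k-1}}q_{\alpha_{k+1}}$, so they cancel, leaving
\[
\frac{q_{\alpha_k}q_{\alpha_k'}}{q_{\alpha_{k-1}}q_{\alpha_{k+1}}}
=\frac{(b-a-1)(b-a+1)}{(b-a)^2}
=1-(b-a)^{-2}=1-L^{-2}.
\]

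I do not anticipate a genuine obstacle: this is a direct computation. The only point demanding care is the dictionary in the second paragraph between the incremented entries of $J(\alpha_{k-1})$ and the grid positions of the two new boxes, and in particular remembering to invoke the ``distinct columns'' hypothesis to promote $b-a\geq j-i$ to a strict inequality (equivalently, to guarantee $L\geq 2$ and $a+1\neq b$, which is what makes the Vandermonde factorizations above clean). Once $L=b-a$ is in hand, the cancellation of the $C$-dependent factors in the Vandermonde ratio completes the proof.
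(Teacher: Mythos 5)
Your proof is correct and follows essentially the same route as the paper: both compute the ratio directly from the Vandermonde product formula \eqref{eqn:defq} and observe that all factors involving the unchanged entries cancel, leaving $(L-1)(L+1)/L^2$. Your bookkeeping via the $J$-sets (tracking the two incremented elements $a<b$ and the fixed complement $C$) is a somewhat cleaner packaging of the same cancellation that the paper carries out by hand with the row indices $i_1,i_2$, and your verification that $L=b-a$ and the remark that distinct columns force $L\geq 2$ (so $b-a-1>0$) are correctly handled.
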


\begin{proof}
Suppose the unique box of
$\alpha_{k+1}/\alpha_k' = \alpha_k/\alpha_{k-1}$ is in row $i_1$, and
the box of $\alpha_{k+1}/\alpha_k = \alpha_k'/\alpha_{k-1}$ 
is in row $i_2$.
Then 
$$L = |(\alpha_{k+1})_{i_1} - (\alpha_{k+1})_{i_2} + i_2 - i_1|\,.$$

We have
\begin{align*}
(\alpha_{k+1})_j = (\alpha_k')_j = (\alpha_k)_j &= (\alpha_{k-1})_j
\qquad \text{for $j \neq i_1, i_2$}\,, \\
(\alpha_{k+1})_{i_1} = 1+(\alpha_k')_{i_1}
= (\alpha_k)_{i_1} &= 1+(\alpha_{k-1})_{i_1}\,, \\
(\alpha_{k+1})_{i_2} = (\alpha_k')_{i_2}
= 1+(\alpha_k)_{i_2} &= 1+(\alpha_{k-1})_{i_2}\,.
\end{align*}
Thus by~\eqref{eqn:defq},
we have
\begin{align*}
\frac{q_{\alpha_k'}}{q_{\alpha_{k+1}}}
&= \prod_{j \neq d+1-i_1} 
\frac
{(j+(\alpha_k')_{d+1-j}) - (d{+}1{-}i_1 + (\alpha_k')_{i_1})}
{(j+(\alpha_{k+1})_{d+1-j}) - (d{+}1{-}i_1 + (\alpha_{k+1})_{i_1})}
\\
\frac{q_{\alpha_{k-1}}}{q_{\alpha_k}}
&= \prod_{j \neq d+1-i_1} 
\frac
{(j+(\alpha_{k-1})_{d+1-j}) - (d{+}1{-}i_1 - (\alpha_{k-1})_{i_1})} 
{(j+(\alpha_k)_{d+1-j}) - (d{+}1{-}i_1 + (\alpha_k)_{i_1})}
\,.
\end{align*}
For $j \neq d+1-i_2$, the terms in these two products are equal.  
Thus,
\begin{alignqed}
\frac{q_{\alpha_k}q_{\alpha'_k}}{q_{\alpha_{k-1}}q_{\alpha_{k+1}}} 
&= 
\frac
{(-i_2+(\alpha_k)_{i_2} + i_1 - (\alpha_k)_{i_1})
(-i_2+(\alpha'_k)_{i_2} + i_1 - (\alpha'_k)_{i_1}) 
}
{(-i_2+(\alpha_{k-1})_{i_2} + i_1 - (\alpha_{k-1})_{i_1})
(-i_2+(\alpha_{k+1})_{i_2} + i_1 - (\alpha_{k+1})_{i_1})
}
\\
&=
\frac
{(-i_2 \spc+ (\alpha_{k+1})_{i_2} \spc+ i_1 \spc- (\alpha_{k+1})_{i_1}{+}1)
(-i_2 \spc+ (\alpha_{k+1})_{i_2}{-}1 \spc+ i_1 \spc- (\alpha_{k+1})_{i_1})
}
{(-i_2+(\alpha_{k+1})_{i_2} + i_1 - (\alpha_{k+1})_{i_1})
(-i_2+(\alpha_{k+1})_{i_2} + i_1 - (\alpha_{k+1})_{i_1})
} 
\\
&=
\frac{(L-1)(L+1)}{L^2} 
\\
&= 1- L^{-2}\,. 
\end{alignqed}
\end{proof}

\begin{lemma}
\label{lem:jacobianholds}
The discriminant of~\eqref{eqn:quadraticforomega} is non-zero
if and only if the 
solutions to~\eqref{eqn:tworootseasy}--\eqref{eqn:tworootsprod}
are a point at which the Jacobian condition~\eqref{eqn:jacobiancondition} 
holds.
\end{lemma}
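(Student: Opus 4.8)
The plan is to compute $\det J$ directly, exploiting the staircase shape of the gradients that make up its rows, and thereby reduce the condition $\det J\neq 0$ to the non-vanishing of a single $2\times 2$ minor, which will turn out to be (up to a nonzero factor) the difference of the two roots of~\eqref{eqn:quadraticforomega}.

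First I would record the rows of $J$. For $i\neq k+1$ we have $M_{i-1}(T)=\{\alpha_{i-1}\}$ and $\Omega_{\alpha_{i-1}}=\omega_i\omega_{i+1}\cdots\omega_N$, so the $i$-th row of $J$ is the gradient of $q_{\alpha_{i-1}}\,\omega_i\omega_{i+1}\cdots\omega_N$, which vanishes in columns $1,\dots,i-1$. For $i=k+1$ we have $M_k(T)=\{\alpha_k,\alpha_k'\}$ with $\Omega_{\alpha_k}=\omega_{k+1}\cdots\omega_N$ and $\Omega_{\alpha_k'}=\omega_k\omega_{k+2}\cdots\omega_N$, so the $(k{+}1)$-st row is the gradient of $q_{\alpha_k}\,\omega_{k+1}\cdots\omega_N+q_{\alpha_k'}\,\omega_k\omega_{k+2}\cdots\omega_N$, which vanishes in columns $1,\dots,k-1$. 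Ordering both rows and columns by $1,2,\dots,N$, this exhibits $J$ as block upper-triangular with diagonal blocks on the index sets $\{1,\dots,k-1\}$, $\{k,k+1\}$ and $\{k+2,\dots,N\}$; the two outer blocks are themselves upper-triangular with diagonal entries of the form $q_{\alpha_{i-1}}\,\omega_{i+1}\cdots\omega_N$. These are nonzero because $\bolda\Subset\psK^\times$ forces every $c_i:=\leadcoeff(a_i)$ to be nonzero, hence every $\omega_i$ nonzero by~\eqref{eqn:tworootseasy}--\eqref{eqn:tworootsprod}.

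Therefore $\det J$ equals a nonzero product of the $\omega_i$ and $q_{\alpha_i}$ times the determinant of the middle block
\[
D \;=\; \begin{pmatrix}
q_{\alpha_{k-1}}\,\omega_{k+1}\cdots\omega_N & q_{\alpha_{k-1}}\,\omega_k\omega_{k+2}\cdots\omega_N \\[2pt]
q_{\alpha_k'}\,\omega_{k+2}\cdots\omega_N & q_{\alpha_k}\,\omega_{k+2}\cdots\omega_N
\end{pmatrix},
\]
whose determinant is $q_{\alpha_{k-1}}(\omega_{k+2}\cdots\omega_N)^2\bigl(q_{\alpha_k}\omega_{k+1}-q_{\alpha_k'}\omega_k\bigr)$. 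So $\det J\neq 0$ if and only if $q_{\alpha_k'}\omega_k\neq q_{\alpha_k}\omega_{k+1}$. To connect this with the discriminant, set $Y_1:=q_{\alpha_k'}\omega_k$ and $Y_2:=q_{\alpha_k}\omega_{k+1}$; by~\eqref{eqn:tworootssum} and~\eqref{eqn:tworootsprod} these satisfy $Y_1+Y_2=q_{\alpha_{k+1}}(c_k+c_{k+1})$ and $Y_1Y_2=\tfrac{q_{\alpha_k}q_{\alpha_k'}q_{\alpha_{k+1}}}{q_{\alpha_{k-1}}}\,c_kc_{k+1}$, so $Y_1,Y_2$ are the roots of a monic quadratic whose discriminant equals $q_{\alpha_{k+1}}^2$ times the discriminant of~\eqref{eqn:quadraticforomega} that appears in~\eqref{eqn:discriminant}. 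Since $q_{\alpha_{k+1}}>0$ by~\eqref{eqn:defq}, the two discriminants vanish together, and $Y_1=Y_2$ precisely when they do. Combining with the previous paragraph, $\det J\neq 0$ at a solution of~\eqref{eqn:tworootseasy}--\eqref{eqn:tworootsprod} if and only if the discriminant of~\eqref{eqn:quadraticforomega} is nonzero; and as this discriminant is a single number not depending on which of the (at most two) solutions is taken, the equivalence asserted in the lemma follows.

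The only genuine difficulty here is bookkeeping rather than ideas: one must correctly identify which $M_i(T)$ are singletons versus the doubleton $M_k(T)$, expand each $\Omega_\nu$ as the appropriate product of $\omega$'s, and check that the below-diagonal blocks of $J$ really do vanish — the last point being where one uses that $a_k$ and $a_{k+1}$ lie in different rows and columns, so that $\alpha_{k+1}/\alpha_{k-1}$ is an honest two-box skew shape with $\alpha_k$ and $\alpha_k'$ as its two intermediate partitions. Once the block structure is pinned down, the $2\times2$ evaluation and the comparison of discriminants are routine.
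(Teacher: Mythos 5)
Your proof is correct and takes essentially the same approach as the paper: both establish the block upper-triangular structure of $J$ (with $1\times1$ nonzero diagonal entries outside rows $k,k{+}1$), reduce $\det J\neq 0$ to the non-vanishing of the central $2\times2$ minor $q_{\alpha_k}\omega_{k+1}-q_{\alpha_k'}\omega_k$, and then tie this to the discriminant of~\eqref{eqn:quadraticforomega}. The only cosmetic difference is at the last step, where you substitute $Y_1=q_{\alpha_k'}\omega_k$, $Y_2=q_{\alpha_k}\omega_{k+1}$ and compare discriminants of monic quadratics, whereas the paper simply writes down the second solution $(q_{\alpha_k}\omega_{k+1}/q_{\alpha_k'},\,q_{\alpha_k'}\omega_k/q_{\alpha_k})$ explicitly and notes that the discriminant vanishes precisely when the two solutions coincide — two phrasings of the same observation.
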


\begin{proof}
The matrix Jacobian matrix $J$ of \eqref{eqn:jacobiancondition} is block
upper triangular, with all diagonal blocks non-zero of size $1 \times 1$, 
except for a $2 \times 2$ block in rows $k, k{+}1$.  
Thus~\eqref{eqn:jacobiancondition} holds iff the determinant of this
$2 \times 2$ block
$$
\begin{pmatrix}
\frac{\partial}{\partial\omega_k} 
q_{\alpha_{k-1}} \Omega_{\alpha_{k-1}} 
& \frac{\partial}{\partial\omega_{k+1}} 
q_{\alpha_{k-1}} \Omega_{\alpha_{k-1}} 
\\
\frac{\partial}{\partial\omega_k} 
(q_{\alpha_k} \Omega_{\alpha_k} + q_{\alpha'_k} \Omega_{\alpha'_k})
& \frac{\partial}{\partial\omega_{k+1}} 
(q_{\alpha_k} \Omega_{\alpha_k} + q_{\alpha'_k} \Omega_{\alpha'_k})
 \\
\end{pmatrix}
=
\Omega_{\alpha_{k+1}}
\begin{pmatrix}
q_{\alpha_{k-1}} \omega_{k+1}
& 
q_{\alpha_{k-1}} \omega_{k}
\\
q_{\alpha_k'} 
& q_{\alpha_k} 
\end{pmatrix}
$$
is non-zero, i.e. 
iff $q_{\alpha_k}\omega_{k+1} \neq q_{\alpha_k'}\omega_{k}$.

On the other hand, if $(\omega_k, \omega_{k+1})$ is one solution 
to~\eqref{eqn:tworootssum} and~\eqref{eqn:tworootsprod},
then the other solution is
$(q_{\alpha_k}\omega_{k+1}/q_{\alpha_k'},
q_{\alpha_k'}\omega_{k}/q_{\alpha_k})$.
The discriminant of~\eqref{eqn:quadraticforomega} is non-zero iff
these two solutions are distinct, i.e. iff
$q_{\alpha_k}\omega_{k+1} \neq q_{\alpha_k'}\omega_{k}$.
\end{proof}

\begin{corollary}
\label{cor:tworootsreal}
If $c_k, c_{k+1} \in \RR$, then the system of 
equations~\eqref{eqn:tworootssum} and~\eqref{eqn:tworootsprod} 
has two distinct real solutions, hence there are two points in 
$\psX(\bolda)$ corresponding to $T$, i.e.
satisfying~\eqref{eqn:leadtermplucker}.
\end{corollary}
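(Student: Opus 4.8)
The plan is to reduce the statement to the single assertion that the discriminant \eqref{eqn:discriminant} of the quadratic \eqref{eqn:quadraticforomega} is \emph{strictly} positive, and then to feed this into Lemmas~\ref{lem:distancecalculation} and~\ref{lem:jacobianholds} together with Theorem~\ref{thm:leadtermequations}. We are in the case where $a_k$ and $a_{k+1}$ occupy boxes of $T$ in different rows and columns, since the equations \eqref{eqn:tworootssum} and \eqref{eqn:tworootsprod} refer to exactly that situation. First I would record two elementary facts: $c_k = \leadcoeff(a_k)$ and $c_{k+1} = \leadcoeff(a_{k+1})$ are nonzero (because $a_k, a_{k+1} \in \psK^\times$), and the quantity $L$ of Lemma~\ref{lem:distancecalculation} — the total horizontal plus vertical distance between the two boxes of $\alpha_{k+1}/\alpha_{k-1}$, i.e. the boxes of $a_k$ and $a_{k+1}$ — satisfies $L \geq 2$, since those boxes lie in distinct rows and distinct columns.

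Next I would rewrite the discriminant. By Lemma~\ref{lem:distancecalculation}, $q_{\alpha_k}q_{\alpha_k'}/(q_{\alpha_{k-1}}q_{\alpha_{k+1}}) = 1 - L^{-2}$, so \eqref{eqn:discriminant} becomes
\[
D \ :=\ (c_k + c_{k+1})^2 - 4(1 - L^{-2})\,c_k c_{k+1} \ =\ (c_k - c_{k+1})^2 + 4L^{-2}\,c_k c_{k+1}\,,
\]
and since $L \geq 2$ we have $0 < 4L^{-2} \leq 1$. The core of the argument is the elementary inequality $D > 0$ for all real $c_k, c_{k+1}$ not both zero, which I would verify by splitting on the sign of $c_k c_{k+1}$: if $c_k c_{k+1} \geq 0$ then $D \geq (c_k - c_{k+1})^2$, which is positive unless $c_k = c_{k+1}$, in which case $D = 4L^{-2}c_k^2 > 0$; if $c_k c_{k+1} < 0$ then $4L^{-2}c_k c_{k+1} \geq c_k c_{k+1}$, whence $D \geq c_k^2 - c_k c_{k+1} + c_{k+1}^2 > 0$ because that form is positive definite and $c_k, c_{k+1}$ are both nonzero here.

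With $D > 0$, the quadratic \eqref{eqn:quadraticforomega} has two distinct real roots $\omega_k$; for each of them \eqref{eqn:tworootssum} determines a real value of $\omega_{k+1}$, and \eqref{eqn:tworootsprod} shows $\omega_k\omega_{k+1} \neq 0$ (using again $c_k, c_{k+1} \neq 0$), so both lie in $\RR^\times$, as required. Together with the values supplied by \eqref{eqn:tworootseasy} this gives two distinct real solutions of the full system \eqref{eqn:leadtermequations}. Since $D \neq 0$, Lemma~\ref{lem:jacobianholds} guarantees that the Jacobian condition \eqref{eqn:jacobiancondition} holds at each solution, so Theorem~\ref{thm:leadtermequations} — applied with $\lambda/\mu = \Rect$, so that $\bolda^+ = \bolda$ — produces, for each solution, a point $x \in \psX(\bolda)$ whose Pl\"ucker coordinates satisfy \eqref{eqn:leadtermplucker}. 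Distinct solutions have distinct values of $\Omega_{\alpha_k'}$, hence distinct leading terms of $p_{\alpha_k'}$, so they give two distinct points. The one nonroutine step is the positivity estimate for $D$ in the mixed-sign case; that is precisely where the hypothesis that $a_k, a_{k+1}$ sit in different rows and columns (equivalently $L \geq 2$) is used, and the rest is bookkeeping with the preceding lemmas.
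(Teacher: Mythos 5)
Your proof is correct and follows essentially the same route as the paper: reduce to positivity of the discriminant \eqref{eqn:discriminant}, invoke Lemma~\ref{lem:distancecalculation} to replace the $q$-ratio by $1-L^{-2}$, split on the sign of $c_k c_{k+1}$, and finish with Lemma~\ref{lem:jacobianholds} and Theorem~\ref{thm:leadtermequations}. Your rewriting of the discriminant as $(c_k-c_{k+1})^2 + 4L^{-2}c_kc_{k+1}$ and your explicit treatment of the $c_k=c_{k+1}$ case are only cosmetically different from the paper's chain $(c_k+c_{k+1})^2-4(1-L^{-2})c_kc_{k+1} > (c_k-c_{k+1})^2 \geq 0$, and your closing remarks about distinctness of the two resulting points and the explicit use of $c_k,c_{k+1}\neq 0$, $L\geq 2$ merely spell out details the paper leaves implicit.
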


\begin{proof}
It is enough to check that the discriminant of~\eqref{eqn:quadraticforomega}
is positive.  Since $q_\lambda > 0$ for all $\lambda \in \Lambda$, this
is certainly true if $c_k c_{k+1} < 0$.  Otherwise, we
have
\begin{align*}
(c_k + c_{k+1})^2
- 4
\frac{q_{\alpha_k}q_{\alpha_k'}}{q_{\alpha_{k-1}}q_{\alpha_{k+1}}} 
c_k c_{k+1}
&= 
(c_k + c_{k+1})^2
- 4 (1-L^{-2}) c_k c_{k+1} \\
& >
(c_k + c_{k+1})^2 - 4 c_k c_{k+1} \\
& =
(c_k - c_{k+1})^2 \\
& \geq 0\,.
\end{align*}
By Lemma~\ref{lem:jacobianholds},
we can apply Theorem~\ref{thm:leadtermequations} to conclude that
we have two corresponding points in the fibre $\psX(\bolda)$.
\end{proof}

The reason $T$ is identified with two points in $\psX(\bolda)$
rather than one
is that there is a tie in the order of magnitude of the roots.
As in the same-row/column case, the 
tableau $T' \in \DIT(\Rect; \bolda)$, obtained
by swapping the positions of $a_k$ and $a_{k+1}$ in $T$,
produces the same system of equations, and
hence is also identified with these same two points.
Thus, we have a two-to-two correspondence between tableaux in
$\DIT(\Rect;\bolda)$ and points in $\psX(\bolda)$.
Note that between this two-to-two correspondence, and the two-to-one
correspondence earlier, we have found all $|\ordSYT(\Rect)|$
points in $\psX(\bolda)$.

Now suppose that $\log |c_k| \ll \log |c_{k+1}|$.  This supposition
effectively breaks the tie in the order of magnitude of the roots, 
which gives a natural way to
identify $T$ with one of these two points in $\psX(\bolda)$, and
$T'$ with the other.
To see this, we put
$c_k = \bar u^{v_1} b_1$ and $c_{k+1} = \bar u^{v_2} b_2$,
with $v_1 > v_2$,
and solve
\eqref{eqn:tworootssum} and~\eqref{eqn:tworootsprod} over 
$\puiseux{\bar u}$.

\begin{proposition}
If 
$c_k = \bar u^{v_1} b_1$ and $c_{k+1} = \bar u^{v_2} b_2$,
and $v_1 > v_2$,
then the one solution for $(\omega_k, \omega_{k+1})$ satisfies
\begin{equation}
\label{eqn:firstasymptoticsolution}
\leadterm(\omega_k) 
 = \frac{q_{\alpha_{k}}c_{k}}{q_{\alpha_{k-1}}} \qquad\qquad
\leadterm(\omega_{k+1}) 
 = \frac{q_{\alpha_{k+1}}c_{k+1}}{q_{\alpha_{k}}}\,,
\end{equation}
and the other satisfies
\begin{equation}
\label{eqn:secondasymptoticsolution}
\leadterm(\omega_{k+1}) 
 = \frac{q_{\alpha_k'}c_{k}}{q_{\alpha_{k-1}}} \qquad\qquad
\leadterm(\omega_k) 
 = \frac{q_{\alpha_{k+1}}c_{k+1}}{q_{\alpha_k'}}\,.
\end{equation}
\end{proposition}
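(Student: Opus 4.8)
The plan is to read off the two asymptotic solutions directly from the quadratic \eqref{eqn:quadraticforomega}, equivalently from the Vieta-type relations \eqref{eqn:tworootssum} and \eqref{eqn:tworootsprod}, by a Newton-polygon argument in the variable $\bar u$. First I would record the valuations of the data: since $v_1 > v_2$ we have $\val(c_k + c_{k+1}) = v_2$ with $\leadterm(c_k + c_{k+1}) = c_{k+1}$, while $\val(c_k c_{k+1}) = v_1 + v_2$; the factors $q_{\alpha_{k-1}}, q_{\alpha_k}, q_{\alpha_k'}, q_{\alpha_{k+1}}$ are nonzero constants. Feeding these into \eqref{eqn:tworootssum}--\eqref{eqn:tworootsprod}, the two admissible values of $\omega_k$ have sum of valuation $v_2$ and product of valuation $v_1 + v_2$; this forces one of them to have valuation $v_1$ and the other valuation $v_2$ (equivalently, the Newton polygon of \eqref{eqn:quadraticforomega} has two edges, of slopes $-v_1$ and $-v_2$). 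Call these the first and second solutions, respectively. Since $\omega_{k+1}$ is then recovered from \eqref{eqn:tworootsprod} as $q_{\alpha_{k+1}} q_{\alpha_{k-1}}^{-1} c_k c_{k+1}/\omega_k$, it has valuation $v_2$ in the first case and $v_1$ in the second.

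Next I would extract the leading coefficients by comparing leading terms in \eqref{eqn:tworootssum}. For the first solution the summand $q_{\alpha_k'}\omega_k$ has strictly larger valuation ($v_1$) than $q_{\alpha_k}\omega_{k+1}$ (valuation $v_2$), so the leading term of the left side is that of $q_{\alpha_k}\omega_{k+1}$; matching with the right side gives $q_{\alpha_k}\leadterm(\omega_{k+1}) = q_{\alpha_{k+1}} c_{k+1}$, which is the second identity of \eqref{eqn:firstasymptoticsolution}. Substituting into the product relation \eqref{eqn:tworootsprod} then yields $\leadterm(\omega_k) = q_{\alpha_{k-1}}^{-1} q_{\alpha_k} c_k$, the first identity of \eqref{eqn:firstasymptoticsolution}. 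For the second solution the roles of $\omega_k$ and $\omega_{k+1}$, and of $q_{\alpha_k}$ and $q_{\alpha_k'}$, are interchanged: here $q_{\alpha_k}\omega_{k+1}$ has the larger valuation, so the leading term of \eqref{eqn:tworootssum} is carried by $q_{\alpha_k'}\omega_k$, giving $\leadterm(\omega_k) = q_{\alpha_k'}^{-1} q_{\alpha_{k+1}} c_{k+1}$, and then \eqref{eqn:tworootsprod} gives $\leadterm(\omega_{k+1}) = q_{\alpha_{k-1}}^{-1} q_{\alpha_k'} c_k$. This is \eqref{eqn:secondasymptoticsolution}.

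The only step requiring genuine care is the valuation separation in the first paragraph, namely that the two roots of \eqref{eqn:quadraticforomega} have $\omega_k$-valuations exactly $v_1$ and $v_2$ (and are in particular distinct, consistent with Lemma~\ref{lem:jacobianholds}, since by \eqref{eqn:discriminant} the discriminant has valuation $2v_2 < v_1 + v_2$ and leading term $c_{k+1}^2 \neq 0$). Everything afterward is a mechanical comparison of leading terms in the bilinear relations \eqref{eqn:tworootssum} and \eqref{eqn:tworootsprod}, using only $v_1 \neq v_2$ and that the $q_\bullet$ are nonzero constants.
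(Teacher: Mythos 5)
Your argument is correct, and it takes a genuinely different route from the paper's. The paper's proof first makes the change of variables $\omega_k = \bar u^{v_1} y_1$, $\omega_{k+1} = \bar u^{v_2} y_2$, which normalizes all valuations to zero, and then applies Hensel's lemma to the rewritten system to lift the unique nonsingular solution of the reduced system to an exact solution in $\psK_+$; the two solutions of the original system correspond to the two ways of assigning the exponents $v_1, v_2$ to $\omega_k, \omega_{k+1}$. You instead read the valuations of the two roots of \eqref{eqn:quadraticforomega} directly from its Newton polygon --- and indeed the lower hull of the points $(0, v_1+v_2)$, $(1, v_2)$, $(2, 0)$ has two edges of slopes $-v_1$ and $-v_2$ precisely because $v_1 > v_2$ makes $(1,v_2)$ a strict vertex --- and then extract the leading coefficients by comparing leading terms in the Vieta relations \eqref{eqn:tworootssum} and \eqref{eqn:tworootsprod}, which is carried out correctly in both cases. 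The trade-off is roughly this: the paper's substitution-plus-Hensel method keeps the Jacobian/nonsingularity condition explicitly in view and thus matches the framework of Theorem~\ref{thm:leadtermequations} more closely, whereas your Newton-polygon argument is more self-contained and makes the dichotomy between the two asymptotic solutions visible at a glance. Two small remarks: your parenthetical about the discriminant is a pleasant cross-check but logically superfluous, since roots with distinct valuations are automatically distinct; and the existence of the two roots in $\puiseux{\bar u}$, which you implicitly use, comes from the algebraic closure of the Puiseux series field (Newton--Puiseux), whereas the paper gets existence from Hensel's lemma.
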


\begin{proof}
By Hensel's lemma, there exists a solution for $y_1, y_2 \in \psK_+$
to the system of equations
\begin{gather*}
\frac{\bar u^{v_1-v_2}q_{\alpha_k'}\,y_1 + q_{\alpha_k}y_2}{q_{\alpha_{k+1}}}
=\bar u^{v_1-v_2} b_1 +  b_2 \\
\frac{q_{\alpha_{k-1}}}{q_{\alpha_{k+1}}}\, y_1 y_2
= b_1 b_2 
\end{gather*}
with 
$$
\leadterm(y_1)
 = \frac{q_{\alpha_{k}}b_{1}}{q_{\alpha_{k-1}}} \qquad\qquad
\leadterm(y_2) 
 = \frac{q_{\alpha_{k+1}}b_{2}}{q_{\alpha_{k}}}\,.
$$
Putting $\omega_k = \bar u^{v_1} y_1$, $\omega_{k+1} = \bar u^{v_2} y_2$
gives the first solution.  The other solution is obtained similarly.
\end{proof}

Replacing $\omega_k, \omega_{k+1}$ by 
$\leadterm(\omega_k), \leadterm(\omega_{k+1})$, these are
precisely the solutions to two different systems of
equations~\eqref{eqn:leadtermequations} that we obtain if we
perturb the norm of the entries so that
$\|a_k\| \neq \|a_{k+1}\|$.
The first solution~\eqref{eqn:firstasymptoticsolution} is the one
that is consistent with breaking the tie so that
$\|a_1\| < \dots < \|a_k\| < \|a_{k+1}\| < \dots < \|a_N\|$.  To
see this, note that if $\|a_k\| < \|a_{k+1}\|$, then 
$M_k(T) = \{\alpha_k\}$; 
thus, as in the proof of Corollary~\ref{cor:psfibres},
the solution 
to~\eqref{eqn:leadtermequations} is given 
by~\eqref{eqn:simpleomegasolution}, which is consistent 
with~\eqref{eqn:firstasymptoticsolution}.
Since $\log |c_k| \ll \log |c_{k+1}|$, this is the solution 
we identify with $T$.
The second solution~\eqref{eqn:secondasymptoticsolution} corresponds 
to 
$\|a_1\| < \dots < \|a_{k-1}\| < \|a_{k+1}\| < \|a_k\| < \dots < \|a_N\|$,
since here $M_k(T) = \{\alpha_{k+1}\}$.
This solution is identified with the other tableau, $T'$.

In summary, suppose that either $\log |c_k| \ll \log |c_{k+1}|$ or
$\log |c_k| \gg \log |c_{k+1}|$.  Then
a solution $(\omega_k, \omega_{k+1})$ 
to~\eqref{eqn:tworootssum} and~\eqref{eqn:tworootsprod}
is identified with the tableau $T$ for which
$\omega_k \approx c_k$ and $\omega_{k+1} \approx c_{k+1}$
(up to a ratio of $q_\alpha$s).  If $(\hat \omega_k, \hat \omega_{k+1})$
denotes the other solution to~\eqref{eqn:tworootssum} 
and~\eqref{eqn:tworootsprod} then 
$\hat \omega_k \approx c_{k+1}$ and $\hat \omega_{k+1} \approx c_k$,
and this solution corresponds to $T'$.

\subsection{Proof of Theorem~\ref{thm:geomslide}}

Suppose that $\bolda_t = \{(a_1)_t, \dots, (a_N)_t\}$, 
$t \in [0,1]$ is a path in the space of multisubsets of $\PP^1(\psK)$,
with $\bolda_0 = \bolda$ as in Section~\ref{sec:tworoots}, 
and for all $t\in [0,1]$,
$(a_i)_t = a_i$ if $i \neq k, k+1$, and
$\|a_k\| = \|(a_k)_t\| = \|(a_{k+1})_t\| = \|a_{k+1}\|$.
Let $(c_i)_t := \leadcoeff((a_i)_t)$, 
and suppose $(c_k)_t$ and $(c_{k+1})_t$, $t \in [0,1]$
are paths in $\RR^\times$.  
Let 
$x_t \in \psX(\bolda_t)$ be a path in $\psX$. 
Finally, suppose 
$\log|(c_k)_0| \ll \log|(c_{k+1})_0|$ and
$\log|(c_k)_1| \gg \log|(c_{k+1})_1|$.
From the discussion in Section~\ref{sec:tworoots}, these hypotheses
imply that there are unique tableaux $T_{x_0}$ and $T_{x_1}$ 
corresponding to points $x_0$ and $x_1$.  Since this correspondence
is defined asymptotically, for other values 
of $t \in (0,1)$
we do not associate a unique corresponding tableau $T_{x_t}$.

\begin{theorem}
\label{thm:simpleslide}
With $\bolda_t$ and $x_t$, as above, $T_{x_0}$ and $T_{x_1}$ are related
as follows. 
\begin{enumerate}
\item[(i)]  If $(a_k)_0$ and $(a_{k+1})_0$ are in the same row or
column of $T_{x_0}$, or if $(c_k)_0\,(c_{k+1})_0 > 0$, then 
$T_{x_1}$ is obtained from $T_{x_0}$ by replacing
$(a_k)_0$ with $(a_{k+1})_1$ and $(a_{k+1})_0$ with $(a_k)_1$.
\item[(ii)]  If $(a_k)_0$ and $(a_{k+1})_0$ are in different rows and
columns of $T_{x_0}$ and $(c_k)_0\,(c_{k+1})_0 < 0$, then 
$T_{x_1}$ is obtained from $T_{x_0}$ by replacing
$(a_k)_0$ with $(a_k)_1$ and $(a_{k+1})_0$ with $(a_{k+1})_1$.
\end{enumerate}
\end{theorem}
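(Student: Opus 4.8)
The plan is to read the statement off from the explicit description of $\psX(\bolda_t)$ developed in Section~\ref{sec:tworoots}. Since $(a_i)_t = a_i$ for $i\neq k,k+1$ and $\|(a_k)_t\|=\|(a_{k+1})_t\|=\|a_k\|$ for all $t$, the system~\eqref{eqn:leadtermequations} attached to $x_t$ splits exactly as in that section: the coordinates $\omega_i$ with $i\neq k,k+1$ are pinned down by~\eqref{eqn:tworootseasy} and vary continuously with $t$, while $(\omega_k,\omega_{k+1})$ is subject to~\eqref{eqn:tworootssum}--\eqref{eqn:tworootsprod}. When $(a_k)_0$ and $(a_{k+1})_0$ lie in different rows and columns of $T_{x_0}$ — this covers case~(ii), and the part of case~(i) with $(c_k)_0(c_{k+1})_0>0$ — this amounts to the quadratic~\eqref{eqn:quadraticforomega} in $\omega_k$; when they share a row or column (the rest of case~(i)) it is a single linear equation. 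In either situation $x_t$ corresponds, for each $t$, to one solution of this equation, and since $t\mapsto x_t$ is continuous, so is the selected solution $(\omega_k(t),\omega_{k+1}(t))$.

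The crucial input is that the discriminant~\eqref{eqn:discriminant} of~\eqref{eqn:quadraticforomega} is strictly positive for every $t$: by Lemma~\ref{lem:distancecalculation} it equals $((c_k)_t-(c_{k+1})_t)^2+4L^{-2}(c_k)_t(c_{k+1})_t$, which is positive because $L\geq 2$ (the two boxes lie in distinct rows and columns) and $(c_k)_t,(c_{k+1})_t\in\RR^\times$ — this is the computation in the proof of Corollary~\ref{cor:tworootsreal}. Hence~\eqref{eqn:quadraticforomega} has two real distinct roots $\omega_k^+(t)>\omega_k^-(t)$ depending continuously on $t$, and neither root is ever $0$ since the constant term never vanishes. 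Therefore the continuous solution selected by $x_t$ stays on a single one of the two branches for the whole path. The sign structure is controlled by $(c_k)_0(c_{k+1})_0$, whose sign is constant along the path: if it is positive, both roots have the constant sign $\sgn((c_k)_0)$ and the two branches are then distinguished by the magnitude $|\omega_k|$; if it is negative, one branch is positive throughout and the other negative throughout.

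It remains to match branches to tableaux at the two endpoints. By Section~\ref{sec:tworoots}, $T_{x_0}$ is one of the two tableaux there denoted $T$ and $T'$, with $T'$ obtained from $T$ by exchanging the boxes of $a_k$ and $a_{k+1}$; relabelling if necessary, assume $T_{x_0}=T$, and let $\alpha_{k-1}\subset\alpha_k,\alpha_k'\subset\alpha_{k+1}$ be the associated shapes, with $\alpha_k/\alpha_{k-1}$ the box of $a_k$ in $T$. At $t=0$, where $\log|(c_k)_0|\ll\log|(c_{k+1})_0|$, the discussion around~\eqref{eqn:firstasymptoticsolution}--\eqref{eqn:secondasymptoticsolution} attaches $T$ to the root with $\leadterm(\omega_k)=q_{\alpha_k}(c_k)_0/q_{\alpha_{k-1}}$ and $T'$ to the root with $\leadterm(\omega_k)=q_{\alpha_{k+1}}(c_{k+1})_0/q_{\alpha_k'}$; at $t=1$, where $\log|(c_k)_1|\gg\log|(c_{k+1})_1|$, the same analysis run with the roles of $a_k$ and $a_{k+1}$ reversed attaches $T$ to the root with $\leadterm(\omega_k)=q_{\alpha_{k+1}}(c_k)_1/q_{\alpha_k'}$. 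Thus the root carrying $T$ has sign $\sgn(c_k)$ at both endpoints, while its magnitude, of order $|c_k|$, is the smaller of the two at $t=0$ and the larger at $t=1$. In case~(ii) ($(c_k)(c_{k+1})<0$) this root is the $\sgn(c_k)$-branch at both endpoints, hence — that branch never changing sign — throughout, so the path stays on it and $T_{x_1}=T$, i.e. the entries stay in their boxes. In the different-rows-and-columns part of case~(i) ($(c_k)(c_{k+1})>0$) the root carrying $T$ is the smaller-magnitude branch at $t=0$ but the larger-magnitude branch at $t=1$; since the two magnitude-branches never meet, the path stays on the smaller-magnitude branch, which at $t=1$ carries $T'$, so $T_{x_1}=T'$ — exactly the exchange of entries asserted in~(i).

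Finally, when $(a_k)_0$ and $(a_{k+1})_0$ share a row or column one has $|M_i(T_{x_0})|=1$ for all $i$; $T$ and $T'$ then produce the same sets $M_i$, hence the same linear equation, so the unique point of $\psX(\bolda_t)$ with this leading-term shape is identified with both, and the convention of Section~\ref{sec:tworoots} labels it by the tableau whose smaller-magnitude entry occupies the earlier of the two boxes — that is $(a_k)_0$ at $t=0$ and $(a_{k+1})_1$ at $t=1$ — so once more $T_{x_1}$ is $T_{x_0}$ with the two entries exchanged, as in~(i). I expect the main work, and the only genuine subtlety, to be the endpoint bookkeeping in the previous paragraph: correctly matching the two tie-breaking regimes to the two roots of~\eqref{eqn:quadraticforomega}, for which Lemma~\ref{lem:distancecalculation} and the asymptotic solutions~\eqref{eqn:firstasymptoticsolution}--\eqref{eqn:secondasymptoticsolution} reduce everything to the sign-and-magnitude comparison above. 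A minor point to dispatch along the way: if $(c_k)_t+(c_{k+1})_t$ vanishes for some intermediate $t$ — possible when $(c_k)(c_{k+1})<0$, or in the opposite-sign same-row/column situation — the leading-term description of $x_t$ degenerates there, but the path $x_t$ itself passes continuously through the fibre $\psX(\bolda_t)$, which is reduced for every $t$, so the two endpoints and their labels are unaffected.
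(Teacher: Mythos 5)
Your proposal is correct and follows essentially the same route as the paper: track the two real roots of the quadratic~\eqref{eqn:quadraticforomega} along the path, use the positivity of the discriminant (Lemma~\ref{lem:distancecalculation}, Corollary~\ref{cor:tworootsreal}) to keep the branches separated, and match branches to tableaux at the two endpoints via the asymptotic solutions~\eqref{eqn:firstasymptoticsolution}--\eqref{eqn:secondasymptoticsolution}, reading off the answer from sign versus magnitude. One small inaccuracy in your closing caveat: when $(a_k)_0$ and $(a_{k+1})_0$ lie in \emph{different} rows and columns, the leading-term description does not actually degenerate at $(c_k)_t+(c_{k+1})_t=0$ (the discriminant stays strictly positive and both roots stay nonzero there), so the reducedness/perturbation dodge is needed only in the same-row-or-column case, which is precisely where the paper applies it.
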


\begin{proof}
Let $T_0 = T_{x_0}$, and $T_t$ be the tableau obtained from $T_0$
by replacing $(a_i)_0$ with $(a_i)_t$ for all $i$.
Let $T'_t$ be the tableau obtained by swapping the positions
of $(a_k)_t$ and $(a_{k+1})_t$ in $T_t$.
The point $x_t$ satisfies the conditions of 
Theorem~\ref{thm:leadtermequations} for both tableaux
$T_t$ and $T'_t$.
Thus we must have either $T_{x_1} = T_1$ or $T_{x_1} = T'_1$.

If $(a_k)_0$, $(a_{k+1})_0$ are in the same row or column of $T_{x_0}$,
then $T_{x_1} = T'_1$ simply by definition.  There is one small problem,
however, which is that we have only established the existence of a
point $x_t$ provided $(c_k)_t + (c_{k+1})_t \neq 0$.  
To get around this, note that Theorem~\ref{thm:ssconj}
guarantees that the fibre 
$\psX(\bolda_t)$ is reduced even if $(c_k)_t + (c_{k+1})_t = 0$. 
Thus $T_{x_1}$ is unaffected
by small perturbations of the path $\bolda_t$.  
We can therefore 
perturb the path $\bolda_t$ so that $(c_k)_t$ and $(c_{k+1})_t$ become 
complex paths 
such that $(c_k)_t + (c_{k+1})_t \neq 0$ for all $t$, and hence see 
that $T_{x_1} = T'_1$.
This establishes the first case of (i).

For the remaining cases, suppose that 
$(a_k)_0$, $(a_{k+1})_0$ are in different rows and columns of $T_{x_0}$.
Let $((\omega_k)_t, (\omega_{k+1})_t)$ be the 
solution to~\eqref{eqn:tworootssum} and~\eqref{eqn:tworootsprod}
which gives rise to the point $x_t \in \psX(\bolda_t)$ 
via~\eqref{eqn:leadtermplucker},
and let
$((\hat \omega_k)_t, (\hat \omega_{k+1})_t)$ be the second solution
to these equations.  
In each case, we will need to determine whether 
$x_1$ corresponds to $T_1$ or $T'_1$.  From the discussion at the end of
Section~\ref{sec:tworoots}, if $x_1$ corresponds to $T_1$, then
$(\omega_k)_1 \approx (c_k)_1$ and $(\omega_{k+1})_1 \approx (c_{k+1})_1$.
If $x_1$ corresponds to $T'_1$, then 
$(\omega_{k+1})_1 \approx (c_k)_1$ and
$(\omega_k)_1 \approx (c_{k+1})_1$.

Suppose that $(c_k)_0 > 0$ and $(c_{k+1})_0 > 0$.
Since $x_0$ 
we have 
$\log(\omega_k)_0 \approx \log(c_k)_0$ and
$\log(\hat \omega_k)_0 \approx \log(c_{k+1})_0$.
Since $\log (c_k)_0 \ll \log (c_{k+1})_0$,
It follows that
$\log(\omega_k)_0 \ll \log(\hat \omega_k)_0$.  
By Corollary~\ref{cor:tworootsreal},  
$(\omega_k)_t \neq (\hat \omega_k)_t$ for all $t \in [0,1]$;
thus 
$(\omega_k)_t > (\hat \omega_k)_t > 0$ for all $t \in [0,1]$.
Since 
$\log (c_{k+1})_1 \gg \log (c_k)_1$, it must be the case that
$(\omega_k)_1 \approx (c_{k+1})_1$ and 
$(\hat \omega_k)_1 \approx (c_k)_1$, rather than the other 
way around.  
Thus we see that
$T_{x_1} = T'_1$.
Similarly, we have $T_{x_1} = T'_1$
if $(c_k)_0 < 0$ and $(c_{k+1})_0 < 0$.

Now suppose  $(c_k)_0 > 0$ and $(c_{k+1})_0 < 0$. 
Then we must also have
$(\omega_k)_0  \approx (c_k)_0 > 0$.
Since $(c_k)_t \, (c_{k+1})_t \neq 0$ for all
$t$, by~\eqref{eqn:tworootsprod}, the signs of 
$(\omega_k)_t$
and $(c_{k+1})_t$ 
are independent of $t$.
In particular, $(\omega_k)_1$ is positive, while $(c_{k+1})_1$
is negative.  Since these have opposite signs, it cannot be
the case that $(\omega_k)_1 \approx (c_{k+1})_1$, hence ${x_1}$ is not
identified with $T_1'$.
We must therefore have $T_{x_1} = T_1$.
Similarly, $T_{x_1} = T_1$
if $(c_k)_0 < 0$ and $(c_{k+1})_0 > 0$.
\end{proof}

Theorem~\ref{thm:geomslide} now follows.

\begin{proof}[Proof of Theorem~\ref{thm:geomslide}]
Let
$\bolda_t = \{(a_1)_t, \dots, (a_N)_t\} \subset \RP^1$, 
$t \in [0,1]$ is a path in the space of multisubsets of $\RP^1$.

First, consider the case where
$$ \log |(a_1)_t| \ll \dots \ll \log|(a_k)_t|\,,\, \log |(a_{k+1})_t|
\ll \dots \ll \log |(a_N)_t|\,,$$
$\log |(a_k)_0| \ll \log|(a_{k+1})_0|$
and $\log |(a_{k+1})_1| \ll \log|(a_k)_1|$.
Let $x_t \in X(\bolda_t)$.  Then by
Theorem~\ref{thm:simpleslide} and
the discussion in Section~\ref{sec:complexandreal} we see that
$T_{x_1} = \slide_{a_1}(T_{x_0})$.

Second, suppose that
$$ |(a_1)_t| <  \dots < |(a_k)_t|\,,\, |(a_{k+1})_t| < \dots < |(a_N)_t|\,,$$
$|(a_k)_0| < |(a_{k+1})_0|$
and $|(a_{k+1})_1| < |(a_k)_1|$.
There is an order and sign preserving homotopy between this case and
the previous.  Since the correspondence of Corollary~\ref{cor:realfibres}
between points in $X(\bolda)$
and $\SYT(\Rect;\bolda)$ is established via such homotopies, 
the theorem holds in 
this case also.

Finally, a general path $\bolda_t \Subset \RP^1$ can be regarded as
a concatenation of paths from the second case; thus the theorem is true 
for any real path.
\end{proof}

\subsection{Monodromy around special loops}

Let $\lambda/\mu$ be a skew partition fitting inside $\Rect$.
Throughout the rest of this section, we will assume that $k,L$ are positive
integers with $1 \leq k < |\lambda/\mu|$, and $L \geq 2$.

For any such $k, L$,
define a permutation $s_{k,L}$ of the set $\ordSYT(\lambda/\mu)$
as follows.  
For $\ordT \in \ordSYT(\lambda/\mu)$, $s_{k,L}(\ordT)$ is the tableau
obtained by swapping entries $k$ and $k{+}1$ in $\ordT$, if the total of
the horizontal and vertical distance between $k$ and $k{+}1$ equals
$L$; otherwise $s_{k,L}(\ordT) = \ordT$.

If $\bolda = \{a_1, \dots, a_{|\lambda/\mu|}\} \subset \FP^1$, 
then we define
$s_{k,L}(T)$ for $T \in \SYT(\lambda/\mu; \bolda)$, to
satisfy $\ord(s_{k,L}(T)) = s_{k,L}(\ord(T))$.
If $\bolda \subset \RR$, we
also define $s_{k,L}(x)$ for $x \in X(\bolda^+)$, by
$T_{s_{k,L}(x)} = s_{k,L}(T_x)$.

\begin{theorem}
\label{thm:monodromyloops}
Fix $k$ and $L$ as above, and
let $\bolda = \{a_1, \dots, a_{|\lambda/\mu|}\} \subset \RP^1$.
There exists a loop $\bolda_t \subset \CC$, $t \in [0,1]$, based at
$\bolda$
such that the monodromy of the Wronski map around $\bolda_t$
is given by $s_{k,L}$.  That is,
$\bolda_0 = \bolda_1 = \bolda$, every fibre
$X(\bolda_t)$ is reduced, and if $x_t \in X(\bolda_t)$ then
$x_1 = s_{k,L}(x_0)$.
\end{theorem}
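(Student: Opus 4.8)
The plan is to construct the loop $\bolda_t$ by hand, taking the local analysis of Section~\ref{sec:tworoots} as the model. First I would invoke Corollary~\ref{cor:nomonodromy}, together with the fact that a real path followed by its reverse contributes trivial monodromy: this lets me homotope $\bolda$, through real configurations obeying \I--\III, to one with widely separated scales $|a_1| \ll \dots \ll |a_{|\lambda/\mu|}|$, perform the interesting loop there, and then conjugate back. Via the reduction of the skew case to the rectangular one (padding with entries $0$ and $\infty$, as in the proofs of Theorems~\ref{thm:leadtermequations} and~\ref{thm:zeroinfinitylimit}), I may then work in exactly the setting of Section~\ref{sec:tworoots}, with $a_k$ and $a_{k+1}$ the two roots of comparable size. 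The loop itself keeps $a_i$ fixed for $i \ne k, k{+}1$, holds $a_{k+1}$ at a fixed real value $R$ between the magnitudes of $a_{k-1}$ and $a_{k+2}$, and sets $a_k(t) = R\, r(t)$ for a loop $r(t)$ in $\CC$ based at the original real ratio $r_0 = a_k/R \in (0,1)$, with $|r(t)|$ kept between two fixed positive constants so that $\bolda_t$ remains an $|\lambda/\mu|$-element subset of $\CC$ at the correct scales.

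The decisive input is Lemma~\ref{lem:distancecalculation}. For a tableau in which the boxes containing $a_k$ and $a_{k+1}$ lie in different rows and columns, it shows that the discriminant~\eqref{eqn:discriminant} of the quadratic~\eqref{eqn:quadraticforomega} --- which governs the two fibre points jointly labelled by such a tableau $T$ and its swap $T'$ --- vanishes exactly when $r = c_k/c_{k+1}$ equals one of the two complex-conjugate numbers $e^{\pm i\theta_L}$ of modulus $1$ with real part $1 - 2L^{-2}$, $L$ being the total horizontal-plus-vertical distance between the two boxes. These points lie on the unit circle, depend only on $L$, are distinct for distinct $L$ (accumulating at $1$ as $L \to \infty$), and only finitely many values of $L$ occur. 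I would then take $r(t)$ to be a loop based at $r_0$ that avoids $1$, $-1$, and all the points $e^{\pm i\theta_{L'}}$, with winding number $1$ about $e^{i\theta_L}$ and winding number $0$ about $e^{-i\theta_L}$ and about every $e^{\pm i\theta_{L'}}$ with $L' \ne L$. (If $L$ is realized as no such distance, take $r(t)$ to encircle nothing, so the monodromy, like $s_{k,L}$, is the identity.) Such a loop exists because these points are isolated and confined to the unit circle while $|r_0| < 1$: travel along the circle of radius $r_0$ to an angle near $\theta_L$, briefly bump just outside the unit circle to encircle $e^{i\theta_L}$ alone, and return.

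Next I would follow the fibre around this loop. A tableau with $a_k, a_{k+1}$ in the same row or column is a $2$-to-$1$ label of a single fibre point, whose defining data (the unique solution of~\eqref{eqn:leadtermequations}) is a single-valued rational function of $r$ away from $r = -1$, so that point returns to itself; a tableau pair $\{T, T'\}$ of distance $L' \ne L$ likewise returns to itself, since $r(t)$ is null-homotopic in $\CC \setminus \{e^{\pm i\theta_{L'}}\}$ and the relevant square root of the discriminant is therefore single-valued along it. For a pair $\{T, T'\}$ of distance exactly $L$, however, the two roots of~\eqref{eqn:quadraticforomega} are interchanged: along $r(t)$ the argument of the discriminant --- a nonzero multiple of $(r - e^{i\theta_L})(r - e^{-i\theta_L})$ --- increases by $2\pi$, flipping the sign of its square root. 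That this interchange is exactly $s_{k,L}$ follows from the asymptotic tie-breaking discussion at the end of Section~\ref{sec:tworoots} (as used in the proof of Theorem~\ref{thm:simpleslide}): since $r_0$ is real with $0 < r_0 < 1$, the fibre labelling at the basepoint is the honest bijection of Corollary~\ref{cor:realfibres}, and under it the interchange of the two fibre points of a distance-$L$ pair is precisely $x_T \leftrightarrow x_{T'} = x_{s_{k,L}(T)}$. Every fibre along the loop is reduced: along the real homotopies by Theorem~\ref{thm:ssconj}, and along $r(t)$ because staying off the points $e^{\pm i\theta_{L'}}$ keeps all $|\ordSYT(\lambda/\mu)|$ fibre points simple. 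Finally, the initial homotopy acts as the identity on tableau labels, preserving the order of magnitudes of the roots, so conjugating by it leaves the monodromy equal to $s_{k,L}$.

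I expect the main obstacle to be the bookkeeping in the last step: verifying that the branch interchange forced by $r(t)$ is identified with the transposition $s_{k,L}$ and nothing coarser or finer --- one must check that no label of a fibre point other than $T$ and $T'$ is disturbed (immediate here, since only $a_k$ moves and all other roots stay at separated scales) and track carefully, through the tie-breaking of Section~\ref{sec:tworoots}, which branch of~\eqref{eqn:quadraticforomega} carries the label $T$ versus $T'$ at the two ends of $r(t)$. The construction of $r(t)$ with the prescribed winding numbers and the reductions via Corollaries~\ref{cor:nomonodromy} and~\ref{cor:realfibres} are routine once one observes that the forbidden points are isolated and lie on the unit circle.
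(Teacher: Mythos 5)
Your proof follows essentially the same route as the paper: both reduce to the setting of Section~\ref{sec:tworoots} (you via a real homotopy to widely separated scales, the paper via the $\psK$-reduction of Section~\ref{sec:complexandreal}), apply Lemma~\ref{lem:distancecalculation} to the discriminant of~\eqref{eqn:quadraticforomega}, and build a loop in the leading-coefficient ratio $r=c_k/c_{k+1}$ that winds once around the critical locus attached to distance $L$ and not around any other, so that exactly the distance-$L$ label pairs swap and the tie-breaking of Section~\ref{sec:tworoots} identifies the swap with $s_{k,L}$. One genuine refinement worth noting: your critical ratios $r=(1-2L^{-2})\pm 2iL^{-1}\sqrt{1-L^{-2}}=e^{\pm i\theta_L}$ on the unit circle are what Lemma~\ref{lem:distancecalculation} actually forces (they satisfy $r^2-2(1-2L^{-2})r+1=0$), whereas the paper's stated locus~\eqref{eqn:criticalline}, $c_k=(1+2L^{-2}+2L^{-1}\sqrt{1+L^{-2}})\,c_{k+1}$, is a real ratio $\rho$ satisfying $(\rho+1)^2=4(1+L^{-2})\rho$ rather than $4(1-L^{-2})\rho$ — an apparent sign slip that does not affect the paper's argument (which only uses that the critical loci for distinct $L$ are distinct), but which your version silently corrects.
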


\begin{proof}
By the discussion in Section~\ref{sec:complexandreal}, it is enough to 
prove the result over $\psK$, with
$\bolda$ as in Section~\ref{sec:tworoots}.

Consider a loop $\bolda_t = \{(a_1)_t, \dots, (a_N)_t\}$, 
$t \in [0,1]$ in the space of multisubsets of $\PP^1(\psK)$,
with $\bolda_0 = \bolda_1 = \bolda$, and for all $t\in [0,1]$,
$(a_i)_t = a_i$ if $i \neq k, k+1$, 
and $\|a_k\| = \|(a_k)_t\| = \|(a_{k+1})_t\| = \|a_{k+1}\|$.
Let $(c_i)_t := \leadcoeff((a_i)_t)$.
Suppose that $((c_k)_t, (c_{k+1})_t) \in \CC^2$ is a small loop around 
the line
\begin{equation}
\label{eqn:criticalline}
\{(c_k, c_{k+1}) \mid c_k = \big(1+2L^{-2} + 2L^{-1}\sqrt{1+L^{-2}}\big) c_{k+1}\}\,.
\end{equation}
We show that if $x_t \in \psX(\bolda_t)$, then $x_1 = s_{k,L}(x_0)$.

Recall that a tableau $T_t \in \DIT(\Rect; \bolda_t)$ corresponds 
to one or two points.  If the distance between $a_k$ and $a_{k+1}$
is $1$ then these entries are in the same row or column, 
so $T_{x_t}$ corresponds 
to the single point $x_t$; hence $x_1 = x_0 = s_{k,L}(x_0)$.  Otherwise,
$T_{x_t}$ corresponds to $x_t$ and another point.  Since the equations
that give the leading terms of these two points,
\eqref{eqn:tworootssum} and~\eqref{eqn:tworootsprod}, define a quadratic
map
$$(\omega_k, \omega_{k+1}) \mapsto 
\left(
\frac{q_{\alpha_k'} \omega_k + q_{\alpha_k} \omega_{k+1}}{q_{\alpha_{k+1}}},
\frac{q_{\alpha_{k-1}}}{q_{\alpha_{k+1}}}\, \omega_k \omega_{k+1}
\right)\,,
$$
the two points
will swap places under the monodromy of the loop $\bolda_t$ if and only 
if the leading coefficients of $\bolda_t$ 
wrap around the critical locus~\eqref{eqn:discriminant}.  By
Lemma~\ref{lem:distancecalculation}, the line~\eqref{eqn:criticalline}
is contained in the critical locus iff the distance between 
$(a_k)_t$ and $(a_{k+1})_t$ is $L$.
\end{proof}

\subsection{Monodromy and limits}
\label{sec:limits}

Let $\bolda  = \{a_1, \dots, a_N\} \subset \RP^1$, with
$|a_1|< \dots < |a_N|$, and let
$x \in X(\bolda)$.

\begin{definition} \rm
Suppose that we have a decomposition of $\RP^1$ as the disjoint union 
of $k$ intervals $I_1, \dots, I_k$.
We then obtain a partition $(\boldb_1, \dots , \boldb_k)$
of $\bolda$, where $\boldb_i = \bolda \cap I_i$.  A partition of $\bolda$ 
of this form is called a \bfdef{consecutive partition} of $\bolda$.
Any number $c_i \in I_i$ is called 
an \bfdef{internal point} for $\boldb_i$.
\end{definition}

The two main cases we will consider are given in the
examples below.

\begin{example} \rm
\label{ex:zeroinfinitycase}
For any $\bolda$ as above,
let $\boldb_0 = \{a_1, \dots, a_i\}$, $\boldb_\infty = \{a_{i+1}, \dots, a_N\}$
for some $i$.
Then $(\boldb_0, \boldb_\infty)$ is a consecutive partition of $\bolda$.
Moreover, for any tableau $T \in \SYT(\Rect; \bolda)$,
$T|_{\boldb_0}$ and $T|_{\boldb_\infty}$ are both subtableaux of $T$.
\end{example}

\begin{example} \rm
\label{ex:tableaucase}
Let $\boldb = \{a_i, a_{i+1}, \dots, a_j\}$, 
$\boldb^c = \bolda \setminus \boldb$,
for some $i\leq j$.
Suppose that all elements of $\boldb$ have the same sign. Then
$(\boldb, \boldb^c)$ is a consecutive partition of $\bolda$.
In this case, for any tableau $T \in \SYT(\Rect; \bolda)$,
$T|_\boldb$ is a subtableau of $T$.
\end{example}

Suppose we have a consecutive partition $(\boldb_1, \dots, \boldb_k)$ of
$\bolda$,  coming from intervals $I_1, \dots, I_k \subset \RP^1$. 
Let $c_i$ be an internal point for $b_i$.
We define points $x_{[\boldb_i{\to}c_i]} \in X$, as follows.

For each fixed $i$, we form a path
$\bolda_t \Subset \RP^1$, $t \in [0,1]$ satisfying the following
conditions:

\bigskip
\begin{minipage}{2.6in}
\begin{enumerate}
\item[(i)] 
$\bolda_0 = \bolda$;
\item[(ii)] 
$\bolda_t$ is a set for $t \in [0,1)$;
\end{enumerate}
\end{minipage}
\begin{minipage}{2.6in}
\begin{enumerate}
\item[(iii)] 
$\bolda_t \cap I_j= \boldb_j$, for $j \neq i$, $t \in [0,1]$;
\item[(iv)] 
$\bolda_1 = \big(\bigcup_{j \neq i} \boldb_j \big) 
\cup \{c_i, c_i, \dots, c_i\}$.
\end{enumerate}
\end{minipage}

\bigskip

Let $x_0 = x$, and $x_t \in X(\bolda_t)$.  Since $\bolda_t$ is
a set for $x_t \in [0,1)$ there is a unique such path for 
$t \in [0,1)$.  
We define
$x_{[\boldb_i \to c_i]}$ to be the limit point $x_1 = \lim_{t \to 1} x_t$.
By Corollary~\ref{cor:nomonodromy}, $x_{[\boldb_i \to c_i]}$ depends only
on $\boldb_i$ and $c_i$, not on the path chosen.

Since $\bolda_1$ is a multiset, the fibre $X(\bolda_1)$ is 
typically non-reduced, so there may be distinct points
$x, x' \in X(\bolda)$ with the same limit point
$x_{[\boldb_i \to c_i]} = x'_{[\boldb_i \to c_i]}$.
This defines an equivalence relation on the fibre $X(\bolda)$,
which we will study further in Section~\ref{sec:LRrule}.

\begin{theorem}
\label{thm:monodromylimit}
Let $\bolda$, $(\boldb, \boldb^c)$ be as in Example~\ref{ex:tableaucase},
and let $x, x' \in X(\bolda)$.
Suppose that $L \geq 2$, and $k \notin \{i-1, i, \dots, j\}$.
For any internal point $c_1$ for $\boldb$, we have
$x_{[\boldb \to c_1]} = x'_{[\boldb \to c_1]}$
if and only if 
$s_{k,L}(x)_{[\boldb \to c_1]} = 
s_{k,L}(x')_{[\boldb \to c_1]}$.
\end{theorem}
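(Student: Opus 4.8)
The plan is to realize $s_{k,L}$ as the monodromy of a loop that is \emph{supported on roots of $\boldb^c$}, hence ``disjoint'' from the collapsing operation $\ell\colon x\mapsto x_{[\boldb\to c_1]}$; the two then commute, so $\ell$ intertwines $s_{k,L}$ with a bijection of the (in general non-reduced) limit fibre, and the stated equivalence falls out of bijectivity. Here $\ell\colon X(\bolda)\to X(\bolda_1)_{\mathrm{red}}$ with $\bolda_1=\boldb^c\cup\{c_1,\dots,c_1\}$ is well defined by Corollary~\ref{cor:nomonodromy}. Since $L\ge 2$, the entries $k$ and $k{+}1$ of any tableau in $\ordSYT(\Rect)$ occupy different rows and different columns (they can only share a row or column when their boxes are adjacent, i.e. at distance $1$), so $s_{k,L}$ is an involution, both on tableaux and --- via Corollary~\ref{cor:realfibres} --- on $X(\bolda)$; hence it is enough to prove the implication $\ell(x)=\ell(x')\Rightarrow\ell(s_{k,L}(x))=\ell(s_{k,L}(x'))$, the converse following on replacing $x,x'$ by $s_{k,L}(x),s_{k,L}(x')$. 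By Theorem~\ref{thm:monodromyloops}, $s_{k,L}$ is the monodromy of a loop $\gamma$, based at $\bolda$, that moves only the two roots $a_k,a_{k+1}$ and along which every fibre is reduced. The hypothesis $k\notin\{i{-}1,i,\dots,j\}$ forces $k$ and $k{+}1$ to be indices that are both $\le i{-}1$ or both $\ge j{+}1$, so $a_k,a_{k+1}\in\boldb^c$; and every path realizing $\ell$ fixes $\boldb^c$ pointwise.

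Next I would run the two operations simultaneously. Fix a collapsing path $(\bolda_t)_{t\in[0,1]}$ for $\ell$ (so $\bolda_0=\bolda$, $\bolda_1=\boldb^c\cup\{c_1,\dots,c_1\}$, and for $t<1$ all $N$ roots are distinct reals with $\boldb^c$ --- in particular $a_k,a_{k+1}$ --- held fixed), and for $s<1$ let $\mathrm{pc}_s\colon X(\bolda)\to X(\bolda_s)$ be the monodromy of $(\bolda_t)_{t\in[0,s]}$, so that $\mathrm{pc}_s(x)\to\ell(x)$ as $s\to1$. Each $\bolda_s$ is again a set of distinct reals, so transporting $\gamma$ along this path produces a loop $\gamma_s$ based at $\bolda_s$, homotopic to $\delta_s^{-1}\gamma\,\delta_s$ through fibres that stay reduced (no roots collide for $s<1$, and the branching $\gamma$ encircles is the discriminant locus of~\eqref{eqn:quadraticforomega}, which depends only on $a_k,a_{k+1}$ and not on the position of $\boldb$). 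Homotopy invariance of monodromy over the reduced locus then gives, with $m_s$ the monodromy of $\gamma_s$,
$$\mathrm{pc}_s\big(s_{k,L}(x)\big)=m_s\big(\mathrm{pc}_s(x)\big)\qquad(s<1),$$
and letting $s\to1$ the left side tends to $\ell(s_{k,L}(x))$. Thus the theorem reduces to showing that $\lim_{s\to1}m_s(\mathrm{pc}_s(x))$ depends only on $\ell(x)$; equivalently, that $m_s$ extends continuously as $s\to1$ to a bijection $\sigma$ of $X(\bolda_1)_{\mathrm{red}}$ (so $x_s\to y$ implies $m_s(x_s)\to\sigma(y)$), whence $\ell\circ s_{k,L}=\sigma\circ\ell$ and both implications follow at once.

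The main obstacle is exactly this last step: over the reduced locus the ``commuting square'' is formal, and all the content is in controlling the $s\to1$ limit through the degenerate fibre $X(\bolda_1)$. I would handle it by carrying the argument over the Puiseux field $\psK$ (legitimate by Section~\ref{sec:complexandreal}, and as in the proof of Theorem~\ref{thm:monodromyloops}), where Theorem~\ref{thm:leadtermequations} makes the decoupling explicit. A point of the fibre over $\bolda_s$ is recorded by a solution $(\omega_1,\dots,\omega_N)$ of the system~\eqref{eqn:leadtermequations}; the loop $\gamma_s$ acts by interchanging the two roots $\omega_k,\omega_{k+1}$ of the quadratic~\eqref{eqn:quadraticforomega}, whose coefficients involve only $a_k,a_{k+1}$ and the partitions $\alpha_{k-1},\alpha_k,\alpha_k',\alpha_{k+1}$ --- data that does not degenerate as $s\to1$ --- while fixing every other $\omega_m$; and the collapse $[\boldb\to c_1]$ only reorganizes the coordinates $\omega_m$ with $m\in\{i,\dots,j\}$ (this is the content of the analysis behind Theorem~\ref{thm:zeroinfinitylimit}, after the sign reversal of Example~\ref{ex:tableaucase}). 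Since $\{k,k{+}1\}\cap\{i,\dots,j\}=\varnothing$, the braiding and the collapse commute at the level of these coordinates, which gives the desired continuous extension $\sigma$ and completes the proof. In short, the transparent part is the monodromy square; the substantive part is the limit, and passing to $\psK$ --- where the relevant $\omega$-coordinates separate cleanly --- is the right device for making it rigorous.
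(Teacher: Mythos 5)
Your high-level plan is close to the paper's: pass to $\psK$, exploit that the loop realizing $s_{k,L}$ moves only $a_k, a_{k+1}$ (which lie in $\boldb^c$ since $k\notin\{i-1,\dots,j\}$), while the collapse only degenerates $\boldb$, and conclude that the two operations ``commute.'' The observation that $s_{k,L}$ is an involution (so one implication suffices) is a clean reduction, and the ``commuting square'' over the reduced locus is indeed formal. But the substantive step --- which you correctly flag as ``the main obstacle'' --- is not actually discharged by what you write, and the tool you cite is the wrong one.

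The gap: you assert that, over $\psK$, the braiding acts only on $\omega_k,\omega_{k+1}$ while the collapse ``only reorganizes the coordinates $\omega_m$ with $m\in\{i,\dots,j\}$,'' so the two commute ``at the level of these coordinates.'' But the $\omega$-coordinates of Theorem~\ref{thm:leadtermequations} are attached to a \emph{diagonally increasing} tableau; when $\|a_i\|=\dots=\|a_j\|$ (which is exactly the regime the limit forces you into), the tableau is only weakly increasing, the initial ideal $\initial_{\boldw(T)}(I)$ is \emph{not} binomial, and Theorem~\ref{thm:leadtermequations} does not hand you a system of $\omega$'s for the degenerate fibre. There is no clean separation into disjoint coordinate blocks to appeal to. Your invocation of Theorem~\ref{thm:zeroinfinitylimit} is also misplaced: that theorem treats the $0/\infty$ degeneration of Example~\ref{ex:zeroinfinitycase}, not the collapse of a consecutive block $\boldb$ to an \emph{interior} point $c_1$ (Example~\ref{ex:tableaucase}), and it does not assert commutation with a monodromy loop. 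What the paper actually proves, and what you need, is that the initial ideal splits the Pl\"ucker coordinates into three regimes: those with $\mu\le\nu\le\lambda$ are governed by a (non-toric) system depending only on $\{a_i,\dots,a_j\}$ and the shapes $\mu,\lambda$ (hence unaffected by moving $a_k,a_{k+1}$); those with $\nu\le\mu$ or $\nu\ge\lambda$ are governed by Theorem~\ref{thm:leadtermequations} applied to $T|_\mu$ or $T|_{\lambda^c}$ (where the $\omega_k,\omega_{k+1}$ live); and the rest are pinned down by the partial Gel'fand--Tsetlin relations~\eqref{eqn:partialGT}. One then shows that the ratios $\leadterm(p_\nu(x_t))/\leadterm(p_\nu(x'_t))$ are constant in $t$ across all three regimes, so ``close together'' is preserved by the loop. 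Your proposal replaces this analysis with a one-line claim of coordinate separation; that claim is plausible heuristics but it is precisely what requires proof. Without the partial-degeneration argument, the continuous extension $\sigma$ you invoke is unsupported.
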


\begin{proof}
In order to see what happens to the point $x_t \in X(\bolda_t)$,
as $t$ approaches 1, we need to study the fibre $X(\bolda_t)$
when $|a_i|, \dots, |a_j|$ are close to each other.  Working over 
$\psK$, this corresponds to looking at $\psX(\bolda)$ where,
$\bolda = \{a_1, \dots, a_N\} \subset \psK^\times$ and
\begin{equation}
\label{eqn:assumptiononbolda}
\|a_1\| \leq  \dots \leq \|a_{i-1}\|
< \|a_i\| = \dots = \|a_j\|
< \|a_{j+1}\| \leq \dots \leq \|a_N\| \,.
\end{equation}
Let $T$ be a weakly increasing tableau of shape $\Rect$ with 
values in $\bolda$.  Let $\mu$ denote the shape of 
$T|_{\{a_1, \dots a_{i-1}\}}$, and let $\lambda$ be the shape of
$T|_{\{a_1, \dots a_j\}}$.  We will assume, moreover, that $T|_\mu$ 
and $T_{\lambda^c}$ are diagonally increasing.

We say $T$ corresponds
to a point $x \in \psX(\bolda)$ if~\eqref{eqn:valplucker}
holds.  
Since $T$ may not be a diagonally 
increasing tableau, Theorem~\ref{thm:leadtermequations} no longer 
provides us with the explicit system of equations needed to find
the leading terms of the points in $\psX(\bolda)$ corresponding to $T$.
However, it is still possible to write down such a system of equations
by following the same lines of argument.
The main difference between this case and the analysis in the proof
of Theorem~\ref{thm:leadtermequations} is that initial ideal of
the Pl\"ucker ideal $\initial_{\boldw(T)}(I)$ is not binomial;
it represents only a partial degeneration of
$X$ to the Gel'fand-Tsetlin toric variety.

We consider the initial forms of the 
equations~\eqref{eqn:satisfyplucker} and~\eqref{eqn:inthefibre}
to obtain a system of equations for the leading terms of the
Pl\"ucker coordinates of a point corresponding to $T$.  
This system will necessarily
be solvable, because the equations can be further degenerated to
the case of Theorem~\ref{thm:leadtermequations}, which is solvable.
The initial forms of~\eqref{eqn:satisfyplucker} are given by
$\initial_{\boldw(T)}(I)$.
We have
\begin{equation}
\label{eqn:partialGT}
p_\nu p_{\nu'} - p_{\nu \wedge \nu'} p_{\nu \vee \nu'} 
\in \initial_{\boldw(T)}(I) \qquad 
\text{if $\nu \leq \mu$ or $\nu \geq \lambda$\,.}
\end{equation}
The other
quadratic relations in $\initial_{\boldw(T)}(I)$ are more 
complicated;
however they only involve
partitions which are between $\mu$ and $\lambda$.  
Moreover the initial forms of~\eqref{eqn:inthefibre} only involve
partitions in this range.
From this, one can see that the system of equations one obtains 
for $\leadterm(p_\nu)$ for $\mu \leq \nu \leq \lambda$ depends only on 
$\{a_i, \dots, a_j\}$ and the shapes $\lambda$ and $\mu$.

Moreover given a solution to these equations, we can solve
for all remaining $\leadterm(p_\nu)$. 
For $\nu \leq \mu$, the equations determining $\leadterm(p_\nu)$,
up to a constant, are the same as those given by 
Theorem~\ref{thm:leadtermequations} applied to the tableau $T|_\mu$.
The constant is determined by the fact that we already have a value for
$\leadterm(p_\mu)$.
Similarly, for $\nu \geq \lambda$, the equations for $\leadterm(p_\nu)$
are given by Theorem~\ref{thm:leadtermequations} applied to $T|_{\lambda^c}$.
All other $\leadterm(p_\nu)$ are determined by~\eqref{eqn:partialGT}.

Now suppose that all inequalities in~\eqref{eqn:assumptiononbolda} are
strict, except for $\|a_k\| = \|a_{k+1}\|$.  Consider a loop 
$\bolda_t = \{(a_1)_t, \dots, (a_N)_t\}$, 
$t \in [0,1]$ in the space of multisubsets of $\PP^1(\psK)$,
with $\bolda_0 = \bolda_1 = \bolda$, and for all $t\in [0,1]$,
$(a_l)_t = a_l$ if $i \neq k, k+1$, 
$\|a_k\| = \|(a_k)_t\| = \|(a_{k+1})_t\| = \|a_{k+1}\|$.
Let $x_t, x'_t \in \psX(\bolda_t)$.  

Given a sufficiently small positive real number $\varepsilon$,
suppose $x_0$ and $x'_0$ are ``close together'', in that they have the
following properties: 
for all $\nu \in \Lambda$,
$\val(p_\nu(x_0)) = \val(p_\nu(x'_0))$, and
$$
1 - \varepsilon  < \left|\frac{\leadterm(p_\nu(x_0))}
{\leadterm(p_\nu(x'_0))}\right| < 1+\varepsilon
\,.
$$
Since the valuation of Pl\"ucker coordinates of $x_t$, $x'_t$ will be
independent of $t$, the points $x_t$ and $x'_t$ must correspond 
to the same weakly increasing tableau $T_t$.  
We claim, moreover, that 
\begin{equation}
\label{eqn:ltconstantratio}
\frac{\leadterm(p_\nu(x_0))}
{\leadterm(p_\nu(x'_0))} =
\frac{\leadterm(p_\nu(x_t))}
{\leadterm(p_\nu(x'_t))} 
\qquad
\text{for all $\nu \in \Lambda$, $t \in [0,1]$}.  
\end{equation}
This follows from the discussion above.
If $\mu \leq \nu \leq \lambda$, \eqref{eqn:ltconstantratio} is true because 
$\leadterm(p_\nu(x_t))$ and $\leadterm(p_\nu(x'_t))$ are both independent 
of $t$.  If $\nu \leq \mu$ (or
$\nu \geq \lambda$), \eqref{eqn:ltconstantratio} is true because 
$\leadterm(p_\nu(x_t))$ and $\leadterm(p_\nu(x'_t))$
must come from the same solution to~\eqref{eqn:leadtermequations}
for the tableau $T_t|_{\mu}$ (resp. $T_t|_{\lambda^c}$).  For all other 
$\nu \in \Lambda$, the claim follows from~\eqref{eqn:partialGT}.

In particular, we see that
$x_1$ and $x'_1$ are close together.
Taking our loop to be the loop
whose monodromy is given by $s_{k,L}$
(as defined in the proof of Theorem~\ref{thm:monodromyloops}),
the result follows.
\end{proof}

\section{Equivalence, dual equivalence, and the Littlewood-Richardson 
rule}
\label{sec:LRrule}

\subsection{Interpretations of equivalence and dual equivalence}

Throughout this section, we assume that
$\bolda  = \{a_1, \dots, a_N\} \subset \RP^1$, with
$|a_1|< \dots < |a_N|$.
We now show that in the situation in 
Example~\ref{ex:tableaucase},
the equivalence relations on $X(\bolda)$ defined in
Section~\ref{sec:limits} by 
$x_{[\boldb_i \to c_i]} = x'_{[\boldb_i \to c_i]}$ 
are combinatorially described by the
equivalence and dual equivalence relations on tableaux.

We will need the following lemma:
\begin{lemma}
\label{lem:realzeroinfinity}
Let $(\boldb_0, \boldb_\infty)$ be as in 
Example~\ref{ex:zeroinfinitycase}.  Let $T \in \SYT(\Rect; \bolda)$
and let $x_T \in X(\bolda)$ be the corresponding point.  Let
$\mu$ be the shape of $T|_{\boldb_0}$. 
\begin{enumerate}
\item[(i)] The point
$(x_T)_{[\boldb_0 \to 0]}$ is in $X_{\mu}(0)$ and corresponds to the 
tableau $T|_{\boldb_\infty}$.
\item[(ii)] The point
$(x_T)_{[\boldb_\infty \to \infty]}$ is in $X_{\mu^\vee}(\infty)$ and
corresponds to the tableau $T|_{\boldb_0}$.
\end{enumerate}
\end{lemma}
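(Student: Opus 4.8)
The plan is to read both assertions off from Theorem~\ref{thm:zeroinfinitylimit}, which is the same statement over the Puiseux field $\psK$, using the reduction of Section~\ref{sec:complexandreal} together with the no-monodromy Corollary~\ref{cor:nomonodromy}. Part~(i) is precisely the specialization of Theorem~\ref{thm:zeroinfinitylimit} in which the ``large'' block $\{a_{j+1},\dots,a_N\}$ is empty: there $\lambda=\Rect$, the partition $\mu$ is the shape of $T|_{\boldb_0}$, the tableau $T_t$ of that theorem is obtained from $T$ by scaling only the entries of $\boldb_0$ by $t$, and its conclusion reads $x'\in\psX_\mu(0)$ (the constraint at $\infty$ being vacuous, since $\lambda=\Rect$), with $x'$ corresponding to $T|_{\Rect/\mu}=T|_{\boldb_\infty}$. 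Part~(ii) is the specialization in which the ``small'' block is empty: there $\mu_{\mathrm{thm}}=\varnothing$, the role of $\lambda$ is played by the shape of $T|_{\boldb_0}$ (our $\mu$), the tableau $T_t$ scales only the entries of $\boldb_\infty$ by $t^{-1}$, and the conclusion reads $x'\in\psX_{\mu^\vee}(\infty)$ with $x'$ corresponding to $T|_{\boldb_0}$. One can alternatively deduce~(ii) from~(i) using the norm- and order-preserving symmetry coming from $z\mapsto 1/z$, which interchanges the flags $F_\bullet(0)$ and $F_\bullet(\infty)$; but since both parts are immediate specializations of Theorem~\ref{thm:zeroinfinitylimit} there is little to be saved.

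To transfer this from $\psK$ to $\RR$ I would first invoke Corollary~\ref{cor:nomonodromy} to replace $\bolda$ by a widely separated configuration $|a_1|\ll\dots\ll|a_N|$: there is an order- and sign-preserving homotopy of $\bolda$ to such a configuration, along which all fibres are reduced, the monodromy is trivial, the bijection $T\leftrightarrow x_T$ of Corollary~\ref{cor:realfibres} is carried along, and the limit points of Section~\ref{sec:limits} (which depend only on the homotopy class of the coalescing path) transform compatibly. In this regime I would realize $\bolda$ as the evaluation at $u^{1/\delta}=\varepsilon$, $0<\varepsilon\ll 1$, of a tuple of real Laurent monomials $a_\ell(u)=r_\ell u^{v_\ell}$ with $v_1>\dots>v_N$, exactly as in Section~\ref{sec:complexandreal}; then over $\psK$ the path coalescing $\boldb_0$ to $0$ is multiplication of those entries by a parameter of positive valuation, and the limit defining $(x_T)_{[\boldb_0\to 0]}$ over $\CC$ (for $\varepsilon$ small) is the evaluation of the $\psK$-limit supplied by Theorem~\ref{thm:zeroinfinitylimit}. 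Membership in $X_\mu(0)$ (resp.\ $X_{\mu^\vee}(\infty)$) is a closed, Pl\"ucker-defined condition (Lemma~\ref{lem:pluckerschubert}, Theorem~\ref{thm:svroots}), so it persists under specialization of $u$ and evaluation at $\varepsilon$; and the ``corresponds to'' clause, which over $\psK$ says that $x'$ satisfies the leading-term equations~\eqref{eqn:leadtermplucker} for the skew tableau $T|_{\boldb_\infty}$ (resp.\ for $T|_{\boldb_0}$), transfers by the same mechanism, the relevant sub-fibre inside $X_\mu(0)$ (resp.\ $X_{\mu^\vee}(\infty)$) being reduced with exactly $|\ordSYT(\Rect/\mu)|$ (resp.\ $|\ordSYT(\mu)|$) points by Theorem~\ref{thm:zeroinfinitylimit}(iii).

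The one step that needs genuine care is reconciling the two notions of ``limit'' in play: the intrinsic, path-independent point $x_{[\boldb_i\to c_i]}$ of Section~\ref{sec:limits}, defined over $\RR$ by literally letting a block of real roots coalesce, and the Puiseux-series limit $\lim_{t\to 0}x_{T_t}$ appearing in Theorem~\ref{thm:zeroinfinitylimit}. I expect the clean way to bridge them is to observe that both compute the flat limit of one and the same family of reduced, zero-dimensional fibres $X(\bolda_t)$ (reducedness by Theorem~\ref{thm:ssconj}), so that on the analytic representative $\bar x(\varepsilon)$ constructed in Section~\ref{sec:complexandreal} the two limits coincide exactly, whereupon Corollary~\ref{cor:nomonodromy} frees us to compute $x_{[\boldb_i\to c_i]}$ along the separated, Puiseux-modelled path used above. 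A minor secondary point to record is that $0$ (respectively $\infty$) is a legitimate internal point for $\boldb_0$ (respectively $\boldb_\infty$), which is immediate since these blocks consist of the roots of smallest (respectively largest) norm.
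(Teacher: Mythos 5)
Your proposal is correct and takes essentially the same route as the paper, which proves the lemma with the single line ``This follows from Theorem~\ref{thm:zeroinfinitylimit}.'' You correctly identify the two specializations of that theorem ($j=N$, $\lambda=\Rect$ for part (i); $i=1$, $\mu_{\mathrm{thm}}=\varnothing$ for part (ii)) and you supply the passage from $\psK$ to $\RR$ and the reconciliation of the two notions of limit, both of which the paper leaves implicit but which are genuinely part of the argument (they rely on the machinery of Section~\ref{sec:complexandreal}, Corollary~\ref{cor:nomonodromy}, and Theorem~\ref{thm:ssconj} in exactly the way you describe). Your observation that $0$ and $\infty$ are legitimate internal points for $\boldb_0$ and $\boldb_\infty$ is also correct and worth recording, since it is what makes $(x_T)_{[\boldb_0\to 0]}$ and $(x_T)_{[\boldb_\infty\to\infty]}$ well defined in the sense of Section~\ref{sec:limits}.
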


\begin{proof}
This follows from Theorem~\ref{thm:zeroinfinitylimit}.
\end{proof}

For $\phi \in \SL_2(\RR)$ and $T \in \SYT(\Rect; \bolda)$, define
$\phi(T) := \slide_{\phi(\bolda)}(T)$.  Here sliding is defined using
any path homotopic to a path of the form $\phi_t(\bolda)$,
$t \in [0,1]$, where $\phi_t \in \SL_2(\RR)$ is any path from
$\phi_0 = \smallidmatrix$
to $\phi_1 = \phi$.
From Theorems~\ref{thm:slidewelldefined} and~\ref{thm:geomslide},
we have that $\phi(x_T) = x_{\phi(T)}$; hence $\phi(T)$ does not depend 
on the choice of $\phi_t$. 

\begin{theorem}
\label{thm:equivalence}
Let $T, T' \in \SYT(\Rect; \bolda)$, and let $(\boldb, \boldb^c)$
be as in Example~\ref{ex:tableaucase}.
Choose any internal point $c_2$ for $\boldb^c$.
Let $x_T, x_{T'} \in X(\bolda)$ be the points
corresponding to $T$ and $T'$ respectively.  
Then 
$T|_\boldb \sim T'|_\boldb$ if and only if 
$(x_T)_{[\boldb^c \to c_2]} = (x_{T'})_{[\boldb^c \to c_2]}$.
\end{theorem}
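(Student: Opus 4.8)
The plan is to reduce the statement essentially to its own definition, by computing the limit point $(x_T)_{[\boldb^c\to c_2]}$ explicitly and showing that it depends on $T$ only through $\rectify(T|_\boldb)$, via a correspondence that is bijective on tableaux of a fixed straight shape. Write $\boldb=\{a_i,\dots,a_j\}$, $\boldb_{<}=\{a_1,\dots,a_{i-1}\}$ and $\boldb_{>}=\{a_{j+1},\dots,a_N\}$, so $\boldb^c=\boldb_{<}\sqcup\boldb_{>}$; since all elements of $\boldb$ share a sign, $I_1$ is a bounded interval and $\infty$ is always an internal point for $\boldb^c$, so I would first treat $c_2=\infty$. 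The idea is to collapse $\boldb^c$ to $\infty$ in two stages, first $\boldb_{>}$ and then $\boldb_{<}$; by Corollary~\ref{cor:nomonodromy} the resulting limit is independent of the admissible path, so in the second stage one is free to route the $\boldb_{<}$-part (through $0$ if necessary, which is harmless because the relevant inner shape has become empty) up to the largest available norm before sending it off to $\infty$.

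First stage: applying Lemma~\ref{lem:realzeroinfinity}(ii) with $\boldb_0=\{a_1,\dots,a_j\}$ and $\boldb_\infty=\boldb_{>}$ shows that $(x_T)_{[\boldb_{>}\to\infty]}$ is the point $z_T\in X_{\lambda^\vee}(\infty)$ corresponding to the straight-shape tableau $T|_{\{a_1,\dots,a_j\}}\in\SYT(\lambda;\{a_1,\dots,a_j\})$, where $\lambda\vdash j$ is its shape. Second stage: within this fibre, by Theorem~\ref{thm:geomslide} in the skew formulation of Section~\ref{sec:labelling} with empty inner partition, I would slide the $\boldb_{<}$-entries past the $\boldb$-entries until they occupy the largest norms; writing $\mu\subseteq\lambda$ for the shape of $T|_{\boldb_{<}}$ and viewing $T|_{\{a_1,\dots,a_j\}}$ as $T|_\boldb$ (of shape $\lambda/\mu$) together with its inner filling, this slide is precisely \emph{tableau switching}~\cite{BSS}, so it carries $z_T$ to the point corresponding to the straight tableau $V$ of shape $\lambda$ whose $\boldb$-part is $V|_\boldb=\rectify(T|_\boldb)$, a tableau of straight shape $\nu$ (the rectification shape of $T|_\boldb$), and whose $\boldb_{<}$-part fills $\lambda/\nu$. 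Collapsing $\boldb_{<}$, now the final segment of $\{a_1,\dots,a_j\}$, to $\infty$ and invoking Lemma~\ref{lem:realzeroinfinity}(ii) (equivalently Theorem~\ref{thm:zeroinfinitylimit}(ii)) once more with $\boldb_0=\boldb$ shows that $(x_T)_{[\boldb^c\to\infty]}$ lies in $X_{\nu^\vee}(\infty)\cap\bigcap_{a\in\boldb}X_\lowBox(a)$ and corresponds, under the bijection of Theorem~\ref{thm:zeroinfinitylimit}(iii) (empty inner shape, which applies since $\boldb$ satisfies restriction~\I), to $\rectify(T|_\boldb)\in\SYT(\nu;\boldb)$. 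Sch\"utzenberger's theorem that $\rectify$ does not depend on the auxiliary tableau~\cite{Sch}, together with the symmetry and well-definedness of tableau switching, makes this description unambiguous.

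The theorem then follows. If $T|_\boldb\sim T'|_\boldb$ then $\rectify(T|_\boldb)=\rectify(T'|_\boldb)$, so $(x_T)_{[\boldb^c\to c_2]}$ and $(x_{T'})_{[\boldb^c\to c_2]}$ are the points of the common fibre $X_{\nu^\vee}(c_2)\cap\bigcap_{a\in\boldb}X_\lowBox(a)$ carrying the same tableau label, hence coincide; conversely, if the two limit points coincide, they lie in a common Schubert variety $X_{\nu^\vee}(c_2)$ (forcing $T|_\boldb$ and $T'|_\boldb$ to have the same rectification shape) and carry equal labels, so $\rectify(T|_\boldb)=\rectify(T'|_\boldb)$, i.e.\ $T|_\boldb\sim T'|_\boldb$. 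I expect the main obstacle to be making the second stage rigorous: one must run the leading-term/labelling analysis of Section~\ref{sec:labelling} relative to the Schubert variety $X_{\lambda^\vee}(\infty)$ as ambient space rather than all of $X$, verify the jeu de taquin fact that sliding an inner sub-tableau of shape $\mu$ out past the remainder of a straight-shape tableau realizes the rectification of the complementary skew tableau, and match the tableau-labelling conventions on the target Schubert cell; a secondary point needing care is that collapsing all of $\boldb^c$ to a \emph{single} internal point (rather than splitting it between $0$ and $\infty$, which would instead record $T|_\boldb$ on the nose rather than only up to $\sim$) gives the same equivalence relation for every admissible choice of $c_2$.
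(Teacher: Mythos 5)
Your overall plan is the same as the paper's — send $\boldb^c$ to $\infty$, recognize the limit point as labelled by $\rectify(T|_\boldb)$, and read off the conclusion from the label — but your execution takes a different route that leaves two genuine gaps which you acknowledge but do not close, and they are not merely cosmetic.

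The first gap is the two-stage collapse. You first send $\boldb_{>}$ to $\infty$, landing in the degenerate fibre $X_{\lambda^\vee}(\infty)\cap\bigcap_{a\in\boldb_0}X_\lowBox(a)$, and then want to slide the $\boldb_{<}$ entries past the $\boldb$ entries \emph{inside that degenerate fibre} and collapse again. But the sliding correspondence of Theorem~\ref{thm:geomslide} and the labelling machinery of Section~\ref{sec:labelling} are set up over $\bolda$ a set of $N$ points satisfying restrictions~\I--\III; once $\infty$ appears with multiplicity $N-j$ the fibre is non-reduced and none of those theorems apply as stated. Rerunning the leading-term analysis with $X_{\lambda^\vee}(\infty)$ as ambient (your ``main obstacle'') would require redeveloping Theorem~\ref{thm:leadtermequations} for a Schubert variety rather than the Grassmannian; the paper avoids this entirely by reversing the order of operations. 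In the paper's proof, the rectification (sliding $\boldb_{<}$ past $\boldb$) is performed \emph{first}, inside the honest fibre $X(\bolda)$ where the sliding theory is available, as part of computing $\phi(T)$ for an $\SL_2(\RR)$-transformation $\phi$ sending $c_2\mapsto\infty$ and rearranging norms so all of $\boldb^c$ becomes largest; only then is Lemma~\ref{lem:realzeroinfinity}(ii) applied, once. Your stage-one/stage-two order is exactly the problematic reversal.

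The second gap is the reduction to $c_2=\infty$. You are right that $\infty$ is always an admissible internal point, but the theorem is quantified over all internal points $c_2$, and ``the same equivalence relation for every choice of $c_2$'' is a claim, not a proof; justifying it requires an $\SL_2(\RR)$-equivariance argument (Proposition~\ref{prop:sl2equivariant}), which is precisely the device the paper introduces $\phi$ for. Since you need that tool anyway, the cleaner move is the paper's: let $\phi$ simultaneously normalize $c_2$ to $\infty$ and push all of $\boldb^c$ to large norm, so that a single application of Lemma~\ref{lem:realzeroinfinity}(ii) finishes the argument and the two-stage collapse never arises.
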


\begin{proof}
Since the action of $\SL_2(\RR)$ on $\RP^1$ can take any three
points to any any other three points in the same orientation, 
there exists
$\phi \in \SL_2(\RR)$ be such that $\phi(c_2) = \infty$,
$|\phi(a)| < 1$ for $a \in \boldb$, and $|\phi(a)| > 1$ for 
$a \in \boldb^c$.

Consider $\phi(T)$ and $\phi(T')$.  We can compute these, via
a path $\bolda_t$ which first
rectifies $T|_\boldb, T'|_\boldb$.
By Theorem~\ref{thm:slidewelldefined}, it follows that
$\phi(T)|_{\phi(\boldb)} = \slide_{\phi(\boldb)}(\rectify(T|_\boldb))$ and
$\phi(T')|_{\phi(\boldb)} = \slide_{\phi(\boldb)}(\rectify(T'|_\boldb))$.
Thus we have that $T|_\boldb \sim T'|_\boldb$ if and only
if $\phi(T)|_{\phi(\boldb)} = \phi(T')|_{\phi(\boldb)}$.

By Proposition~\ref{prop:sl2equivariant}, 
$(x_T)_{[\boldb^c \to c_2]}  
= (x_{T'})_{[\boldb^c \to c_2]}$ if and only if 
$(x_{\phi(T)})_{[\phi(\boldb^c) \to \infty]}  
= (x_{\phi(T')})_{[\phi(\boldb^c) \to \infty]}$.   
By Lemma~\ref{lem:realzeroinfinity}(ii), 
this holds if
and only if $\phi(T)|_{\phi(\boldb)} = \phi(T')|_{\phi(\boldb)}$.
\end{proof}

\begin{remark}  \rm
\label{rmk:rectwelldefined}
Let $\boldc = \{a_1, \dots a_{i-1}\} \subset \bolda$ be the entries 
of $T$ to the left of $T|_\boldb$.  
As an addendum to the proof of
Theorem~\ref{thm:equivalence},
we give a quick proof of the fact that 
$\rectify(T|_\boldb) = \slide_{T|_\boldc}(T|_\boldb)$ does not
depend on $T|_\boldc$.  
\begin{proof}
Keeping the same notation, assume now that $c_2 = 0$.  
Consider $x_0 = (x_T)_{[\boldc \to 0]}$.
By Lemma~\ref{lem:realzeroinfinity}(i), 
$x_0$ corresponds to the tableau obtained by deleting the
entries in $\boldc$ from $T$.
Now, $\phi(x_0) = \phi(x_T)_{[\phi(\boldc) \to \infty]}$, so
by Lemma~\ref{lem:realzeroinfinity}(ii), 
$T_{\phi(x_0)}$ is the tableau obtained by deleting the
entries in $\phi(\boldc)$ from $\phi(T)$.  Since
$\rectify(T|_\boldb)$ can be determined from $\phi(T)|_{\phi(\boldb)}$,
it can also be determined from $x_0$, which
does not depend on $T|_\boldc$.
\end{proof}
\end{remark}

\begin{theorem}
\label{thm:dualequivalence}
Let $T, T' \in \SYT(\Rect; \bolda)$, and let $(\boldb, \boldb^c)$
be as in Example~\ref{ex:tableaucase}.
Choose any internal point $c_1$ for $\boldb$.
Let $x_T, x_{T'} \in X(\bolda)$ be the points
corresponding to $T$ and $T'$ respectively.  
\begin{enumerate}
\item[(i)] If $T|_{\boldb^c}  \neq T'|_{\boldb^c}$, then 
$(x_T)_{[\boldb \to c_1]} \neq (x_{T'})_{[\boldb \to c_1]}$.
\item[(ii)] If $T|_{\boldb^c} = T'|_{\boldb^c}$,
then $T|_\boldb \sim^* T'|_\boldb$ 
if and only if $(x_T)_{[\boldb \to c_1]}
= (x_{T'})_{[\boldb \to c_1]}$.
\item[(iii)] The point  $(x_T)_{[\boldb \to c_1]}$ is in  $X_\lambda(c_1)$, 
where $\lambda$ is the rectification shape of $T|_\boldb$.
\end{enumerate}
\end{theorem}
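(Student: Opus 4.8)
The plan is to transport the limit $(x_T)_{[\boldb\to c_1]}$ to the $0$-and-$\infty$ situation already controlled by Lemma~\ref{lem:realzeroinfinity} (hence by Theorem~\ref{thm:zeroinfinitylimit}). By Proposition~\ref{prop:sl2equivariant} one has $\phi\big((x_T)_{[\boldb\to c_1]}\big)=(x_{\phi(T)})_{[\phi(\boldb)\to\phi(c_1)]}$ for $\phi\in\SL_2(\RR)$, and by Theorems~\ref{thm:slidewelldefined} and~\ref{thm:geomslide} the action of $\phi$ on tableaux is by sliding, which preserves the equivalence class — hence the rectification shape — of the subtableau $T|_\boldb$ (as in the proof of Theorem~\ref{thm:equivalence}). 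Since $\boldb$ is a compact block interior to the interval $I_1\ni c_1$, I can choose $\phi$ carrying $c_1$ to $\infty$ and $\boldb$ to the block of largest absolute value, so that $(\phi(\boldb^c),\phi(\boldb))$ has the form of Example~\ref{ex:zeroinfinitycase}. Then Lemma~\ref{lem:realzeroinfinity}(ii) places $(x_{\phi(T)})_{[\phi(\boldb)\to\infty]}$ in $X_{\mu^\vee}(\infty)$, where $\mu$ is the shape of $\phi(T)|_{\phi(\boldb^c)}$, equivalently the inner shape of $\phi(T)|_{\phi(\boldb)}$, which therefore has shape $\Rect/\mu$. Because $c^{\Rect}_{\mu\nu}=\int_X[X_{\Rect^\vee}][X_\mu][X_\nu]=\int_X[X_\mu][X_\nu]=\delta_{\mu^\vee\nu}$ by Poincar\'e duality on $X$, every tableau of shape $\Rect/\mu$ rectifies to $\mu^\vee$; so $\mu^\vee$ is the rectification shape of $\phi(T)|_{\phi(\boldb)}$, hence of $T|_\boldb$, and applying $\phi^{-1}$ gives~(iii).

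For~(i) and~(ii) I would run the same reduction with the opposite choice: $\phi(c_1)=0$ and $\phi(\boldb)$ the block of \emph{smallest} absolute value. Then Lemma~\ref{lem:realzeroinfinity}(i) shows $(x_{\phi(T)})_{[\phi(\boldb)\to 0]}$ lies in $X_\kappa(0)$, with $\kappa$ the rectification shape of $T|_\boldb$, and corresponds bijectively to $\phi(T)|_{\phi(\boldb^c)}$, a standard Young tableau of shape $\Rect/\kappa$; hence $(x_T)_{[\boldb\to c_1]}=(x_{T'})_{[\boldb\to c_1]}$ if and only if $\phi(T)|_{\phi(\boldb^c)}=\phi(T')|_{\phi(\boldb^c)}$. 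Writing $\boldb_<:=\{a_1,\dots,a_{i-1}\}$, $\boldb_>:=\{a_{j+1},\dots,a_N\}$ and routing $\phi$ so that $\boldb$ passes $\boldb_<$ while $\boldb_>$ remains inert, tableau switching identifies the pair $\big(\phi(T)|_{\phi(\boldb)},\,\phi(T)|_{\phi(\boldb_<)}\big)$ with the switch of $(T|_{\boldb_<},T|_\boldb)$ and identifies $\phi(T)|_{\phi(\boldb_>)}$ with $T|_{\boldb_>}$. Since switching is an involution and $T|_{\boldb_<}$ has shape the inner shape of $T|_\boldb$, one has $\phi(T)|_{\phi(\boldb)}=\rectify(T|_\boldb)$ and $\rectify\big(\phi(T)|_{\phi(\boldb_<)}\big)=T|_{\boldb_<}$; thus $\phi(T)|_{\phi(\boldb^c)}$ determines $T|_{\boldb_<}$ and $T|_{\boldb_>}$, i.e. $T|_{\boldb^c}$. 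Equal limits therefore force $T|_{\boldb^c}=T'|_{\boldb^c}$, which is the contrapositive of~(i).

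For~(ii) assume $T|_{\boldb^c}=T'|_{\boldb^c}$; the $\boldb_>$-parts then agree automatically and the reduction collapses to: the limits agree iff the switches of $(T|_{\boldb_<},T|_\boldb)$ and $(T|_{\boldb_<},T'|_\boldb)$ have the same second component (equivalently, iff $\slide_{T|_\boldb}$ and $\slide_{T'|_\boldb}$ agree on the input $T|_{\boldb_<}$). If $T|_\boldb\sim^* T'|_\boldb$ this holds because $\slide_{T|_\boldb}$ and $\slide_{T'|_\boldb}$ are literally the same operation. Conversely, if they agree on $T|_{\boldb_<}$, switching back exhibits $T|_\boldb$ and $T'|_\boldb$ as obtained by sliding two standard fillings $R,R'$ of the same straight shape $\kappa$ through a common skew tableau; any two fillings of a straight shape are dual equivalent and dual equivalence is preserved by slides, so $T|_\boldb\sim^* T'|_\boldb$, and~(iii) supplies the rectification-shape statement. (Alternatively, (ii) can be deduced from the $s_{k,L}$-invariance of the relation ``$x_{[\boldb\to c_1]}=x'_{[\boldb\to c_1]}$'' in Theorem~\ref{thm:monodromylimit}, together with a multiplicity count via flatness — Theorem~\ref{thm:flatfinite} and Corollary~\ref{cor:svrootsmultiplicities} — and the fact that rectification is a bijection from a dual equivalence class onto $\ordSYT$ of its rectification shape.)

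The main obstacle is the bookkeeping concealed in ``route $\phi$ so that $\boldb$ passes $\boldb_<$ while $\boldb_>$ remains inert'': one must select $\phi\in\SL_2(\RR)$ compatibly with the sign hypothesis of Example~\ref{ex:tableaucase} and verify that the resulting operation on tableaux is exactly tableau switching applied to the pair $(T|_{\boldb_<},T|_\boldb)$, with the $\boldb_>$-part genuinely unchanged. Once this correspondence is pinned down, the rest is an exercise in the standard properties of rectification, tableau switching and dual equivalence (\cite{Sch,Hai,BSS}); the only properly geometric ingredients are Lemma~\ref{lem:realzeroinfinity} (a consequence of Theorem~\ref{thm:zeroinfinitylimit}) and Poincar\'e duality on $X$.
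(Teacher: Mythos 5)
Your proof is structurally close to the paper's: both transport the limit to a $0$/$\infty$ situation via $\SL_2(\RR)$, apply Lemma~\ref{lem:realzeroinfinity} (i.e.\ Theorem~\ref{thm:zeroinfinitylimit}), and then translate the resulting equality of tableaux into a dual equivalence statement. Two points deserve attention.

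For (iii), your route sends $\phi(c_1)=\infty$, so you end up with $\phi(T)|_{\phi(\boldb)}$ of anti-straight shape $\Rect/\mu$ and must identify its rectification shape as $\mu^\vee$. Your justification --- reading off $c^{\Rect}_{\mu\nu}=\delta_{\mu^\vee\nu}$ from Poincar\'e duality and then concluding that every standard filling of $\Rect/\mu$ rectifies to $\mu^\vee$ --- invokes the Littlewood--Richardson rule, and is therefore circular here: Theorem~\ref{thm:dualequivalence} is precisely the tool the paper deploys in Section~\ref{sec:lrrule} to \emph{prove} the Littlewood--Richardson rule. The combinatorial fact you need (fillings of $\Rect/\mu$ rectify to $\mu^\vee$) is true and has elementary proofs, but as written your argument for (iii) is not self-contained. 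The paper avoids this by sending $\phi(c_1)=0$ instead: with $\phi(\boldb)$ the \emph{innermost} block, $\phi(T)|_{\phi(\boldb)}$ is literally $\rectify(T|_\boldb)$ (produced by the very slide that computes $\phi(T)$), so its straight shape is the rectification shape with no auxiliary input, and one finishes with Lemma~\ref{lem:realzeroinfinity}(i) rather than (ii).

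For the converse direction of (ii), your argument --- switching back to exhibit $T|_\boldb$ and $T'|_\boldb$ as slides of two straight-shape fillings of a common shape through a common skew tableau, then citing that straight-shape fillings are all dual equivalent and that dual equivalence is preserved by slides --- is logically sound but differs from the paper's route. The paper instead shows that $\slide_{T|_\boldb}$ and $\slide_{T'|_\boldb}$ agree on the two specific inputs $T|_{\boldc}$ and $T|_{\hat\boldc}$, and then bootstraps to all inputs using the $s_{k,L}$-invariance statement of Theorem~\ref{thm:monodromylimit} together with transitivity of the $s_{k,L}$. Your version trades that geometric ingredient for two classical facts from~\cite{Hai}; you correctly flag the $s_{k,L}$ route as the alternative. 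Both work, but the paper's choice better matches its stated aim of \emph{deriving} jeu-de-taquin structure from the geometry of the Wronski map rather than presupposing it.
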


(Note that $T|_{\boldb^c}$, $T'|_{\boldb^c}$ will generally not be 
subtableaux of $T$ and $T'$.)

\begin{proof}
Let $\bolda_t  = \{(a_1)_t, \dots, (a_N)_t\} \Subset \RP^1$ be a 
path used to define 
$(x_T)_{[\boldb \to c_1]}$ and $(x_{T'})_{[\boldb \to c_1]}$.
Let $T_t = \slide_{\bolda_t}(T)$, $T'_t = \slide_{\bolda_t}(T')$.
Let $\boldb_t = \bolda_t \setminus \boldb^c$.
Assume that the path of each $(a_i)_t \in \boldb_t$ is monotonic.
Then for all of $t \in [0,1)$,
$T_t|_{\boldb_t} \sim^* T|_\boldb$, and 
$T'_t|_{\boldb_t} \sim^* T'|_\boldb$.  
We may therefore replace $T$ by $T_{1-\varepsilon}$; hence
we may assume that all elements of $\boldb$ are arbitrarily close to 
$c_1$.

Let $\phi \in \SL_2(\RR)$ be a transformation
such that $\phi(c_1) = 0$, and
consider $\phi(T)$ and $\phi(T')$.
Since the elements of $\phi(\boldb)$ are assumed to be
close to zero,
by Lemma~\ref{lem:realzeroinfinity}(i),
${\phi(T)}|_{\phi(\boldb^c)} = \phi(T')|_{\phi(\boldb^c)}$ 
if and only if 
$(x_{\phi(T)})_{[\phi(\boldb) \to 0]} = (x_{\phi(T')})_{[\phi(\boldb) \to 0]}$.
By Proposition~\ref{prop:sl2equivariant}, this holds if and only
if
$(x_T)_{[\boldb \to c_1]} = (x_{T'})_{[\boldb \to c_1]}$.
Thus, to prove (i) and (ii), we must therefore show that 
${\phi(T)}|_{\phi(\boldb^c)} = \phi(T')|_{\phi(\boldb^c)}$
if and only if
$T|_\boldb \sim^* T'|_\boldb$ and $T|_{\boldb^c} = T'|_{\boldb^c}$.

Let $\boldc = \{a_1, \dots, a_{i-1}\} \subset \bolda$, 
be the entries in the subtableau of $T$ to the left of $T|_\boldb$.
Let $\hat \boldc = \{a_{j+1}, \dots, a_N\} \subset \bolda$, 
be entries in the subtableau of $T$ to the right of $T|_\boldb$.
Note that ${\phi(T)}, \phi(T')$ can be computed by a path that  brings
the values in $\boldb$ past the values of $\boldc$ without changing
their relative order.  Thus
by definition of dual equivalence,
if $T|_\boldb \sim^* T'|_\boldb$ and $T|_{\boldb^c} = T'|_{\boldb^c}$
then ${\phi(T)}|_{\phi(\boldb^c)} = \phi(T')|_{\phi(\boldb^c)}$.

Conversely, suppose ${\phi(T)}|_{\phi(\boldb^c)} = \phi(T')|_{\phi(\boldb^c)}$.
We can recover $T|_{\boldb^c}$ and $T'|_{\boldb^c}$ by sliding
(the answer does not depend on 
${\phi(T)}|_{\phi(\boldb)}, \phi(T')|_{\phi(\boldb)}$);
hence we must have $T|_{\boldb^c} = T'|_{\boldb^c}$.
Moreover, from the argument of reverse direction, we see that 
$\slide_{T|_\boldb}(T|_{\boldc}) = \slide_{T|_\boldb}(T|_{\boldc})$.
By the same reasoning with $0$ replaced by $\infty$, we have
$\slide_{T|_\boldb}(T|_{\hat \boldc}) = \slide_{T|_\boldb}(T|_{\hat \boldc})$.

Let $\lambda/\mu$ be the shape of $T|_\boldb$ and $T'|_\boldb$.
To show that $T|_\boldb \sim^* T'|_\boldb$, we must show that
$\slide_{T|_\boldb}(V) = \slide_{T'|_\boldb}(V)$, for any tableau
$V$ in $\SYT(\mu; \boldc)$ or in $\SYT(\lambda^c; \hat \boldc)$.
Since we already know this for $V = T|_{\boldc}$ and $V = T_{\hat \boldc}$,
and since the operators $s_{k,L}$ act transitively on 
$\SYT(\mu; \boldc)$ and on $\SYT(\lambda^c; \hat \boldc)$,
the result now follows from Theorem~\ref{thm:monodromylimit}.

Finally, for (iii) we have already seen that
${\phi(T)}|_{\phi(\boldb^c)}$ has shape $\lambda^c$, where $\lambda$ is
the rectification shape of $T|_\boldb$. 
By Lemma~\ref{lem:realzeroinfinity}(i), we have that
$(x_{\phi(T)})_{[\phi(\boldb) \to 0]} \in X_\lambda(0)$,
and so
by Proposition~\ref{prop:sl2equivariant}, 
$(x_T)_{[\boldb \to c_1]} \in X_\lambda(c_1)$.
\end{proof}

\begin{remark} \rm
In the proof of Theorem~\ref{thm:dualequivalence}, we showed
that if $\slide_{T|_\boldb}(V) = \slide_{T'|_\boldb}(V)$ for some
$V \in \SYT(\mu; \boldc)$, then the same is true for every
$V \in \SYT(\mu; \boldc)$.  In fact our argument
shows that if $\slide_{T|_\boldb}(V) = \slide_{T'|_\boldb}(V)$ for 
any $V \in \SYT(\mu; \boldc)$, then
$(x_T)_{[\boldb \to c_1]} = (x_{T'})_{[\boldb \to c_1]}$, whence 
$T|_\boldb \sim^* T'|_\boldb$. This
combinatorial fact is a theorem of Haiman 
(see \cite[Theorem 2.10]{Hai}).

\end{remark}

\subsection{Combinatorial consequences}

A number of other combinatorial facts about 
equivalence and dual equivalence 
can be reproved using Theorems~\ref{thm:equivalence}
and~\ref{thm:dualequivalence}.

\begin{corollary}
\label{cor:dualequivsizes}
The size of a dual equivalence class with rectification shape
$\lambda$ is $|\ordSYT(\lambda)|$.
\end{corollary}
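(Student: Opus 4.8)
The plan is to use the geometric interpretation of dual equivalence classes provided by Theorem~\ref{thm:dualequivalence} to reduce the statement to a Schubert calculus count. Fix a consecutive partition $(\boldb, \boldb^c)$ of $\bolda$ as in Example~\ref{ex:tableaucase}, where $\boldb$ has $|\lambda|$ elements all of the same sign, and pick an internal point $c_1$ for $\boldb$. By Theorem~\ref{thm:dualequivalence}(ii), for tableaux $T, T' \in \SYT(\Rect; \bolda)$ with $T|_{\boldb^c} = T'|_{\boldb^c}$, the dual equivalence $T|_\boldb \sim^* T'|_\boldb$ is equivalent to $(x_T)_{[\boldb \to c_1]} = (x_{T'})_{[\boldb \to c_1]}$; and by Theorem~\ref{thm:dualequivalence}(i), if $T|_{\boldb^c} \neq T'|_{\boldb^c}$ the limit points are already distinct. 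So the fibres of the map $x_T \mapsto (x_T)_{[\boldb \to c_1]}$, restricted to points $x_T$ with a fixed value of $T|_{\boldb^c}$, are in bijection with the dual equivalence classes of $T|_\boldb$ (having that fixed $T|_{\boldb^c}$ as companion data); and I want to show each such fibre has size $|\ordSYT(\lambda)|$ when the rectification shape is $\lambda$.

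First I would observe that $(x_T)_{[\boldb \to c_1]}$ lands in $X_\lambda(c_1)$ by Theorem~\ref{thm:dualequivalence}(iii), where $\lambda = \rectify(T|_\boldb)$. More precisely, the limit point $x' = (x_T)_{[\boldb \to c_1]}$ lies in the fibre $X(\bolda_1)$ where $\bolda_1 = \boldb^c \cup \{c_1, \dots, c_1\}$ ($|\boldb|$ copies). By Corollary~\ref{cor:svrootsmultiplicities} (or its analogue for the full fibre), the scheme-theoretic fibre $X(\bolda_1)$ is supported on $\bigcup_{\lambda \vdash |\boldb|} X_\lambda(c_1) \cap (\text{conditions from } \boldb^c)$, and the point $x'$ appears in $X_\lambda(c_1)$ with multiplicity exactly $|\ordSYT(\lambda)|$ inside the cycle. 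The key point is that this multiplicity counts precisely the number of points $x_T \in X(\bolda)$ (with the appropriate $T|_{\boldb^c}$) limiting to $x'$: since $\Wr$ is flat (Theorem~\ref{thm:flatfinite}) the total multiplicity is preserved under the degeneration $\bolda_t \to \bolda_1$, so the number of preimages of $x'$ under the limit map, counted with the (trivial, by Theorem~\ref{thm:ssconj}) multiplicities at the general fibre $X(\bolda)$, equals the multiplicity of $x'$ in $X(\bolda_1)$, which is $|\ordSYT(\lambda)|$ by Corollary~\ref{cor:svrootsmultiplicities}.

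So the steps in order are: (1) set up the consecutive partition and the limit map $x_T \mapsto (x_T)_{[\boldb \to c_1]}$, and invoke Theorem~\ref{thm:dualequivalence} to identify its fibres with dual equivalence classes; (2) identify the target point's Schubert class: it lies in $X_\lambda(c_1)$ with $\lambda$ the rectification shape; (3) use flatness of $\Wr$ to equate ``number of points of $X(\bolda)$ mapping to $x'$'' with ``multiplicity of $x'$ in the cycle $[X(\bolda_1)]$'', and then (4) apply Corollary~\ref{cor:svrootsmultiplicities} to compute that multiplicity as $|\ordSYT(\lambda)|$. One should be slightly careful in step (3): $x'$ lies in a union $\bigcup_{\lambda} X_\lambda(c_1)$, but since the $X_\lambda(c_1)$ are distinct Schubert varieties and $x'$ is a \emph{general} point of the relevant one (it is forced into the open cell $X^\circ_\lambda(c_1)$ by genericity of the remaining data $\boldb^c$), only the term $|\ordSYT(\lambda)| \cdot X_\lambda(c_1)$ contributes locally at $x'$.

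The main obstacle I anticipate is making step (3) rigorous: one must be sure that the limit map $x_T \mapsto (x_T)_{[\boldb \to c_1]}$ genuinely realizes the flat degeneration, i.e. that the multiplicity of $x'$ in the special fibre equals the number of points in the general fibre tending to it. This is exactly the content of flatness together with the reducedness of the general fibre $X(\bolda)$ (Theorem~\ref{thm:ssconj}, which applies since $\bolda \subset \RP^1$ has distinct real elements), so the argument is sound, but one needs to phrase the count at the level of the local ring at $x'$ and verify that no other component of the special fibre passes through $x'$. Given that $\boldb^c$ can be chosen generic, this follows from a dimension count in the intersection $X_\lambda(c_1) \cap \bigcap_{a \in \boldb^c} X_\lowBox(a)$, which is transverse by Theorem~\ref{thm:ssconj} applied again (or by the Shapiro--Shapiro reducedness of such intersections), so $x'$ is a reduced point of that intersection and the only local contribution is $|\ordSYT(\lambda)|$.
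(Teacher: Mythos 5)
Your proof is correct and follows essentially the same route as the paper's: fix a consecutive partition $(\boldb, \boldb^c)$ as in Example~\ref{ex:tableaucase}, use Theorem~\ref{thm:dualequivalence} to identify dual equivalence classes with fibres of the limit map $x_T \mapsto (x_T)_{[\boldb \to c_1]}$ over a point supported on $X_\lambda(c_1)$, invoke flatness of $\Wr$ to equate the class size with the multiplicity of that point in the special fibre $X(\bolda_1)$, and compute that multiplicity via Corollary~\ref{cor:svrootsmultiplicities}. You are more explicit than the paper about the subtleties (the limit lying in the open cell $X^\circ_\lambda(c_1)$ so only one term of the cycle $\sum_\mu |\ordSYT(\mu)|\cdot[X_\mu(c_1)]$ contributes, and the need for reducedness at $x'$ in the residual intersection with $\bigcap_{a\in\boldb^c}X_\lowBox(a)$), but the underlying argument is the same.
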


\begin{proof}
Let $(\boldb, \boldb^c)$, be a partition of $\bolda$, as in 
Example~\ref{ex:tableaucase}.
Let $c_1$ be an internal point for $\boldb$,
and let $\bolda_t$ be the path used to define $x_{[\boldb \to c_1]}$.
By Theorem~\ref{thm:dualequivalence}, a dual equivalence
class with rectification shape $\lambda$ corresponds to a point
in $X(\bolda_1)$ supported on $X_\lambda(c_1)$.
Since $\Wr$ is flat,
the size of the dual equivalence class is the multiplicity of the
point in $X(\bolda_1)$.
By Corollary~\ref{cor:svrootsmultiplicities}, the multiplicity of
such a point is $|\ordSYT(\lambda)|$.
\end{proof}

We can also prove a fact that was used in 
Section~\ref{sec:equivrelations} to give an alternate
formulation on the Littlewood-Richardson rule.

\begin{corollary}
\label{cor:classintersection}
There is a unique tableau in the intersection of any equivalence
class of tableaux with a dual equivalence class of the same rectification
shape.  
\end{corollary}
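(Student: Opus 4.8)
The plan is to translate the statement into geometry through Theorems~\ref{thm:equivalence} and~\ref{thm:dualequivalence} and then close it by a multiplicity count. First I would fix the skew shape $\lambda/\mu$ of the given dual equivalence class $D$, set $m := |\lambda/\mu|$, and let $\nu$ be the common rectification shape. Choose $\bolda = \{a_1,\dots,a_N\}\subset\RR$ with $|a_1|<\dots<|a_N|$ and a consecutive partition $\bolda = \boldc\sqcup\boldb\sqcup\hat\boldc$ in which $\boldc = \{a_1,\dots,a_{|\mu|}\}$ lies in a tiny interval about $0$, $\hat\boldc$ (the $|\mu^c|$ largest entries) lies in a tiny interval about $\infty$, and $\boldb = \{a_{|\mu|+1},\dots,a_{|\mu|+m}\}$ is of one sign; then for every $T\in\SYT(\Rect;\bolda)$ the pieces $T|_\boldc$, $T|_{\{a_1,\dots,a_{|\mu|+m}\}}$, $T|_\boldb$ have shapes of the form $\mu$, $\lambda$, $\lambda/\mu$, and $(\boldb,\boldb^c)$ with $\boldb^c := \boldc\sqcup\hat\boldc$ is again a consecutive partition. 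Fixing once and for all standard fillings $V_0\in\ordSYT(\mu)$ and $W_0\in\ordSYT(\Rect/\lambda)$ and putting $T_S := V_0\sqcup S\sqcup W_0\in\SYT(\Rect;\bolda)$ for $S\in\SYT(\lambda/\mu;\boldb)$, Corollary~\ref{cor:realfibres} turns the statement into: among the $S$, exactly one satisfies $S\sim S_E$ and $S\sim^* S_D$, where $S_E$ and $S_D$ are chosen representatives of the given equivalence class $E$ and of $D$, both of rectification shape $\nu$.

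Next I would extract the geometry. By Theorem~\ref{thm:dualequivalence}, since $T_S|_{\boldb^c}=V_0\sqcup W_0$ is independent of $S$, the point $w(S):=(x_{T_S})_{[\boldb\to c_1]}$ (for an internal point $c_1$ of $\boldb$) depends only on the dual equivalence class of $S$, is injective on dual equivalence classes of shape $\lambda/\mu$, lies in $X_\nu(c_1)$, and has fibre $\{x_{T_S}:S\in D\}$ under the collapse map; since $\Wr$ is flat and $X(\bolda)$ is reduced, $|D|$ equals the multiplicity of $w(S_D)$, which by Corollary~\ref{cor:svrootsmultiplicities} is $|\ordSYT(\nu)|$ (this reproves Corollary~\ref{cor:dualequivsizes}). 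Dually, Theorem~\ref{thm:equivalence} shows $S\mapsto(x_{T_S})_{[\boldb^c\to c_2]}$ is injective on equivalence classes meeting $\SYT(\lambda/\mu;\boldb)$; and collapsing instead $\boldc\to 0$ and $\hat\boldc\to\infty$ (Lemma~\ref{lem:realzeroinfinity}, Theorem~\ref{thm:zeroinfinitylimit}) together with another application of flatness and Corollary~\ref{cor:svrootsmultiplicities} shows that the tableaux of shape $\lambda/\mu$ and rectification shape $\nu$ form exactly $|\ordSYT(\nu)|$ equivalence classes, all of the same size. Combining these, $\sum_{E'}|E'\cap D| = |D| = |\ordSYT(\nu)|$ is precisely the number of such equivalence classes $E'$, so it remains only to prove existence: every equivalence class of rectification shape $\nu$ meets $D$.

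For existence I would degenerate $\boldb$ and $\boldb^c$ simultaneously, collapsing $\boldb\to c_1$ and $\boldb^c\to c_2$ to land in the fibre $X(\{c_1^{(m)},c_2^{(N-m)}\})$; by flatness and Poincar\'e duality of Schubert classes this fibre is supported on the points $X_\kappa(c_1)\cap X_{\kappa^\vee}(c_2)$ ($\kappa\vdash m$), the one at $\kappa$ carrying multiplicity $|\ordSYT(\kappa)|\cdot|\ordSYT(\kappa^\vee)|$. Factoring this combined collapse through the collapse of $\boldb$ alone exhibits the fibre over $X_\nu(c_1)\cap X_{\nu^\vee}(c_2)$ as a union of dual equivalence classes of rectification shape $\nu$ (carrying the fixed $V_0,W_0$-data); factoring it through the collapse of $\boldb^c$ alone exhibits the same fibre as a union of equivalence classes of rectification shape $\nu$. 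The hard part will be reconciling these two descriptions: one must show that along the degeneration the multiplicity $|\ordSYT(\nu)|\cdot|\ordSYT(\nu^\vee)|$ of each limiting point splits as a genuine product indexed by the pairs (equivalence class, dual equivalence class) of rectification shape $\nu$ — equivalently, that prescribing the dual equivalence class imposes no constraint on the equivalence class. Here I expect to invoke Theorem~\ref{thm:monodromylimit} in the same way it was used in the proof of Theorem~\ref{thm:dualequivalence} — transitivity of the operators $s_{k,L}$ acting on the $V_0$- and $W_0$-regions — to force each pair to contribute exactly one point of $X(\bolda)$, which is the desired unique tableau in $E\cap D$.
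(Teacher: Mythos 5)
Your proposal takes a genuinely different route from the paper, and it has a real gap at the step you yourself flag as ``the hard part.'' The paper proves a preparatory Lemma~\ref{lem:geomclassintersection} which does all the work at once: given $x_1 \in X_\nu(c_1)$ and $x_2 \in X_{\nu^\vee}(c_2)$ in the two singly-collapsed fibres, there is a \emph{unique} $x \in X(\bolda)$ with $x_{[\boldb_1\to c_1]}=x_1$ and $x_{[\boldb_2\to c_2]}=x_2$. The key move in that lemma's proof is to normalize $c_1=0$, $c_2=\infty$; then Lemma~\ref{lem:realzeroinfinity} makes the two collapsed points literally read off the complementary subtableaux $T_x|_{\lambda}$ and $T_x|_{\lambda^c}$ of a rectangular tableau, so $T_x$ is assembled from (and conversely determined by) its two pieces, giving existence and uniqueness simultaneously. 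The corollary then drops out by picking one auxiliary point for $T$ and another for $T'$ and invoking Theorems~\ref{thm:equivalence} and~\ref{thm:dualequivalence}.

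Your counting reduction is sound as logic (given $|D|=|\ordSYT(\nu)|$ and at most $|\ordSYT(\nu)|$ equivalence classes of rectification shape $\nu$, existence and uniqueness become equivalent), but the existence argument is asserted rather than proved. You write that you ``expect to invoke Theorem~\ref{thm:monodromylimit}\dots to force each pair to contribute exactly one point,'' appealing to transitivity of the $s_{k,L}$ on $\SYT(\mu;\boldc)$ and $\SYT(\lambda^c;\hat\boldc)$. But those operators move the $V_0$- and $W_0$-data, i.e.\ change $T|_{\boldb^c}$ while fixing the dual equivalence class of $T|_\boldb$ (that is exactly how they are used in the proof of Theorem~\ref{thm:dualequivalence}); they do not obviously move $S=T|_\boldb$ within its dual equivalence class through all the equivalence classes of rectification shape $\nu$, which is what you need. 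What is missing is precisely the content of Lemma~\ref{lem:geomclassintersection}. There is also a smaller accounting issue to watch: you restrict to tableaux $T_S = V_0\sqcup S\sqcup W_0$ with $V_0,W_0$ fixed, but the multiplicity $|\ordSYT(\nu)|\cdot|\ordSYT(\nu^\vee)|$ of a point of $X_\nu(c_1)\cap X_{\nu^\vee}(c_2)$ in the doubly-collapsed fibre counts preimages over the whole of $X(\bolda)$, with $V_0,W_0$ varying; to compare with your fixed-$V_0,W_0$ set you would need to divide out by $|\ordSYT(\mu)|\cdot|\ordSYT(\lambda^c)|$, and the factorization of a collapse to a non-reduced fibre through an intermediate non-reduced fibre needs to be justified (the paper's three-piece collapse in Section~\ref{sec:lrrule} is set up more carefully for exactly this reason).
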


We need an additional lemma.

\begin{lemma}
\label{lem:geomclassintersection}
Let $(\boldb_1, \boldb_2)$ be a consecutive partition of
$\bolda \subset \RP^1$, and let
$c_1, c_2$ be internal points for $\boldb_1, \boldb_2$ respectively.
Let 
$x_1 \in X(\{c_1, \dots, c_1\} \cup \boldb_2)$,
and $x_2 = X(\boldb_1 \cup \{c_2, \dots, c_2\})$.
\begin{enumerate}
\item[(i)]
If $x_1 \in X_\lambda(c_1)$ and $x_2 \in X_{\lambda^\vee}(c_2)$ for
some $\lambda \in \Lambda$, then there exists a unique point
$x \in X(\bolda)$ such that $x_i = x_{[\boldb_i \to c_i]}$ for
$i =1,2$.
\item[(ii)]
If no such $\lambda$ exists then no such point $x$ exists.
\end{enumerate}
\end{lemma}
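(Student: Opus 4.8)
The plan is to translate the problem about fibres and limit points into one about intersections of Schubert varieties, using the flatness of $\Wr$. First I would observe that the two degenerations $x \mapsto x_{[\boldb_1 \to c_1]}$ and $x \mapsto x_{[\boldb_2 \to c_2]}$ can be performed simultaneously: concatenating the two defining paths produces a path $\bolda_t$ with $\bolda_0 = \bolda$ and $\bolda_1 = \{c_1,\dots,c_1\}\cup\{c_2,\dots,c_2\}$, and the fibre $X(\bolda_1)$ is set-theoretically $\bigcup_{\lambda}\big(X_\lambda(c_1)\cap X_{\lambda^\vee}(c_2)\big)$ by~\eqref{eqn:multschubertint}, since $|\boldb_1|$ and $|\boldb_2|$ are complementary and the relevant partitions of $|\boldb_1|$ inside $\Rect$ paired with partitions of $|\boldb_2|$ force the second index to be $\lambda^\vee$ (this is the standard ``complementary Schubert conditions at two points'' computation). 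A point $x \in X(\bolda)$ degenerating to $x_1$ under the first path and to $x_2$ under the second is exactly a point of $X(\bolda)$ whose image in $X(\bolda_1)$ is the common limit; so I must count such $x$.

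Next I would set up the count via intersection numbers. Because $\Wr$ is flat (Theorem~\ref{thm:flatfinite}), the scheme $X(\bolda_1)$ has the same length as the general fibre, namely $|\ordSYT(\Rect)|$, and more precisely, performing the first degeneration alone gives $X(\{c_1,\dots,c_1\}\cup\boldb_2)$, whose cycle is $\sum_\lambda |\ordSYT(\lambda)|\,\big(X_\lambda(c_1)\cap \bigcap_{a\in\boldb_2}X_\Box(a)\big)$ by Corollary~\ref{cor:svrootsmultiplicities} applied to the $c_1$-factor. Intersecting further with the Schubert conditions at $c_2$ (i.e. completing the second degeneration), the contribution of the component $X_\lambda(c_1)$ is $|\ordSYT(\lambda)|$ times $\int_X [X_\lambda][X_{\lambda^\vee}] = 1$ if we are landing on the single reduced point $X_{\lambda^\vee}(c_2)$ is forced — more carefully, the point $x_2\in X_{\lambda^\vee}(c_2)$ together with the requirement $x\in X_\lambda(c_1)$ pins down which component of $X(\bolda_1)$ we sit over, and flatness says the number of $x\in X(\bolda)$ over that point, counted with multiplicity, equals the local multiplicity of $X(\bolda_1)$ there. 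So for part (i): given $\lambda$ with $x_1\in X_\lambda(c_1)$, $x_2\in X_{\lambda^\vee}(c_2)$, the fibre of the degeneration map over the point ``$x_1$ meeting $x_2$'' is nonempty; to get \emph{uniqueness} I would invoke Theorem~\ref{thm:dualequivalence}(ii)–(iii): the set of $x\in X(\bolda)$ with $x_{[\boldb_1\to c_1]}=x_1$ is a single dual-equivalence class of $T|_{\boldb_1}$ with rectification shape $\lambda$, and within that class the further condition $x_{[\boldb_2\to c_2]}=x_2$ picks out, by Theorem~\ref{thm:equivalence}, a single equivalence class of $T|_{\boldb_1}$; and a dual-equivalence class and an equivalence class of the same rectification shape meet in exactly one tableau (the fact quoted in Section~\ref{sec:equivrelations}). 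For part (ii): if $x_1\in X_\lambda(c_1)$ and $x_2\in X_{\lambda'^\vee}(c_2)$ with $\lambda\neq\lambda'$, then any $x$ with both limits would force $T|_{\boldb_1}$ to have rectification shape $\lambda$ (from $x_1$, via Theorem~\ref{thm:dualequivalence}(iii)) and rectification shape $\lambda'$ (from $x_2$, via Theorem~\ref{thm:equivalence} which preserves rectification shape), a contradiction; and the only way $x_1$ lies in no $X_\lambda(c_1)$ with $x_2\in X_{\lambda^\vee}(c_2)$ is exactly this shape mismatch, since every point of $X(\{c_1,\dots,c_1\}\cup\boldb_2)$ lies in some $X_\lambda(c_1)$ by Theorem~\ref{thm:svroots}.

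The main obstacle I anticipate is making the ``simultaneous degeneration'' rigorous: I need that the two limit operations commute and jointly realize the point of $X(\bolda_1)$ lying over both $x_1$ and $x_2$, and that the combinatorial description of fibres of each individual degeneration (Theorems~\ref{thm:equivalence} and~\ref{thm:dualequivalence}) can be applied in tandem on the \emph{same} tableau $T\in\SYT(\Rect;\bolda)$. Concretely, the delicate point is that $T|_{\boldb_1}$ (a subtableau of $T$) and $T|_{\boldb_2}$ (generally \emph{not} a subtableau — as warned after Theorem~\ref{thm:dualequivalence}) must be controlled at once; I would handle this by applying $\phi\in\SL_2(\RR)$ sending $c_1\mapsto 0$, $c_2\mapsto\infty$ and $\boldb_1$ inside the unit disk, $\boldb_2$ outside, so that both become genuine subtableaux of $\phi(T)$ of shapes $\mu=\lambda$ and $\lambda^c$ respectively, at which point Lemma~\ref{lem:realzeroinfinity} gives both limits simultaneously and the counting reduces cleanly to the single-tableau-in-an-intersection-of-classes fact. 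Everything else is bookkeeping with Corollary~\ref{cor:svrootsmultiplicities} and flatness.
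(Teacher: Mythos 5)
Your final paragraph actually locates the paper's proof: reduce by $\SL_2(\RR)$ to $c_1 = 0$, $c_2 = \infty$ with $\|a\| < \|a'\|$ for $a \in \boldb_1$, $a' \in \boldb_2$, and then apply Lemma~\ref{lem:realzeroinfinity}. But you stop short of noticing that this already finishes everything with no further machinery. In that normalized position, $T_x|_{\boldb_1}$ and $T_x|_{\boldb_2}$ are the two complementary subtableaux of $T_x \in \SYT(\Rect;\bolda)$, of shapes $\lambda$ and $\lambda^c$ for some $\lambda$; Lemma~\ref{lem:realzeroinfinity} says that $x_{[\boldb_1 \to 0]}$ corresponds to $T_x|_{\boldb_2}$ and lies in $X_\lambda(0)$, while $x_{[\boldb_2 \to \infty]}$ corresponds to $T_x|_{\boldb_1}$ and lies in $X_{\lambda^\vee}(\infty)$. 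Part (ii) is immediate (the two shapes are always complementary). Uniqueness in (i) is immediate because the two limits determine both complementary subtableaux, hence $T_x$, hence $x$. Existence in (i) is immediate because one can glue any $T_{x_2} \in \SYT(\lambda;\boldb_1)$ to any $T_{x_1} \in \SYT(\lambda^c;\boldb_2)$ into a single $T \in \SYT(\Rect;\bolda)$ and take $x = x_T$. No counting of multiplicities, no flatness, no appeal to equivalence or dual equivalence classes is needed.

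By contrast, the main body of your argument (paragraphs one and two) has real problems. First, you invoke Theorems~\ref{thm:equivalence} and~\ref{thm:dualequivalence} to control $T_x|_{\boldb_1}$ up to equivalence and dual equivalence and then appeal to the ``unique tableau in the intersection of an equivalence class with a dual-equivalence class of the same rectification shape'' fact. But those two theorems are stated for the setup of Example~\ref{ex:tableaucase}, which requires the collapsed block to be a norm-contiguous run whose elements all share a sign; a general consecutive partition $(\boldb_1,\boldb_2)$ need not satisfy this before you perform the $\SL_2(\RR)$ normalization, so you cannot apply them at the point where you first invoke them. Second, within the paper's own logic, that intersection-of-classes fact is exactly Corollary~\ref{cor:classintersection}, which is deduced \emph{from} the lemma you are proving; invoking it (rather than the external jeu de taquin result) makes the argument circular, and even importing it externally defeats the purpose of the paper's re-derivation. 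Third, the existence half of (i) is handled by a vague appeal to flatness and intersection numbers, whereas the paper produces the point explicitly by gluing two tableaux. In short: keep your final paragraph, discard the preceding two, and carry the $\SL_2(\RR)$ reduction all the way through --- Lemma~\ref{lem:realzeroinfinity} alone does the whole job.
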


\begin{proof}
It suffices to prove this 
when $c_1 = 0$ and $c_2 = \infty$, and $|a| < |a'|$ for all 
$a \in \boldb_1$, $a' \in \boldb_2$.  
By Lemma~\ref{lem:realzeroinfinity}, 
if $x \in X(\bolda)$, then
$x_{[\boldb_2 \to c_2]}$ 
corresponds to the tableau $T_x|_{\boldb_1} = T_x|_\lambda$,
and $x_{[\boldb_1 \to c_1]}$ 
corresponds to tableau $T_x|_{\lambda^c}$.
Thus 
$x_{[\boldb_1 \to c_1]} \in X_\lambda(c_1)$ and
$x_{[\boldb_2 \to c_2]} \in X_{\lambda^\vee}(c_2)$, from which
(ii) follows.

To prove (i), suppose that 
$x_1 \in X_\lambda(c_1)$ and $x_2 \in X_{\lambda^\vee}(c_2)$.
Then $x_1$ corresponds to a tableau 
$T_{x_1} \in \SYT(\lambda^c; \boldb_2)$;
$x_2$ corresponds to 
a tableau $T_{x_2} \in \SYT(\lambda; \boldb_1)$.
There exists $T \in \SYT(\Rect; \bolda)$ 
such that $T|_{\boldb_i}= T_{x_i}$ for $i=1,2$.  Letting
$x = x_T$, by Lemma~\ref{lem:realzeroinfinity}, we have
$x_i = x_{[\boldb_i \to c_i]}$ for $i =1,2$, as required.

To prove uniqueness, we must show that
if $x, x' \in X(\bolda)$ and 
$x_{[\boldb_i \to c_i]} = x'_{[\boldb_i \to c_i]}$
for $i = 1,2$, then $x = x'$.
By Lemma~\ref{lem:realzeroinfinity} we have 
$T_x|_{\boldb_i} = T_{x'}|_{\boldb_i}$ for $i =1,2$; hence 
$T_x = T_{x'}$ which implies $x = x'$.
\end{proof}

\begin{proof}[Proof of Corollary~\ref{cor:classintersection}]
We will show that if $T \in \ordSYT(\lambda/\mu; \boldb)$ and 
$T' \in \ordSYT(\lambda'/\mu'; \boldb)$ both have the same rectification 
shape $\nu$, there is a unique tableau
$T'' \in \ordSYT(\lambda/\mu; \boldb)$ such that $T \sim^* T''$ and
$T' \sim T''$.

Choose a point  $x \in X$ such that  $\pi(x) \subset \RP^1$ and
$T_x|_\boldb = T$.  Let $x_1 = x_{[\boldb \to c_1]}$, where $c_1$ is 
an internal point for $\boldb$.
Choose $x' \in X$ such that $\pi(x') \subset \RP^1$ and
$T_{x'}|_\boldb = T'$.  Put $\bolda = \pi(x')$, 
$\boldb^c = \bolda \setminus \boldb$.
Let $x_2 = x_{[\boldb^c \to c_2]}$,
where $c_2$ is an internal point for $\boldb^c$.

Since $T$ and $T'$ have the same rectification shape $\nu$,
$x_1 \in X_\nu(c_1)$ and $x_2 \in X_{\nu^\vee}(c_2)$.
Thus, by Lemma~\ref{lem:geomclassintersection}(i)
there exists a unique point
$x'' \in X(\bolda)$ such that $x_i = x''_{[\boldb_i \to c_i]}$ for
$i =1,2$.  
By Theorems~\ref{thm:equivalence} and~\ref{thm:dualequivalence},
$T_{x''}|_\boldb \sim^* T$ and $T_{x''}|_\boldb \sim T'$.
\end{proof}

As a final note, recall from Remark~\ref{rmk:evacuation} that 
evacuation defines a $\ZZ$-action on standard Young tableaux shape $\Rect$.

\begin{corollary}
The evacuation action on $\ordSYT(\Rect)$ has order $N$.
\end{corollary}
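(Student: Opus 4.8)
The plan is to realize one step of the evacuation action geometrically, express its $N$-th power as the monodromy of a loop coming from an $\SL_2(\RR)$-rotation, and observe that such a loop acts trivially. Write $\mathrm{ev}$ for one step of the evacuation action on $\ordSYT(\Rect)$. By Remark~\ref{rmk:evacuation} together with Theorem~\ref{thm:geomslide}, $\mathrm{ev}$ is the monodromy of the Wronski map around the ``basic loop'' $\beta$ of $N$-element subsets of $\RP^1$ that cyclically rotates the $N$ roots (at each time the configuration is a set of $N$ distinct points of $\RP^1$, one possibly at $\infty$). Hence $\mathrm{ev}^N$ is the monodromy around the concatenation $\beta^N$, and it suffices to show that the monodromy of $\Wr$ around $\beta^N$ is trivial: this gives $\mathrm{ev}^N = \mathrm{id}$, so the evacuation $\ZZ$-action on $\ordSYT(\Rect)$ has order $N$.

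The key point is that $\beta^N$ is, up to homotopy in the space of $N$-element subsets of $\RP^1$, the loop
$$\bolda_t := \phi_t(\bolda_0), \qquad \phi_t := \left(\begin{smallmatrix}\cos \pi t & \sin \pi t \\ -\sin \pi t & \cos \pi t\end{smallmatrix}\right) \in \SL_2(\RR), \quad t \in [0,1],$$
which rigidly rotates $\bolda_0$ once around $\RP^1$; note $\phi_0$ is the identity and $\phi_1 = \left(\begin{smallmatrix}-1 & 0 \\ 0 & -1\end{smallmatrix}\right)$. Indeed $\beta$ sends each root to the spot vacated by its cyclic neighbour, so $\beta^N$ carries every root once around $\RP^1$ and back to its start, which up to homotopy is exactly a rigid full rotation. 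Since the relevant fundamental group is infinite cyclic, generated by $[\beta]$ (Remark~\ref{rmk:evacuation}), it only remains to check $[\bolda_t] = [\beta]^{\pm N}$, which follows from a signed count of crossings with the locus of configurations having a point at $\infty$: along $\beta$ there is one such crossing (the single root that passes through $\infty$), while along $\bolda_t$ there are $N$, all of the same sign because each of the $N$ roots passes through $\infty$ once and $\phi_t$ rotates monotonically; and the complementary locus (all points finite and distinct) is simply connected, as noted in the proof of Corollary~\ref{cor:nomonodromy}.

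The monodromy around $\bolda_t$ is trivial by $\SL_2(\RR)$-equivariance. By Proposition~\ref{prop:sl2equivariant}, $X(\bolda_t) = X(\phi_t(\bolda_0)) = \phi_t(X(\bolda_0))$, so every fibre along the path is the image of the reduced fibre $X(\bolda_0)$ under the isomorphism $\phi_t \colon X \to X$, hence reduced, and the lift of $\bolda_t$ through a point $x_0 \in X(\bolda_0)$ is forced to be $x_t = \phi_t(x_0)$. Now $\phi_1 = \left(\begin{smallmatrix}-1 & 0 \\ 0 & -1\end{smallmatrix}\right)$ acts on $\poln$ by the scalar $(-1)^{n-1}$, hence trivially on $X = \Gr_d(\poln)$; therefore $x_1 = \phi_1(x_0) = x_0$. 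Thus the monodromy around $\bolda_t$, and so around $\beta^{\pm N}$, is the identity, and since $\mathrm{ev}$ generates the monodromy group we conclude $\mathrm{ev}^N = \mathrm{id}$.

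The only step requiring care is the homotopy identification in the second paragraph, i.e. showing that a rigid full rotation of the configuration is exactly $N$ basic slides (equivalently, pinning down the winding number $\pm N$); the sketched crossing count, or a direct computation that the space of $N$-element subsets of $\RP^1$ has fundamental group $\ZZ$ generated by $\beta$, makes this precise. (Alternatively, one may choose $\bolda_0$ symmetric enough that $\beta$ itself is the restriction of $\phi_t(\bolda_0)$ to $[0,1/N]$, avoiding the homotopy argument altogether.) Everything else is immediate from Proposition~\ref{prop:sl2equivariant}, the fact that $\left(\begin{smallmatrix}-1 & 0 \\ 0 & -1\end{smallmatrix}\right)$ scales $\poln$, and the homotopy-invariance of monodromy over the reduced locus already used for Corollary~\ref{cor:nomonodromy}.
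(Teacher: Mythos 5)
Your proof is correct, and the central mechanism is the same as the paper's: relate evacuation to a full rotation of $\RP^1$ by a one-parameter subgroup of $\SL_2(\RR)$, then use $\SL_2(\RR)$-equivariance (Proposition~\ref{prop:sl2equivariant}) and the observation that $\left(\begin{smallmatrix}-1&0\\0&-1\end{smallmatrix}\right)$ scales $\poln$ and hence acts trivially on $X$. The one place your route differs is in how you identify the evacuation loop with a rotation: you start from an arbitrary $\bolda_0$, and homotope $\beta^N$ to a rigid full rotation $\phi_t(\bolda_0)$ by a winding-number/crossing-count argument in the space of $N$-element subsets of $\RP^1$ (using that $\pi_1\cong\ZZ$ is generated by $[\beta]$, as in Remark~\ref{rmk:evacuation}). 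The paper sidesteps that homotopy step by choosing a rotationally symmetric base configuration $\bolda_0 = \{\psi e^{j\xi}(0)\}$ for a cyclic-subgroup generator $\xi$ and $\psi\in\SL_2(\RR)$, so that the basic evacuation loop is \emph{literally} the restriction of the rotation, one step of evacuation is $T\mapsto\phi(T)$ with $\phi=\psi e^\xi\psi^{-1}$, and $\phi^N$ being (projectively) the identity gives the result directly. This is precisely the shortcut you mention in your closing parenthetical, so you have in effect rediscovered the paper's proof together with a slightly longer but valid variant; the paper's choice is leaner because it avoids invoking the fundamental group of the configuration space, while yours makes that topology explicit.
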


\begin{proof}
Let $\xi = \frac{2\pi}{N}
\left(\begin{smallmatrix} 0 & 1 \\ -1 & 0 \end{smallmatrix}\right)$,
and
note that $e^{N\xi} = \smallidmatrix$.
Consider the loop $\bolda_t = \{(a_1)_t, \dots , (a_N)_t\}$, where
$(a_j)_t = \psi e^{(j-t) \xi}(0)$ and
$\psi \in \SL_2(\RR)$ is chosen so that
$0 < (a_1)_0 < \dots < (a_N)_0$.
Let $\phi = \psi e^\xi \psi^{-1} \in \SL_2(\RR)$.  
If $T \in \SYT(\Rect; \bolda_0)$,
then $\phi(T)$ is obtained by sliding $T$ using $\bolda_t$.  
But since $\bolda_t$ is a loop which cyclically rotates the elements
of $\bolda_0$, sliding using $\bolda_t$ gives the evacuation action 
on $T$.
The result follows, since $\phi^N = \psi (e^\xi)^N \psi^{-1} = \smallidmatrix$.
\end{proof}

\subsection{Proof of the Littlewood-Richardson rule}
\label{sec:lrrule}

Fix partitions $\lambda, \mu, \nu$, with $|\lambda| = |\mu| + |\nu|$.
Let $\bolda = \{a_1, \dots, a_N\}$, with  $0 < a_1 < \dots < a_N$.  
Let $\boldb_1 = \{a_1, \dots, a_{|\mu|}\}$,
$\boldb_2 = \{a_{|\mu|+1}, \dots , a_{|\lambda|}\}$ and 
$\boldb_3 = \{a_{|\lambda|+1}, \dots , a_N\}$.
Then $(\boldb_1, \boldb_2, \boldb_3)$ is a consecutive partition of 
$\bolda$.  Let $c_1, c_2, c_3$ be internal points of $\boldb_1,
\boldb_2, \boldb_3$ respectively.

We wish to count the number of points (with multiplicities)
in the intersection
$$Y = X_\mu(c_1) \cap X_\nu(c_2) \cap X_{\lambda^\vee}(c_3)\,.$$
This number is the Littlewood-Richardson number $c^\lambda_{\mu\nu}$.
First note that if a point $y \in Y$ has multiplicity $m$, then by
Corollary~\ref{cor:svrootsmultiplicities}, the point
$y$ has multiplicity 
$|\ordSYT(\mu)|\cdot|\ordSYT(\nu)|\cdot|\ordSYT(\lambda^\vee)|\cdot m$
in the intersection
$$\hat Y = X(c_1^{(|\mu|)}) \cap 
X(c_2^{(|\nu|)}) \cap 
X(c_3^{(|\lambda^\vee|)})\,.$$
Thus 
$c^\lambda_{\mu\nu} \cdot
|\ordSYT(\mu)|\cdot|\ordSYT(\nu)|\cdot|\ordSYT(\lambda^\vee)|$
is the number of points in $\hat Y$ 
that are supported on $Y$ (counted with multiplicities).

Since $\Wr$ is flat, the number of points in $\hat Y$ supported on $Y$
is the number of points
$x \in X(\bolda)$ such that 
$x_{[\boldb_1\to c_1][\boldb_2 \to c_2][\boldb_3 \to c_3]} \in Y$.
For a tableau $T \in \SYT(\Rect;\bolda)$,
let $x_T$ be the corresponding point in $X(\bolda)$.  
Then by Theorem~\ref{thm:dualequivalence},
$(x_T)_{[\boldb_1\to c_1][\boldb_2 \to c_2][\boldb_3 \to c_3]} \in Y$
if and only if the rectification shapes of 
$T|_{\boldb_1}$, $T|_{\boldb_2}$, $T|_{\boldb_3}$ are 
$\mu$, $\nu$ and $\lambda^\vee$ respectively.  
Let $S_{\mu\nu}^\lambda$ be the set of all  tableaux with this property.
The number of points in $\hat Y$ supported on $Y$ is 
therefore $|S_{\mu\nu}^\lambda|$.

Note that if $T \in S_{\mu\nu}^\lambda$, then
$T|_{\boldb_1}$ has shape $\mu$, and $T|_{\boldb_3}$ has shape 
$\lambda^c$.
Define an equivalence relation on $S_{\mu\nu}^\lambda$ by
putting $T \sim_2^* T'$ if $T|_{\boldb_2} \sim^* T'|_{\boldb_2}$.
But by Corollary~\ref{cor:dualequivsizes}, each equivalence class 
of $\sim_2^*$ has 
size 
$|[T]| = |\ordSYT(\mu)|\cdot|\ordSYT(\nu)|\cdot|\ordSYT(\lambda^\vee)|$.

Putting everything together, we have
\begin{multline*}
c^\lambda_{\mu\nu} \cdot
|\ordSYT(\mu)|\cdot|\ordSYT(\nu)|\cdot|\ordSYT(\lambda^\vee)|
=
|S_{\mu\nu}^\lambda| 
= \sum_{[T] \in (S_{\mu\nu}^\lambda/\sim_2^*)} |[T]|  \\
=
|\ordSYT(\mu)|\cdot|\ordSYT(\nu)|\cdot|\ordSYT(\lambda^\vee)|\cdot 
|S_{\mu\nu}^\lambda/\sim_2^*|\,.
\end{multline*}
Hence $c^\lambda_{\mu\nu} = |S_{\mu\nu}^\lambda/{\sim_2^*}|$, which is
precisely the statement of the Littlewood-Richardson rule,
as formulated in Theorem~\ref{thm:lrrule}.
\qed



\begin{thebibliography}{99}

\bibitem{BSS}
G. Benkart, F. Sottile and J. Stroomer,
\emph{Tableau Switching: Algorithms and Applications},
J. Comb Theory A. \textbf{76} (1996), no.~1, 11--43.

\bibitem{Cas} G. Castelnuovo, 
\emph{Numero delle involuzioni razionali gaicenti sopra una curva 
di dato genere}, Rendi. R. Accad. Lineci, \textbf{4} (1889), 130--133.

\bibitem{Cos} I. Coskun, 
\emph{A Littlewood-Richardson rule for two-step flag manifolds},
preprint, {\tt http://www-math.mit.edu/\~{}coskun/}.  

\bibitem{EG} A. Eremenko, A. Gabrielov,
\emph{Degrees of real Wronski maps},
Disc. Comp. Geom., \textbf{28} (2002), 331--347.

\bibitem{Eis} D. Eisenbud,
\emph{Commutative algebra with a view toward algebraic geometry},
Graduate Texts in Math. 150,
Springer-Verlag, 1995.

\bibitem{EH} D. Eisenbud and J. Harris,
\emph{Divisors on general curves and cuspidal rational curves},
Invent. Math., \textbf{74} (183), 371--418.

\bibitem{Ful} W. Fulton,
\emph{Young tableaux with applications to representation theory and geometry},
Cambridge U.P., New York, 1997.

\bibitem{GHY} I. Gordon, E. Horozov and M. Yakimov,
\emph{The real loci of Calogero-Moser spaces, representations of 
rational Cherednik algebras and the Shapiro conjecture},
preprint, {\tt arXiv:0711.4336}.

\bibitem{Hai} M. Haiman,
\emph{Dual equivalence with applications, including a conjecture
of Proctor}, Discrete Math. \textbf{99} (1992), 79--113.

\bibitem{Har} R. Hartshorne,
\emph{Algebraic Geometry}, Graduate Texts in Math. 52, 
Springer-Verlag, 1977.

\bibitem{HP} W.V.D. Hodge and D. Pedoe,
\emph{Methods of algebraic geometry, Vol. II},
Cambridge, U.P., 1952.

\bibitem{MS} E. Miller and B. Sturmfels,
\emph{Combinatorial commutative algebra}, Graduate Texts in Math. 227,
Springer-Verlag, 2005.
  
\bibitem{MTV1} 
E. Mukhin, V. Tarasov and A. Varchenko,
\emph{The B. and M. Shapiro conjecture in real algebraic geometry
and the Bethe Ansatz}, to appear in Ann. Math.

\bibitem{MTV2}
E. Mukhin, V. Tarasov and A. Varchenko,
\emph{Schubert calculus and representations of general linear group},
preprint, {\tt arXiv:0711.4079}.

\bibitem{Schubert} H. Schubert, 
\emph{Anzahl-Bestimmungen f\"ur lineare R\"aume beliebiger Dimension}, 
Acta. Math. 8 (1886), 97--118.

\bibitem{Sch}
M.-P.~Sch\"utzenberger,
\emph{La correspondance de Robinson}, in
Combinatoire et repr\'esentation du groupe sym\'etrique,
Lecture Notes in Math., \textbf{579} (1977), Springer-Verlag,
59--113.

\bibitem{Sot1}
F.~Sottile,
\emph{The special Schubert calculus is real},
ERA of the AMS, \textbf{5} (1999), 35--39.

\bibitem{Sot2}
F.~Sottile,
\emph{Real Schubert Calculus: Polynomial systems and a conjecture 
of Shapiro and Shapiro}, Experiment. Math., \textbf{9} (2000), no. 2, 
161--182.

\bibitem{SS} D. Speyer and B. Sturmfels.
\emph{The tropical Grassmannian,}
Advances in Geometry, {\bf  4} (2004), no. 3, 389--411

\bibitem{Vak} R. Vakil,
\emph{A geometric {L}ittlewood-{R}ichardson rule},
(with an appendix by with A. Knutson), Ann. Math. 
\textbf{164} (2006), 371--422.

\end{thebibliography}
\end{document}